\theoremstyle{definition}
\newtheorem{thm}{Theorem}[subsection]
\newtheorem*{thm*}{Theorem}
\newtheorem{defi}[thm]{Definition}
\newtheorem*{defi*}{Definition}
\newtheorem*{acknowledge}{Acknowledgement}
\newtheorem{cor}[thm]{Corollary}
\newtheorem*{cor*}{Corollary}
\newtheorem{prop}[thm]{Proposition}
\newtheorem*{prop*}{Proposition}
\newtheorem{lem}[thm]{Lemma}
\newtheorem*{lem*}{Lemma}
\newtheorem{ex}[thm]{Example}
\newtheorem*{ex*}{Example}
\newtheorem{rem}[thm]{Remark}
\newtheorem*{rem*}{Remark}
\newtheorem*{hw*}{Homework}
\newcommand{\C}{\mathbb{C}}
\newcommand{\Z}{\mathbb{Z}}
\newcommand{\N}{\mathbb{N}}
\newcommand{\T}{\mathbb{T}}
\DeclareMathOperator{\supp}{supp}
\newcommand{\Bis}{\mathrm{Bis}}
\DeclareMathOperator{\Iso}{\mathrm{Iso}}
\DeclareMathOperator{\dom}{\mathrm{dom}}
\DeclareMathOperator{\ab}{\mathrm{ab}}
\DeclareMathOperator{\id}{\mathrm{id}}
\DeclareMathOperator{\Aut}{\mathrm{Aut}}
\DeclareMathOperator{\FAut}{\mathrm{FAut}}
\def\i<#1>{\langle #1 \rangle}
\def\l<#1>{\left\langle #1 \right\rangle}
\renewenvironment{proof}[1][\proofname]{\par
  \normalfont
  \topsep6\p@\@plus6\p@ \trivlist
  \item[\hskip\labelsep{\bfseries #1}\@addpunct{.}]\ignorespaces
}{%
  \endtrivlist
}
\renewcommand{\proofname}{\sc{Proof}}
\newcommand*{\defeq}{\mathrel{\rlap{%
                     \raisebox{0.3ex}{$\m@th\cdot$}}%
                     \raisebox{-0.3ex}{$\m@th\cdot$}}%
                     =}
\title[]{*-homomorphisms between groupoid C*-algebras}
\author{Fuyuta Komura}
\address{Center for Advanced Intelligence Project, RIKEN,
	3-14-1 Hiyoshi, Kohoku-ku, Yokohama, 223-8522, Japan}
\address{Phone: +81-45-566-1641+42706}
\address{Fax: +81-45-566-1642}
\email{fuyuta.k@keio.jp}
\subjclass[2010]{20M18, 22A22, 46L05}
\begin{document}
\maketitle
\begin{abstract}
	
	In this paper, we investigate *-homomorphisms between C*-algebras associated to \'etale groupoids.
	First, we prove that such a *-homomorphism can be described by closed invariant subsets, groupoid homomorphisms and cocycles under some assumptions.
	Then we prove C*-rigidity results for \'etale groupoids which are not necessarily effective.
	As another application, we investigate certain subgroups of the automorphism groups of groupoid C*-algebras.
	More precisely,
	we show that the groups of automorphisms that globally preserve the function algebras on the unit spaces are isomorphic to certain semidirect product groups.
	As a corollary,
	we show that, if group actions on groupoid C*-algebras fix the function algebras on the unit spaces,
	then the actions factors through the abelianizations of the acting groups.

\end{abstract}

\section{Introduction}

The main subject in the present paper is C*-algebras associated with \'etale groupoids,
that is, groupoid C*-algebras.
The theory of groupoid C*-algebras is initiated by Renault in \cite{renault1980groupoid}.
It is known that many C*-algebras are realized as groupoid C*-algebras (see \cite{asims}, for example).
It is a natural task to characterize properties of groupoid C*-algebras in terms of \'etale groupoids.
For instance,
see \cite{anantharaman2000} for the relation between nuclearity of groupoid C*-algebras and amenability of topological groupoids.
In addition, simplicity of full groupoid C*-algebras is investigated in \cite{Brown2014}.
Recently, the authors in \cite{BrownExelFuller2021} established the Galois correspondence result between \'etale groupoids and twisted groupoid C*-algebras.
In \cite{Komura2022},
the author studied certain submodules in groupoid C*-algebras and analyse discrete group coactions on groupoid C*-algebras.
In the present paper,
we investigate *-homomorphisms between groupoid C*-algebras.
First of all,
we explain our motivation to study *-homomorphisms between groupoid C*-algebras.

For an \'etale groupoid $G$,
one can construct a (reduced) groupoid C*-algebra $C^*_r(G)$ and a commutative C*-subalgebra $C_0(G^{(0)})\subset C^*_r(G)$.
Renault proved a C*-rigidity result in \cite{renault}.
Namely,
for effective \'etale groupoids $G_1$ and $G_2$,
Renault proved that $G_1$ and $G_2$ are isomorphic as \'etale groupoids if inclusions of C*-algebras $C_0(G_1^{(0)})\subset C^*_r(G_1)$ and $C_0(G_2^{(0)})\subset C^*_r(G_2)$ are isomorphic.
This result connects the bridge between \'etale groupoids and C*-algebras and has various applications.
Indeed,
one can deduce a C*-rigidity result for C*-algebras associated with dynamical systems from Renault's result (see \cite{MatsumotoMatsui}, for example).
Now, assume that inclusions $C_0(G_1^{(0)})\subset C^*_r(G_1)$ and $C_0(G^{(0)}_2)\subset C^*_r(G_2)$ are isomorphic.
One may wonder how many isomorphisms between $C_0(G_1^{(0)})\subset C^*_r(G_1)$ and $C_0(G^{(0)}_2)\subset C^*_r(G_2)$ exist.
To solve this problem,
it is sufficient to determine the following group
\[\Aut_{C_0(G_1^{(0)})}(C^*_r(G_1))\defeq \{\varphi\in \Aut(C^*_r(G_1))\mid \varphi(C_0(G_1^{(0)}))=C_0(G_1^{(0)})\}.\]
Therefore, 
we are motivated to investigate $\Aut_{C_0(G^{(0)})}(C^*_r(G))$ for an effective \'etale groupoid $G$.
In Corollary \ref{cor: AutG is a semidirect product},
we prove that $\Aut_{C_0(G^{(0)})}(C^*_r(G))$ is isomorphic to the semidirect group of $\Aut (G)$ and $Z(G,\T)$,
where $Z(G,\T)$ is the abelian group of $\T$-valued $1$-cocycles on $G$.
We remark that the almost same result is obtained in \cite[Proposition 5.7 (1)]{Matui2011} and we obtain a slightly more direct proof by using our main theorem, that is, Theorem \ref{thm summary of main theorem }.
In addition, we shall remark that the similar result has been already obtained for von Neumann algebras arising from equivalence relations in \cite[Theorem 3]{FeldmanMoore}.
In the present paper,
we deal with an analogue of \cite[Theorem 3]{FeldmanMoore} for groupoid C*-algebras.

As stated above,
our purpose in this paper is to study *-homomorphisms between groupoid C*-algebras such as elements in $\Aut_{C_0(G^{(0)})}(C^*_r(G))$.
For example, a similar attempt succeeded in \cite[Theorem 6.3]{DonsigPitts2008} for isomorphisms between regular C*-inclusions in terms of coordinate systems.
In this paper,
we begin with the study of general *-homomorphisms between groupoid C*-algebras which need not to be isomorphisms.
Our main theorem is Theorem \ref{thm summary of main theorem },
which asserts that a *-homomorphism $\varphi\colon C^*_r(G)\to C^*_r(H)$ can be described in terms of underlying \'etale groupoids $G$ and $H$ under assumptions that $H$ is effective and $\varphi$ has some compatibility with $C_0(G^{(0)})$ and $C_0(H^{(0)})$.
Taking into account that previous works like \cite{renault} relies on the effectiveness of underlying \'etale groupoids,
it seems noteworthy that we do not assume the effectiveness of $G$.
As a direct application of Theorem \ref{thm summary of main theorem },
we prove that surjective *-homomorphisms between groupoid C*-algebras induce quotients of \'etale groupoids (Corollary \ref{cor : surjective *-hom induces a quotient of etale groupoid}).
Corollary \ref{cor : surjective *-hom induces a quotient of etale groupoid} generalizes Renault's result in \cite{renault},
which asserts that *-isomorphisms between groupoid C*-algebras which preserve the Cartan subalgebras induce isomorphisms between \'etale groupoids.
From Corollary \ref{cor : surjective *-hom induces a quotient of etale groupoid},
we obtain the following variant of the rigidity results for not necessarily effective \'etale groupoids :
for \'etale (not necessarily effective) groupoids $G$ which have closed $\Iso(G)^{\circ}$ and some amenability condition,
the quotient groupoids $G/\Iso(G)^{\circ}$ are invariants for the inclusion of C*-algebras $(C^*_r(G), C_0(G^{(0)}))$ (Corollary \ref{cor generalized rigidity result}).
In other words,
$(C^*_r(G), C_0(G^{(0)}))$ remembers $G/\Iso(G)^{\circ}$ even if an \'etale groupoid $G$ is not effective.

As another application of Theorem \ref{thm summary of main theorem },
we prove the structure theorem of $\Aut_{C_0(G^{(0)})}(C^*_r(G))$ (Corollary \ref{cor: AutG is a semidirect product}).
More precisely,
we show that $\Aut_{C_0(G^{(0)})}(C^*_r(G))$ is isomorphic to the semidirect product of $\Aut(G)$ and $Z(G,\T)$.
In particular,
it turns out that $Z(G,\T)$ corresponds to a certain abelian subgroup of $\Aut_{C_0(G^{(0)})}(C^*_r(G))$. 
In Corollary \ref{cor Z(G,T) is isomorphic to FAUT},
we prove that $Z(G,\T)$ corresponds to
\[\FAut_{C_0(G^{(0)})}(C^*_r(G))\defeq\{\varphi\in \Aut(C^*_r(G))\mid \text{$\varphi(a)=a$ for all $a\in C_0(G^{(0)})$}\}. \]
As a corollary,
we show that a group action on a groupoid C*-algebra factors through its abelianization if the fixed point subalgebra contains $C_0(G^{(0)})$ (Corollary \ref{cor action with large fixed point algebra factors abelianization}).

This paper is organized as follows.
In Section 1,
we recall fundamental facts about \'etale groupoids,
groupoid C*-algebras and inverse semigroups.
In Section 2, we prove our main theorems about *-homomorphisms between groupoid C*-algebras.
Our goal in the first subsection, Subsection \ref{subsection *-homomorphism between groupoid C*-algebras}, is Theorem \ref{thm summary of main theorem }.
Toward Theorem \ref{thm summary of main theorem },
we first prove that *-homomorphisms between groupoid C*-algebras induce groupoid homomorphisms and $\T$-valued $1$ cocycles (Lemma \ref{lem sigma is an equivalent map} and Lemma \ref{lem definition of cocyle}).
Then we prove that every *-homomorphisms can be described by closed invariant subsets,
groupoid homomorphisms and $\T$-valued $1$-cocycles (Proposition \ref{prop varphiU =varphic, Phi,U} and Proposition \ref{prop decompotision of varphi}).
As a special case of Theorem \ref{thm summary of main theorem },
we observe that surjective *-homomorphisms induce quotients of \'etale groupoids in Corollary \ref{cor : surjective *-hom induces a quotient of etale groupoid}.
This immediately implies a variant of rigidity result for not necessarily effective groupoids (Corollary \ref{cor generalized rigidity result}).
In Subsection \ref{subsection automorphism groups of C*r(G)},
we investigate $\Aut_{C_0(G^{(0)})}(C^*_r(G))$ for an effective \'etale groupoid $G$.
First, we prove that $\Aut_{C_0(G^{(0)})}(C^*_r(G))$ is isomorphic to the natural semidirect product of $\Aut(G)$ and $Z(G,\T)$ (Corollary \ref{cor: AutG is a semidirect product}).
Then we observe that $Z(G,\T)$ corresponds to $\FAut_{C_0(G^{(0)})}(C^*_r(G))$ (Corollary \ref{cor Z(G,T) is isomorphic to FAUT}).
As a by-product,
we show that a group action on a groupoid C*-algebra factors through its abelianization if the fixed point algebra contains $C_0(G^{(0)})$ (Corollary \ref{cor action with large fixed point algebra factors abelianization}).

\begin{acknowledge}
	The author would like to thank Prof.\ Takeshi Katsura for his support and encouragement.
	The author is also grateful to Yuki Arano for fruitful discussions.
	This work was supported by JST CREST Grant Number JPMJCR1913 and RIKEN Special Postdoctoral Researcher Program.
\end{acknowledge}

\section{Preliminaries}

In this section,
we recall fundamental notions about \'etale groupoids,
groupoid C*-algebras and inverse semigroups.

\subsection{\'Etale groupoids}

We recall the basic notions on \'etale groupoids.
See \cite{asims} and \cite{paterson2012groupoids} for more details.

A groupoid is a set $G$ together with a distinguished subset $G^{(0)}\subset G$,
domain and range maps $d,r\colon G\to G^{(0)}$ and a multiplication 
\[
G^{(2)}\defeq \{(\alpha,\beta)\in G\times G\mid d(\alpha)=r(\beta)\}\ni (\alpha,\beta)\mapsto \alpha\beta \in G
\]
such that
\begin{enumerate}
	\item for all $x\in G^{(0)}$, $d(x)=x$ and $r(x)=x$ hold,
	\item for all $\alpha\in G$, $\alpha d(\alpha)=r(\alpha)\alpha=\alpha$ holds,
	\item for all $(\alpha,\beta)\in G^{(2)}$, $d(\alpha\beta)=d(\beta)$ and $r(\alpha\beta)=r(\alpha)$ hold,
	\item if $(\alpha,\beta),(\beta,\gamma)\in G^{(2)}$,
	we have $(\alpha\beta)\gamma=\alpha(\beta\gamma)$,
	\item\label{inverse} every $\gamma \in G$,
	there exists $\gamma'\in G$ which satisfies $(\gamma',\gamma), (\gamma,\gamma')\in G^{(2)}$ and $d(\gamma)=\gamma'\gamma$ and $r(\gamma)=\gamma\gamma'$.   
\end{enumerate}
Since the element $\gamma'$ in (\ref{inverse}) is uniquely determined by $\gamma$,
$\gamma'$ is called the inverse of $\gamma$ and denoted by $\gamma^{-1}$.
We call $G^{(0)}$ the unit space of $G$.
A subgroupoid of $G$ is a subset of $G$ which is closed under the inversion and multiplication. 
For $U\subset G^{(0)}$, we define $G_U\defeq d^{-1}(U)$ and $G^{U}\defeq r^{-1}(U)$.
We define also $G_x\defeq G_{\{x\}}$ and $G^x\defeq G^{\{x\}}$ for $x\in G^{(0)}$.
A subset $F\subset G^{(0)}$ is said to be invariant if $d(\alpha)\in F$ implies $r(\alpha)\in F$ for all $\alpha\in G$.
If $F\subset G^{(0)}$ is invariant,
$G_F\subset G$ is a subgroupoid and the unit space of $G_F$ is $F$.

A topological groupoid is a groupoid equipped with a topology where the multiplication and the inverse are continuous.
A topological groupoid is said to be \'etale if the domain map is a local homeomorphism.
Note that the range map of an \'etale groupoid is also a local homeomorphism.
In this paper,
although there exist important \'etale groupoids that are not Hausdorff,
we assume that \'etale groupoids are always locally compact Hausdorff unless otherwise stated.
Hence, we mean locally compact Hausdorff \'etale groupoids by \'etale groupoids.

A subset $U$ of an \'etale groupoid $G$ is called a bisection if the restrictions $d|_U$ and $r|_U$ is injective.
If follows that $d|_U$ and $r|_U$ are homeomorphism onto their images if $U$ is a bisection since $d$ and $r$ are open maps.

An \'etale groupoid $G$ is said to be effective if $G^{(0)}$ coincides with the interior of $\Iso(G)$,
where 
\[
\Iso(G)\defeq\{\alpha\in G\colon d(\alpha)=r(\alpha)\}
\]
is the isotropy of $G$.
An \'etale groupoid $G$ is said to be topologically principal if 
\[
\{x\in G^{(0)}\mid G_x\cap G^x=\{x\}\}
\]
is dense in $G^{(0)}$.
If $G$ is topologically principal,
then $G$ is effective.
If $G$ is second countable and effective,
then $G$ is topologically principal (see \cite[Proposition 3.6]{renault}).

A groupoid homomorphism $c\colon G\to \T$ is called a $\T$-valued $1$-cocycle,
where $\T$ denotes the circle group.
Because we only consider a $\T$-valued $1$-cocycle in this paper,
we often simply call it a cocycle.
We let $Z(G,\T)$ denote the set of all continuous cocycles $c\colon G\to \T$.
Then $Z(G,\T)$ is an abelian group with respect to the pointwise product.

\subsection{Groupoid C*-algebras}

We recall the definition of groupoid C*-algebras.

Let $G$ be an \'etale groupoid.
Then $C_c(G)$, the vector space of compactly supported continuous $\C$-valued functions on $G$, is a *-algebra with respect to the multiplication and the involution defined by
\[
f*g(\gamma)\defeq\sum_{\alpha\beta=\gamma}f(\alpha)g(\beta), f^*(\gamma)\defeq\overline{f(\gamma^{-1})},
\]
where $f,g\in C_c(G)$ and $\gamma\in G$.
The left regular representation $\lambda_x\colon C_c(G)\to B(\ell^2(G_x))$ at $x\in G^{(0)}$ is defined by
\[
\lambda_x(f)\delta_{\alpha}\defeq \sum_{\beta\in G_{r(\alpha)}}f(\alpha)\delta_{\alpha\beta},
\]
where $f\in C_c(G)$ and $\alpha\in G_x$.
The reduced norm $\lVert\cdot\rVert_{r}$ on $C_c(G)$ is defined by
\[
\lVert f\rVert_r\defeq \sup_{x\in G^{(0)}} \lVert \lambda_x(f)\rVert
\]
for $f\in C_c(G)$.
We often omit the subscript `$r$' of $\lVert\cdot\rVert_r$ if there is no chance to confuse.
The reduced groupoid C*-algebra $C^*_r(G)$ is defined to be the completion of $C_c(G)$ with respect to the reduced norm.
Note that $C_c(G^{(0)})\subset C_c(G)$ is a *-subalgebra and this inclusion extends to the inclusion  $C_0(G^{(0)})\subset C^*_r(G)$.

For a closed invariant subset $F\subset G^{(0)}$,
the closed subgroupoid $G_F\subset G$ is \'etale with respect to the relative topology.
It is well-known that the restriction
\[
C_c(G)\ni f\mapsto f|_{G_F}\in C_c(G_F)
\]
extends to the surjective *-homomorphism $C^*_r(G)\to C^*_r(G_F)$.
In addition,
the reduced groupoid C*-algebra $C^*_r(G)$ can be embedded into $C_0(G)$ as in the following.
See \cite[Proposition 9.3.3]{asims} for the proof.

\begin{prop}[Evaluation] \label{prop evaluation}
	Let $G$ be an \'etale groupoid.
	For $a\in C^*_r(G)$,
	$j(a)\in C_0(G)$ is defined by
	\[
	j(a)(\alpha)\defeq\i<\delta_{\alpha}|\lambda_{d(\alpha)}(a)\delta_{d(\alpha)}>
	\]
	for $\alpha\in G$\footnote{In this paper, inner products of Hilbert spaces are linear with respect to the right variables.}.
	Then $j\colon C^*_r(G)\to C_0(G)$ is a norm decreasing injective linear map.
	Moreover, $j$ is an identity map on $C_c(G)$.
	
\end{prop}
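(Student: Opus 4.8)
The plan is to establish the four asserted properties — linearity, the norm estimate, the identity $j|_{C_c(G)}=\id$, and injectivity — in an order that lets the earlier ones feed the later ones. Linearity is immediate, since each $\lambda_x$ is linear and the inner product is linear in its right variable. For the norm estimate I would apply Cauchy--Schwarz together with the defining supremum of the reduced norm: since $\delta_\alpha$ and $\delta_{d(\alpha)}$ are unit basis vectors,
\[
\lvert j(a)(\alpha)\rvert=\lvert\i<\delta_{\alpha}|\lambda_{d(\alpha)}(a)\delta_{d(\alpha)}>\rvert
\le \lVert\lambda_{d(\alpha)}(a)\rVert\le\lVert a\rVert_r,
\]
so $\sup_{\alpha\in G}\lvert j(a)(\alpha)\rvert\le\lVert a\rVert_r$. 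At this stage $j(a)$ is only known to be a bounded function on $G$; promoting this to membership in $C_0(G)$ comes later.

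Next I would compute $j$ on $C_c(G)$ directly. For $f\in C_c(G)$ and $\alpha\in G$, unwinding the definition of $\lambda_{d(\alpha)}$ applied to $\delta_{d(\alpha)}$ (noting that $d(\alpha)$ is a unit, so $r(d(\alpha))=d(\alpha)$ and $\beta\,d(\alpha)=\beta$ for $\beta\in G_{d(\alpha)}$) gives
\[
\lambda_{d(\alpha)}(f)\delta_{d(\alpha)}=\sum_{\beta\in G_{d(\alpha)}}f(\beta)\delta_{\beta},
\]
whence $j(f)(\alpha)=f(\alpha)$ by orthonormality of the $\delta_\beta$. Thus $j|_{C_c(G)}=\id$, and in particular $j(f)\in C_c(G)\subset C_0(G)$. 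Now I can settle the codomain for a general $a\in C^*_r(G)$ by approximation: choosing $f_n\in C_c(G)$ with $\lVert f_n-a\rVert_r\to 0$, the norm estimate yields $\lVert j(f_n)-j(a)\rVert_\infty\le\lVert f_n-a\rVert_r\to 0$; since each $j(f_n)=f_n$ lies in $C_0(G)$ and $C_0(G)$ is uniformly closed, $j(a)\in C_0(G)$, and $j(a)$ is in particular continuous as a uniform limit of continuous functions.

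The substantive point is injectivity, and the key is that the ``diagonal'' data recorded by $j$ in fact determines every matrix coefficient of every $\lambda_x(a)$. Concretely, I would first verify on $C_c(G)$, and then extend by density and the norm estimate to all $a\in C^*_r(G)$, the covariance identity
\[
\i<\delta_{\alpha}|\lambda_{d(\beta)}(a)\delta_{\beta}>=j(a)(\alpha\beta^{-1})
\qquad(\alpha,\beta\in G_{x},\ x=d(\beta)),
\]
where the right-hand side is legitimate because $d(\alpha)=d(\beta)=r(\beta^{-1})$ makes $\alpha\beta^{-1}$ composable and $d(\alpha\beta^{-1})=r(\beta)$. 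Granting this, if $j(a)=0$ then $\i<\delta_{\alpha}|\lambda_x(a)\delta_{\beta}>=0$ for all $x\in G^{(0)}$ and all $\alpha,\beta\in G_x$, so $\lambda_x(a)=0$ for every $x$; by definition of the reduced norm $\lVert a\rVert_r=\sup_{x}\lVert\lambda_x(a)\rVert=0$, i.e.\ $a=0$.

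I expect the only real obstacle to be bookkeeping. The covariance identity is a short computation for $f\in C_c(G)$ — the sum defining $\lambda_{d(\beta)}(f)\delta_\beta=\sum_{\gamma\in G_{r(\beta)}}f(\gamma)\delta_{\gamma\beta}$ collapses, after pairing with $\delta_\alpha$, to the single surviving term $\gamma=\alpha\beta^{-1}$ — but one must track the source/range conditions carefully so that every product written down is genuinely composable. The remaining care is to confirm that both sides are $\lVert\cdot\rVert_r$-to-$\lVert\cdot\rVert_\infty$ continuous in $a$ (the left via the norm-decreasing property of $\lambda_{d(\beta)}$, the right via that of $j$) so that the identity, and hence injectivity, passes from $C_c(G)$ to all of $C^*_r(G)$.
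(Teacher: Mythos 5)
Your proof is correct and complete: linearity, the Cauchy--Schwarz bound $\lvert j(a)(\alpha)\rvert\le\lVert\lambda_{d(\alpha)}(a)\rVert\le\lVert a\rVert_r$, the direct computation showing $j|_{C_c(G)}=\id$ (and hence, by density and uniform closedness of $C_0(G)$, that $j$ lands in $C_0(G)$), and injectivity via the matrix-coefficient identity $\langle\delta_\alpha|\lambda_x(a)\delta_\beta\rangle=j(a)(\alpha\beta^{-1})$ all check out, including the composability bookkeeping. The paper itself offers no proof of this proposition --- it defers to \cite[Proposition 9.3.3]{asims} --- and your argument is essentially the standard one given there, so there is nothing further to reconcile.
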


\begin{rem}
	Since $j\colon C^*_r(G)\to C_0(G^{(0)})$ is injective,
	we may identify $j(a)$ with $a$.
	Hence, we often regard $a$ as a function on $G$ and simply denote $j(a)$ by $a$.
\end{rem}

Finally, we recall facts about normalizers.
	
\begin{defi}
	Let $A$ be a C*-algebra and $D\subset A$ be a C*-subalgebra.
	An element $n\in A$ is called a normalizer for $D$ if $nDn^*\cup n^*Dn\subset D$ holds.
	We denote the set of normalizers for $D$ by $N(A, D)$.
\end{defi}	
	
	For $a\in C^*_r(G)$,
	we denote the open support of $a$ by
	\[
	\supp^{\circ}(a)\defeq \{\alpha\in G\mid a(\alpha)\not=0\}.
	\]
	Note that $\supp^{\circ}(a)$ is open in $G$.
	Normalizers for $C_0(G^{(0)})$ and bisections in $G$ are intimately related as follows.

\begin{prop}[{\cite[Proposition 4.8]{renault}}] \label{prop open support of normaliser is a bisection}
	Let $G$ be an \'etale groupoid and $U\subset G$ be an open set.
	If $U$ is a bisection, then every elements in $C_c(U)$ is a normalizer.
	Moreover, if $n\in C^*_r(G)$ is a normalizer and $G$ is effective,
	then $\supp^{\circ}(n)\subset G$ is an open bisection.
\end{prop}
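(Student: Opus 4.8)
The plan is to treat the two assertions separately, since they point in opposite directions.

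For the first assertion, let $U\subseteq G$ be an open bisection and $n\in C_c(U)$. I would first record the two elementary facts that, for a bisection, $UU^{-1}\subseteq G^{(0)}$ and $U^{-1}U\subseteq G^{(0)}$: if $\alpha,\beta\in U$ satisfy $d(\alpha)=d(\beta)$ then injectivity of $d|_U$ forces $\alpha=\beta$, whence $\alpha\beta^{-1}=r(\alpha)\in G^{(0)}$, and symmetrically for $U^{-1}U$. Now take $f\in C_c(G^{(0)})$. Since $f$ is supported on units, $\supp^{\circ}(n*f)\subseteq \supp^{\circ}(n)\cdot G^{(0)}\subseteq U$, and therefore $\supp^{\circ}(n*f*n^*)\subseteq U\cdot U^{-1}\subseteq G^{(0)}$; as $n*f*n^*$ is continuous with compact support it lies in $C_c(G^{(0)})\subseteq C_0(G^{(0)})$. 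The same computation with $U^{-1}U$ in place of $UU^{-1}$ gives $n^**f*n\in C_0(G^{(0)})$. Since $C_c(G^{(0)})$ is dense in $C_0(G^{(0)})$ and multiplication in $C^*_r(G)$ is norm continuous, it follows that $n\,C_0(G^{(0)})\,n^*\cup n^*\,C_0(G^{(0)})\,n\subseteq C_0(G^{(0)})$, i.e. $n$ is a normalizer.

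For the second assertion I would show that $N:=\supp^{\circ}(n)$ is a bisection by proving that $d|_N$ and $r|_N$ are injective; replacing $n$ by $n^*$ (so that $N$ is replaced by $N^{-1}$) interchanges the roles of $d$ and $r$, so it suffices to treat $d|_N$. The starting point is that a normalizer satisfies $n^*n,\,nn^*\in C_0(G^{(0)})$; this follows by applying the normalizer condition to an approximate unit $(e_i)$, which is legitimate because $C_0(G^{(0)})$ contains an approximate unit for $C^*_r(G)$, so that $n^*e_i n\to n^*n$ with each $n^*e_i n\in C_0(G^{(0)})$. Viewing elements as functions via the evaluation map $j$ of Proposition \ref{prop evaluation}, one checks that $C_0(G^{(0)})$ acts diagonally in each fibre representation, $\lambda_x(h)\delta_\gamma=h(r(\gamma))\delta_\gamma$ for $h\in C_0(G^{(0)})$; consequently $\lambda_x(n)^*\lambda_x(n)$ and $\lambda_x(n)\lambda_x(n)^*$ are diagonal, which is to say that the vectors $\{\lambda_x(n)\delta_\gamma\}_{\gamma\in G_x}$ are pairwise orthogonal in $\ell^2(G_x)$, and likewise for $\lambda_x(n)^*$. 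Since the support of $\lambda_x(n)\delta_x=\sum_{\gamma\in G_x}n(\gamma)\delta_\gamma$ is exactly $N\cap G_x$, proving $d|_N$ injective amounts to showing that this vector is supported at a single point for every $x$.

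The crux, and the only place the hypothesis on $G$ is used, is to pass from these orthogonality relations to genuine single-point support. This cannot follow from orthogonality alone: a convolution matrix with orthogonal rows and columns need not be monomial (the regular representation of a nontrivial isotropy group is a counterexample), which reflects exactly why the statement fails for non-effective $G$. To exclude this I would use effectiveness in the form $\Int(\Iso(G))=G^{(0)}$ together with the fact that $N$ is open and $d,r$ are local homeomorphisms. Concretely, if $d|_N$ failed to be injective one could find distinct $\gamma_1,\gamma_2\in N$ with $d(\gamma_1)=d(\gamma_2)$ and, shrinking to disjoint open bisections $U_i\ni\gamma_i$ inside $N$ with common source $d(U_1)=d(U_2)$, read off the vanishing of the off-diagonal entries of $n^*n$ and $nn^*$ on an open set; localising at points whose fibres have trivial isotropy (dense by topological principality, which follows from effectiveness) shows the persistent ``diagonal'' term cannot be cancelled, forcing an open subset of $\Iso(G)\setminus G^{(0)}$ and contradicting $\Int(\Iso(G))=G^{(0)}$. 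The main obstacle is precisely this step: controlling the possible cancellation among off-diagonal contributions and converting ``two support points in one fibre'' into an open piece of nontrivial isotropy. Equivalently, it is the standard fact that effectiveness makes $C_0(G^{(0)})$ maximal abelian in $C^*_r(G)$, so that conjugation by $n$ implements a partial homeomorphism of $G^{(0)}$ whose graph is a bisection containing $N$.
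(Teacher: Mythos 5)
The paper itself gives no proof of this proposition (it is quoted from \cite[Proposition 4.8]{renault}), so your attempt has to be measured against Renault's argument, which is what your sketch is implicitly reconstructing. Your proof of the first assertion is complete and correct, and is the standard one: for a bisection $U$ one has $UU^{-1}\cup U^{-1}U\subseteq G^{(0)}$, hence $n*f*n^*$ and $n^**f*n$ lie in $C_c(G^{(0)})$ for $n\in C_c(U)$ and $f\in C_c(G^{(0)})$, and density of $C_c(G^{(0)})$ in $C_0(G^{(0)})$ plus norm continuity of multiplication finish.

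The second assertion, however, is not proved, and the gap is twofold. First, you rely on the claim that topological principality ``follows from effectiveness.'' That implication is false in general; it holds only under second countability (the paper states exactly this in its preliminaries), and the proposition is asserted for arbitrary effective \'etale groupoids. Second, and more fundamentally, the step you yourself call ``the main obstacle'' --- converting two support points in one fibre into an open piece of nontrivial isotropy, while controlling cancellation --- is precisely the content of the proposition, and the proposal never carries it out; it only asserts that localising at trivial-isotropy points makes the diagonal term survive. Your own counterexample (the regular representation of a nontrivial isotropy group) shows that the orthogonality relations you extract, which amount to $n^*n,\,nn^*\in C_0(G^{(0)})$, cannot suffice by themselves, so some further input is indispensable and is missing.

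The missing idea is to evaluate conjugates at \emph{units}, where all terms are positive and cancellation is impossible, instead of at non-units. For $x\in G^{(0)}$ with $n^*n(x)>0$, the map $f\mapsto (n^*fn)(x)/(n^*n)(x)$ is a character of $C_0(G^{(0)})$ (multiplicativity follows from $(n^*fn)(n^*gn)=(n^*fgn)(n^*n)$, using $nn^*\in C_0(G^{(0)})$, which you did prove), hence equals evaluation at some point $\alpha_n(x)\in G^{(0)}$. On the other hand, Proposition \ref{prop evaluation} and the diagonal action you observed give
\[
(n^*fn)(x)=\sum_{\gamma\in G_x}\lvert n(\gamma)\rvert^2 f(r(\gamma))=f(\alpha_n(x))\,(n^*n)(x),
\]
and since for $f\ge 0$ every summand is nonnegative, an Urysohn function separating $r(\gamma_0)$ from $\alpha_n(x)$ shows that \emph{every} $\gamma\in\supp^{\circ}(n)$ with $d(\gamma)=x$ satisfies $r(\gamma)=\alpha_n(x)$; no density argument enters. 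Now if $d|_{\supp^{\circ}(n)}$ were not injective, pick distinct $\gamma_1,\gamma_2\in\supp^{\circ}(n)$ with equal sources and disjoint open bisections $U_i\ni\gamma_i$ inside $\supp^{\circ}(n)$ with $d(U_1)=d(U_2)$; by the previous sentence every element of the open set $U_1U_2^{-1}$ has equal source and range, so $U_1U_2^{-1}\subseteq \Int(\Iso(G))=G^{(0)}$, contradicting the fact that it contains the non-unit $\gamma_1\gamma_2^{-1}$. This is where effectiveness (rather than topological principality) genuinely enters; the case of $r$ follows by replacing $n$ with $n^*$, as you noted.
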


	\subsection{Inverse semigroups}
	
	We recall the basic notions about inverse semigroups.
	See \cite{lawson1998inverse} or \cite{paterson2012groupoids} for more details.
	An inverse semigroup $S$ is a semigroup where for every $s\in S$ there exists a unique $s^*\in S$ such that $s=ss^*s$ and $s^*=s^*ss^*$.
	We denote the set of all idempotents in $S$ by $E(S)\defeq\{e\in S\mid e^2=e\}$.
	It is known that $E(S)$ is a commutative subsemigroup of $S$.
	An inverse semigroup which consists of idempotents is called a (meet) semilattice of idempotents.
	A zero element is a unique element $0\in S$ such that $0s=s0=0$ holds for all $s\in S$.
	An inverse semigroup with a unit is called an inverse monoid.
	By a subsemigroup of $S$,
	we mean a subset of $S$ that is closed under the product and inverse of $S$.
	A map $\varphi\colon S\to T$ between inverse semigroups $S$ and $T$ is called a semigroup homomorphism if $\varphi(st)=\varphi(s)\varphi(t)$ holds for all $s,t\in S$.
	Note that a semigroup homomorphism automatically preserves generalised inverses (i.e.\ $\varphi(s^*)=\varphi(s)^*$ holds for all $s\in S$).
	
	For a topological space $X$,
	we denote by $I_X$ the set of all homeomorphisms between open sets in $X$.
	Then $I_X$ is an inverse semigroup with respect to the product defined by the composition of maps.
	For an inverse semigroup $S$,
	an inverse semigroup action $\alpha\colon S\curvearrowright X$ is a semigroup homomorphism $S\ni s\mapsto \alpha_s\in I_X$.
	In this paper, we always assume that every action $\alpha$ satisfies $\bigcup_{e\in E(S)}\dom(\alpha_e)=X$.
	
	\subsection{Inverse semigroups associated to inclusions of C*-algebras}
		
	Following \cite[Proposition 13.3]{noncommutativeCartanExel},
	we associate inverse semigroups of slices to inclusions of C*-algebras.
	
\begin{defi}
	Let $D\subset A$ be an inclusion of C*-algebras.
	A slice  is a norm closed subspace $M\subset A$ such that $DM\cup MD\subset M$ and $M\subset N(A, D)$.
	The set of all slices is denoted by $\mathcal{S}(A,D)$.
\end{defi}

\begin{prop}[{\cite[Proposition 13.3]{noncommutativeCartanExel}}]
	Let $D\subset A$ be an inclusion of C*-algebras.
	Assume that $D$ has an approximate unit for $A$.
	For $M,N\in \mathcal{S}(A,D)$,
	define $MN$ to be the closure of the linear span of
	\[
	\{xy\in A\mid x\in M, y\in N \}.
	\]
	Then $\mathcal{S}(A,B)$ is an inverse semigroup under this operation.
	The generalized inverse of $M\in \mathcal{S}(A, D)$ is $M^*\defeq\{x^*\in A\mid x\in M\}$.
\end{prop}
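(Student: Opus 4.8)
The plan is to verify directly that the operation $(M,N) \mapsto MN$ makes $\mathcal{S}(A,D)$ an inverse semigroup, following the characterization of inverse semigroups as regular semigroups whose idempotents commute. First I would check that $\mathcal{S}(A,D)$ is closed under the product, i.e.\ that $MN \in \mathcal{S}(A,D)$ whenever $M,N \in \mathcal{S}(A,D)$. This amounts to confirming that $MN$ is a norm-closed subspace (true by definition, taking the closed linear span), that $D(MN) \cup (MN)D \subset MN$, and that every element of $MN$ normalizes $D$. The bimodule property follows from $DM \subset M$ and $ND \subset N$ together with associativity of the product in $A$; the normalizer property follows because for $x \in M$ and $y \in N$ one has $(xy)D(xy)^* = x(yDy^*)x^* \subset xDx^* \subset D$, and symmetrically for $(xy)^*D(xy)$, so products of normalizers are normalizers, and this passes to the closed linear span by continuity of multiplication and involution.

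Next I would establish associativity of the product on $\mathcal{S}(A,D)$. Since the product in $A$ is associative, for generators we have $(xy)z = x(yz)$, and taking closed linear spans shows $(MN)P$ and $M(NP)$ have the same dense subspace, hence coincide. I would then verify the involution $M \mapsto M^*$ is well defined (that $M^* \in \mathcal{S}(A,D)$, using that $D^* = D$ and that $n \in N(A,D)$ iff $n^* \in N(A,D)$) and is an anti-multiplicative involution, i.e.\ $(MN)^* = N^* M^*$ and $M^{**} = M$, both of which follow from the corresponding identities $(xy)^* = y^* x^*$ in $A$ at the level of generators.

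The heart of the argument, and where I expect the main obstacle, is showing the regularity axiom $M = M M^* M$ together with the commutativity of idempotents. The nontrivial inclusion is $M \subset M M^* M$: given $x \in M$, one must recover $x$ from products $x (y^* z)$ with $x,y,z \in M$, and this is precisely where the hypothesis that $D$ contains an approximate unit $(u_\lambda)$ for $A$ is essential. The idea is that $x^* x \in D$ (since $x$ is a normalizer, $x^* D x \subset D$ and one checks $x^* x$ lies in the relevant corner), so that $x \cdot u_\lambda \cdot$ (suitable approximation) converges to $x$, and by inserting the approximate unit one writes $x$ as a norm-limit of elements of $M M^* M$; I would need to argue carefully that $x (x^* x)^{1/n}$ or a similar functional-calculus expression lies in $M M^* M$ and converges to $x$. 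This is the only step that genuinely uses the approximate-unit hypothesis and the C*-structure (as opposed to purely algebraic bimodule bookkeeping), so it is the crux.

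Finally, for the idempotent structure I would show that the idempotents of $\mathcal{S}(A,D)$ are exactly the slices $M$ satisfying $M = M^* = M^2$, identify these with hereditary-type subalgebras sitting between corners of $D$, and observe that any two such commute because they are governed by the commutative structure of $D$; uniqueness of the generalized inverse then follows from the general fact that a regular semigroup with commuting idempotents is an inverse semigroup, giving $M^*$ as the unique inverse. In writing this up I would lean on the cited \cite{noncommutativeCartanExel} for the portions that are standard, and concentrate the detailed verification on the regularity step described above.
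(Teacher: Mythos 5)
The paper itself offers no proof of this proposition: it is imported verbatim from Exel's work (the citation in the statement), so your proposal can only be measured against the standard argument, which is Exel's. Your skeleton --- closure of $\mathcal{S}(A,D)$ under the product, associativity, regularity $M=MM^*M$ via the approximate unit, then commutativity of idempotents plus the general fact that regular semigroups with commuting idempotents are inverse semigroups --- is the right one, and you correctly locate where the approximate-unit hypothesis enters (it gives $x^*x\in D$ for a normalizer $x$, since $x^*u_\lambda x\to x^*x$ and $D$ is closed). However, two of your steps have genuine gaps. The first is your claim that every element of $MN$ normalizes $D$ because ``products of normalizers are normalizers, and this passes to the closed linear span by continuity.'' Continuity takes care of the closure but not of the span: a \emph{sum} of normalizers need not be a normalizer. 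For instance, relative to the diagonal $D\subset M_2(\C)$, the matrix units $e_{11}$ and $e_{12}$ are normalizers, but $(e_{11}+e_{12})^*d(e_{11}+e_{12})$ is generically not diagonal, so $e_{11}+e_{12}\notin N(M_2(\C),D)$. Thus for $x=\sum_i m_in_i$ you cannot conclude $xDx^*\subset D$ termwise. The missing ingredient is a polarization lemma for slices: if $M$ is a slice, then $m\,d\,n^*\in D$ for all $m,n\in M$ and $d\in D$, proved by expanding $(m+n)d(m+n)^*$ and $(m+in)d(m+in)^*$, both of which lie in $D$ because $M$ is a \emph{linear subspace} of normalizers. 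With this, $xdx^*=\sum_{i,j}m_i(n_i\,d\,n_j^*)m_j^*\in D$, the cross terms included; the same lemma, combined with the approximate unit, yields $M^*M\cup MM^*\subset D$, which your regularity step silently uses when it asserts $MM^*M\subset M$.

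The second gap is the idempotent step. You propose to identify idempotent slices with ``hereditary-type subalgebras'' and to conclude they commute ``because they are governed by the commutative structure of $D$.'' But in this proposition $D$ is an arbitrary C*-subalgebra --- Exel's whole setting is that of \emph{noncommutative} Cartan subalgebras --- and hereditary subalgebras of a noncommutative algebra do not commute as sets: already in $M_2(\C)$ the corners $pM_2(\C)p$ and $qM_2(\C)q$ for non-commuting projections $p,q$ satisfy $(pM_2(\C)p)(qM_2(\C)q)\neq(qM_2(\C)q)(pM_2(\C)p)$. The correct mechanism is that every idempotent slice $E$ is a closed \emph{two-sided ideal} of $D$, and closed ideals of any C*-algebra, commutative or not, satisfy $IJ=I\cap J=JI$. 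Moreover, the inclusion $E\subset D$ is itself something you must prove, e.g.: from $EE=E$ and associativity, $E^*E=E^*(EE)=(E^*E)E\subset DE\subset E$; then regularity for $E^*$ gives $E^*=(E^*E)E^*\subset EE^*\subset D$, whence $E\subset D$, and the bimodule property makes $E$ an ideal. With these two repairs --- the polarization lemma and the identification of idempotents as ideals of $D$ --- your outline becomes a complete proof along the same lines as the cited one.
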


Let $G$ be an \'etale groupoid and $\mathrm{Bis}(G)$ denotes the set of all open bisections in $G$.
For $U, V\in \mathrm{Bis}(G)$,
their product is defined by
\[
UV\defeq \{\alpha\beta\in G\mid \alpha\in U, \beta\in V, d(\alpha)=r(\beta)\}.
\]
Then $UV\in\mathrm {Bis}(G)$ and $\mathrm{Bis}(G)$ is an inverse semigroup with respect to this product.
Note that $U^*\in \mathrm{Bis}(G)$ is given by
\[
U^{-1}\defeq \{\alpha^{-1}\in G\mid \alpha\in U\}.
\]

For $U\in \Bis(G)$ and $f\in C_c(U)$,
we have $f^*f\in C_0(G^{(0)})$ and 
\[\lVert f\rVert_r^2=\lVert f^*f\rVert_r=\sup_{x\in G^{(0)}}\lvert f^*f(x)\rvert=\sup_{\alpha\in U}\lvert f(\alpha)\rvert^2. \]
Hence the reduced norm coincides with the supremum norm on $C_c(U)$ and we may identify the closure of $C_c(U)$ in $C^*_r(G)$ with $C_0(U)$.
Note that $C_0(U)\subset C^*_r(G)$ is a $C_0(G^{(0)})$-subbimodule.

Now we associate a $C_0(G^{(0)})$-subbimodule $C_0(U)\subset C^*_r(G)$ to a bisection $U\in \Bis(G)$.
This gives a semigroup homomorphism as in the following.

\begin{thm}[{\cite[Proposition 1.4.3., Corollary 2.1.6.]{Komura2022}}] \label{thm homomorphism from Bis to slices is isomorphism}
	Let $G$ be an \'etale groupoid.
	Then the map 
	\[\Psi\colon \mathrm{Bis}(G)\ni U\mapsto C_0(U)\in \mathcal{S}(C^*_r(G), C_0(G^{(0)})) \]
	is an injective semigroup homomorphism.
	If $G$ is effective,
	then $\Psi$ is an isomorphism.
\end{thm}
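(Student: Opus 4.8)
The plan is to verify the four assertions implicit in the statement: that $\Psi$ is well defined (each $C_0(U)$ is a slice), that it is multiplicative, that it is injective, and that it is surjective when $G$ is effective. Well-definedness is almost immediate: $C_0(U)$ is norm closed by construction, it is a $C_0(G^{(0)})$-subbimodule as already recorded before the statement, and by Proposition \ref{prop open support of normaliser is a bisection} every element of $C_c(U)$ is a normalizer; since the normalizer set $N(C^*_r(G), C_0(G^{(0)}))$ is norm closed, passing to the closure gives $C_0(U)\subseteq N(C^*_r(G), C_0(G^{(0)}))$. Hence $C_0(U)\in\mathcal{S}(C^*_r(G), C_0(G^{(0)}))$.

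For multiplicativity I would prove $C_0(UV)=C_0(U)C_0(V)$ by two inclusions. For $f\in C_c(U)$ and $g\in C_c(V)$ one checks $\supp^{\circ}(f*g)\subseteq UV$, so $f*g\in C_c(UV)$; continuity of the product together with closedness of $C_0(UV)$ then gives $C_0(U)C_0(V)\subseteq C_0(UV)$. The reverse inclusion is the crux of this step, and I would obtain it by an explicit factorization rather than an abstract density argument. Given $h\in C_c(UV)$, choose $g\in C_c(V)$ with $0\le g\le 1$ and $g\equiv 1$ on the (compact) image of $\supp(h)$ under the second-coordinate map $UV\to V$. Because $V$ is a bisection, $V^{-1}V\subseteq G^{(0)}$ and $VV^{-1}=r(V)\subseteq G^{(0)}$; consequently $h*g^*\in C_0(U)$ and $g^**g\in C_0(G^{(0)})$ acts as a local unit on $h$, so that $h=(h*g^*)*g\in C_0(U)C_0(V)$. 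Taking closures yields $C_0(UV)\subseteq C_0(U)C_0(V)$ and hence equality, which is exactly $\Psi(UV)=\Psi(U)\Psi(V)$.

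Injectivity I would read off from the evaluation map $j$ of Proposition \ref{prop evaluation}. Identifying elements of $C^*_r(G)$ with functions on $G$, every $a\in C_0(U)$ has $\supp^{\circ}(a)\subseteq U$, while for each $\gamma\in U$ Urysohn's lemma supplies $f\in C_c(U)$ with $f(\gamma)\neq 0$. Therefore $U=\bigcup_{a\in C_0(U)}\supp^{\circ}(a)$ is recovered from $C_0(U)$, so $C_0(U)=C_0(V)$ forces $U=V$.

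The genuinely substantive part is surjectivity under the effectiveness hypothesis, which I expect to be the main obstacle. Given a slice $M$, each $n\in M$ is a normalizer, so Proposition \ref{prop open support of normaliser is a bisection} makes $\supp^{\circ}(n)$ an open bisection; set $U\defeq\bigcup_{n\in M}\supp^{\circ}(n)$. The delicate point is that this union is again a bisection. To see that $d|_U$ is injective, suppose $\gamma_1\neq\gamma_2$ lie in $U$ with $d(\gamma_1)=d(\gamma_2)$, choose $n_1,n_2\in M$ with $n_i(\gamma_i)\neq 0$, and pick $f_1,f_2\in C_c(G^{(0)})$ bumping at $r(\gamma_1),r(\gamma_2)$; then $f_1 n_1+f_2 n_2\in M$ is a normalizer that is nonzero at both $\gamma_1$ and $\gamma_2$, contradicting that its open support is a bisection. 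The same argument on the other side gives injectivity of $r|_U$, so $U\in\Bis(G)$. Finally I would show $M=C_0(U)$: the inclusion $M\subseteq C_0(U)$ is immediate since each $\supp^{\circ}(n)\subseteq U$, and for the reverse I would take $h\in C_c(U)$, cover the compact set $\supp(h)$ by finitely many open sets on which some $n\in M$ is nonvanishing, and use a subordinate partition of unity together with the bisection identification (a function supported in $U$ is of the form $f\cdot n$ for a suitable $f\in C_c(G^{(0)})$) to write $h=\sum_i f_i n_i\in M$; closedness of $M$ then upgrades this to $C_0(U)\subseteq M$. Combining the two inclusions gives $M=C_0(U)=\Psi(U)$, so $\Psi$ is onto and therefore an isomorphism.
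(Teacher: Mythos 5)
The paper itself gives no proof of Theorem \ref{thm homomorphism from Bis to slices is isomorphism}: it is imported from \cite{Komura2022}, so there is no internal argument to compare yours against. Your blind proof follows the standard Renault-style route, and its architecture is sound: well-definedness (using that $N(C^*_r(G),C_0(G^{(0)}))$ is norm closed together with Proposition \ref{prop open support of normaliser is a bisection}), multiplicativity via the explicit factorization $h=(h*g^*)*g$ with $g^**g$ acting as a local unit (this step is complete and correct as written, as is the injectivity step via the evaluation map of Proposition \ref{prop evaluation}), and surjectivity by assembling $U=\bigcup_{n\in M}\supp^{\circ}(n)$ from a slice $M$.

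Two points in the surjectivity step need tightening, though both are fillable from ingredients you already use. First, your argument that $U$ is a bisection can fail as literally stated: when $r(\gamma_1)=r(\gamma_2)$ the bump functions $f_1,f_2$ separate nothing, and a priori the two terms of $f_1n_1+f_2n_2$ could cancel at $\gamma_1$ or $\gamma_2$. The repair is to note that cancellation is impossible: since $\supp^{\circ}(n_1)$ is itself a bisection containing $\gamma_1$, and $d(\gamma_1)=d(\gamma_2)$ with $\gamma_1\neq\gamma_2$, necessarily $n_1(\gamma_2)=0$, and symmetrically $n_2(\gamma_1)=0$; hence $(n_1+n_2)(\gamma_i)=n_i(\gamma_i)\neq 0$, so $n_1+n_2\in M$ already yields the contradiction and no auxiliary $f_i$ are needed. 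Second, ``$M\subseteq C_0(U)$ is immediate'' is not quite immediate: knowing $\supp^{\circ}(n)\subseteq U$ only says that the function $j(n)$ vanishes off $U$, whereas membership in $C_0(U)$ means lying in the closure of $C_c(U)$ inside $C^*_r(G)$. You must argue that $j(n)|_U$ vanishes at infinity on $U$ (a net in $U$ escaping every compact subset of $U$ either escapes every compact subset of $G$, where $j(n)\in C_0(G)$ forces decay, or accumulates at a point of $G\setminus U$, where $j(n)$ vanishes by continuity), and then invoke injectivity of $j$: since $j$ maps the closure of $C_c(U)$ in $C^*_r(G)$ isometrically onto the function space $C_0(U)$, there exists $m$ in that closure with $j(m)=j(n)$, whence $n=m\in C_0(U)$. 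The same identification-via-$j$ device is what legitimizes writing $h_i=f_in_i$ in your partition-of-unity argument for the reverse inclusion $C_0(U)\subseteq M$. With these two repairs your proof is correct and, being self-contained, is arguably more useful to a reader than the paper's bare citation.
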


\subsection{\'Etale groupoids associated to inverse semigroup actions}

Many \'etale groupoids arise from actions of inverse semigroups to topological spaces.
We recall how to construct an \'etale groupoid from an inverse semigroup action.

Let $X$ be a locally compact Hausdorff space.
Recall that $I_X$ is the inverse semigroup of homeomorphisms between open sets in $X$.
For $e\in E(S)$, we denote the domain of $\alpha_e$ by $D_e^{\alpha}$.
Then $\alpha_s$ is a homeomorphism from $D_{s^*s}^{\alpha}$ to $D_{ss^*}^{\alpha}$.
We often omit $\alpha$ of $D_{e}^{\alpha}$ if there is no chance to confuse.

For an action $\alpha\colon S\curvearrowright X$,
we associate an \'etale groupoid $S\ltimes_{\alpha}X$ as the following.
First we put the set $S*X\defeq \{(s,x) \in S\times X \mid x\in D^{\alpha}_{s^*s}\}$.
Then we define an equivalence relation $\sim$ on $S*X$ by declaring that $(s,x)\sim (t,y)$ holds if
\[
\text{$x=y$ and there exists $e\in E(S)$ such that $x\in D^{\alpha}_e$ and $se=te$}.  
\]
Set $S\ltimes_{\alpha}X\defeq S*X/{\sim}$ and denote the equivalence class of $(s,x)\in S*X$ by $[s,x]$.
The unit space of $S\ltimes_{\alpha}X$ is $X$, where $X$ is identified with the subset of $S\ltimes_{\alpha}X$ via the injective map
\[
X\ni x\mapsto [e,x] \in S\ltimes_{\alpha}X, x\in D^{\alpha}_e.
\]
The domain map and range maps are defined by
\[
d([s,x])=x, r([s,x])=\alpha_s(x)
\]
for $[s,x]\in S\ltimes_{\alpha}X$.
The product of $[s,\alpha_t(x)],[t,x]\in S\ltimes_{\alpha}X$ is $[st,x]$.
The inverse is $[s,x]^{-1}=[s^*,\alpha_s(x)]$.
Then $S\ltimes_{\alpha}X$ is a groupoid in these operations.
For $s\in S$ and an open set $U\subset D_{s^*s}^{\alpha}$,
define 
\[[s, U]\defeq \{[s,x]\in S\ltimes_{\alpha}X\mid x\in U\}.\]
These sets form an open basis of $S\ltimes_{\alpha}X$.
In these structures,
$S\ltimes_{\alpha}X$ is a locally compact \'etale groupoid,
although $S\ltimes_{\alpha}X$ is not necessarily Hausdorff.
In this paper,
we only treat inverse semigroup actions $\alpha\colon S\curvearrowright X$ such that $S\ltimes_{\alpha}X$ become Hausdorff.

\begin{ex}\label{example : action of inverse semigroups of bisections}
	Let $G$ be an \'etale groupoid.
	For $U\in \Bis(G)$,
	put $\theta_U=r|_U\circ d|_U^{-1}$.
	Then $\theta_U\colon d(U)\to r(U)$ is a homeomorphism and we obtain an action $\theta\colon \Bis(G)\curvearrowright G^{(0)}$.
	We call this action the canonical action of $\Bis(G)$.
	By \cite[Proposition 5.4]{ExelcombinatrialC*algebra},
	$G$ is isomorphic to $\Bis(G)\ltimes_{\theta}G^{(0)}$.
	Indeed,
	the map
	\[
	\Phi\colon \Bis(G)\ltimes_{\alpha}G^{(0)}\ni [U,x]\mapsto \alpha\in G
	\]
	is an isomorphism,
	where $\alpha$ is the unique element in $U$ such that $d(\alpha)=x$.
\end{ex}

We will use the following proposition to construct a groupoid homomorphism.
The proof is left to the readers.
\begin{prop}\label{proposition equivariant map induces groupoid homomorphism}
	Let $\alpha\colon S\curvearrowright X$ and $\beta\colon T\curvearrowright Y$ be actions of inverse semigroups $S$ and $T$ on topological spaces $X$ and $Y$.
	Assume that a continuous map $\sigma\colon X\to Y$ and a semigroup homomorphism $\psi\colon S\to T$ satisfies the following condition : 
	\begin{center}
		If $x\in X$ and $s\in S$ satisfies $x\in D^{\alpha}_{s^*s}$,
		then $\sigma(x)\in D^{\beta}_{\psi(s^*s)}$ and $\beta_{\psi(s)}(\sigma(x))=\sigma(\alpha_s(x))$ hold.
	\end{center}
	Then the map
	\[\Phi\colon S\ltimes_{\alpha} X\ni [s,x]\to [\psi(s), \sigma(x)]\in T\ltimes_{\beta}Y \]
	is a continuous groupoid homomorphism.
	If $\sigma\colon X\to Y$ is locally homeomorphic,
	then $\Phi$ is also locally homeomorphic.
\end{prop}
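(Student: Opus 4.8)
The plan is to verify in turn that $\Phi$ is well-defined, that it is a groupoid homomorphism, that it is continuous, and finally that it is locally homeomorphic when $\sigma$ is; all four reduce to the canonical charts of the transformation groupoids together with the stated equivariance condition.

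First I would check well-definedness. For $[s,x]\in S\ltimes_{\alpha}X$ one has $x\in D^{\alpha}_{s^*s}$, so the hypothesis gives $\sigma(x)\in D^{\beta}_{\psi(s^*s)}=D^{\beta}_{\psi(s)^*\psi(s)}$, whence $(\psi(s),\sigma(x))\in T*Y$ and $[\psi(s),\sigma(x)]$ makes sense. To see that the value is independent of the representative, suppose $[s,x]=[t,y]$, so that $x=y$ and $se=te$ for some $e\in E(S)$ with $x\in D^{\alpha}_e$. Applying $\psi$ yields $\psi(s)\psi(e)=\psi(t)\psi(e)$, and $\psi(e)\in E(T)$ because semigroup homomorphisms preserve idempotents. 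Since $e=e^*e$, the hypothesis applied to $e$ gives $\sigma(x)\in D^{\beta}_{\psi(e)}$, so $\psi(e)$ witnesses $[\psi(s),\sigma(x)]=[\psi(t),\sigma(y)]$.

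Next I would verify the homomorphism property on the defining products $[s,\alpha_t(x)]\,[t,x]=[st,x]$. Using the equivariance identity $\sigma(\alpha_t(x))=\beta_{\psi(t)}(\sigma(x))$ one computes $\Phi([s,\alpha_t(x)])=[\psi(s),\beta_{\psi(t)}(\sigma(x))]$, and then the product rule in $T\ltimes_{\beta}Y$ gives
\[
[\psi(s),\beta_{\psi(t)}(\sigma(x))]\,[\psi(t),\sigma(x)]=[\psi(s)\psi(t),\sigma(x)]=[\psi(st),\sigma(x)]=\Phi([st,x]).
\]
Composability of the images is automatic, since $d(\Phi([s,\alpha_t(x)]))=\beta_{\psi(t)}(\sigma(x))=r(\Phi([t,x]))$; preservation of inverses and units then follows formally.

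For continuity and the local homeomorphism claim I would argue in charts. For each $s\in S$ the map $D^{\alpha}_{s^*s}\ni x\mapsto [s,x]$ is a homeomorphism onto the basic open set $[s,D^{\alpha}_{s^*s}]$, and likewise on the target side, and in these coordinates $\Phi$ is exactly the restriction $\sigma|_{D^{\alpha}_{s^*s}}\colon D^{\alpha}_{s^*s}\to D^{\beta}_{\psi(s)^*\psi(s)}$, which is well-defined by the hypothesis. Continuity of $\sigma$ therefore gives continuity of $\Phi$ on each $[s,D^{\alpha}_{s^*s}]$, and these sets cover $S\ltimes_{\alpha}X$. If moreover $\sigma$ is a local homeomorphism, then given $[s,x]$ I would choose an open $U\subseteq D^{\alpha}_{s^*s}$ containing $x$ on which $\sigma$ restricts to a homeomorphism onto an open subset of $Y$; then $\Phi$ carries the neighborhood $[s,U]$ homeomorphically onto the basic open set $[\psi(s),\sigma(U)]$, so $\Phi$ is locally homeomorphic. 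The only genuinely delicate point is the well-definedness step, where one must combine the preservation of idempotents by $\psi$ with the equivariance condition to place $\sigma(x)$ in the correct domain $D^{\beta}_{\psi(e)}$; the remaining steps are routine translations through the canonical charts.
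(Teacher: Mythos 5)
Your proof is correct, and since the paper explicitly leaves this proposition's proof to the reader, there is nothing to compare it against; your argument via the canonical charts $x\mapsto[s,x]$ is the natural one and handles the only delicate points properly (using that $\psi$ preserves idempotents and generalized inverses to get well-definedness, and that $\sigma(U)$ is open in $Y$ so that $[\psi(s),\sigma(U)]$ is a basic open set in the local homeomorphism step). No gaps.
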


\section{Main theorems}

\subsection{*-homomorphisms between groupoid C*-algebras}\label{subsection *-homomorphism between groupoid C*-algebras}

In this subsection, we investigate *-homomorphisms between groupoid C*-algebras.
Our goal in this subsection is to show the next theorem.

\begin{thm}\label{thm summary of main theorem }
	Let $G$ be an \'etale groupoid and $H$ be an \'etale effective groupoid.
	Assume that we are given a *-homomorphism $\varphi\colon C^*_r(G)\to C^*_r(H)$ such that $\varphi(C_0(G^{(0)}))\subset C_0(H^{(0)})$ holds and $\varphi(C_0(G^{(0)}))$ is an ideal of $C_0(H^{(0)})$.
	Then there exist
	\begin{enumerate}
		\item a closed invariant subset $F\subset G^{(0)}$,
		\item a locally homeomorphic groupoid homomorphism $\Phi\colon G_F\to H$ such that $\Phi|_{U\cap G_F}$ is a homeomorphism onto its image for each $U\in \Bis(G)$\footnote{In particular, $\Phi|_{F}$ is a homeomorphism onto the open subset of $H^{(0)}$.}, and
		\item a continuous cocycle $c\colon G_F\to \T$
	\end{enumerate}
	which satisfy the following property :

	For each $U\in \Bis(G)$,
	the following diagram is commutative :
	
		\begin{center}
		\begin{tikzpicture}[auto]
		\node (a) at (0,0) {$C_0(U)$};
		\node (c) at (3,0){$C_0(\Phi(U\cap G_F))$};
		\node (d) at (0,-2) {$C_0(U\cap G_F)$};
		\draw[->] (a) to node {$\varphi_U$} (c) ;
		\draw[->,swap] (a) to node {$q_U$} (d);
		\draw[->,swap] (d) to node {$\varphi_{\Phi,c,U}$} (c);
		\end{tikzpicture}
		,
	\end{center}
	where $\varphi_U\colon C_0(U)\to C_0(\Phi(U\cap G_F))$ is the restriction $\varphi|_{C_0(U)}$,
	$q_U\colon C_0(U)\to C_0(U\cap G_F)$ is a surjective bounded linear map defined by $q_U(f)=f|_{U\cap G_F}$ for $f\in C_0(U)$ and $\varphi_{\Phi, c,U}\colon C_0(U\cap G_F)\to C_0(\Phi(U\cap G_F))$ is a linear isometric isomorphism defined by
	\[
	\varphi_{\Phi, c, U}(f)(\delta)\defeq c(\Phi^{-1}(\delta)) f(\Phi^{-1}(\delta))
	\]
	for $f\in C_0(U\cap G_F)$ and $\delta\in \Phi(U\cap G_F)$.
	
	Moreover,
	assume that there exists a *-homomorphism $\widetilde{\varphi}\colon C^*_r(G_F)\to C^*_r(H)$ with the following commutative diagram\footnote{Note that this $\widetilde{\varphi}$ is unique if it exists since $q$ is surjective.
	In Example \ref{ex widetilde varphi dose not exists},
	we give an example such that $\widetilde{\varphi}$ does not exist.} :
	
		\begin{center}
		\begin{tikzpicture}[auto]
		\node (a) at (0,0) {$C^*_r(G)$};
		\node (c) at (3,0){$C^*_r(H)$};
		\node (d) at (0,-2) {$C^*_r(G_F)$};
		\draw[->] (a) to node {$\varphi$} (c) ;
		\draw[->,swap] (a) to node {$q$} (d);
		\draw[->,swap] (d) to node {$\widetilde{\varphi}$} (c);
		\end{tikzpicture}
		,
	\end{center}
	where $q\colon C^*_r(G)\to C^*_r(G_F)$ denote the *-homomorphism induced by the restriction \[C_c(G)\ni f\mapsto f|_{G_F}\in C_c(G_F).\]
	Then the formula
	\[\widetilde{\varphi}(f)(\delta)=\sum_{\alpha\in \Phi^{-1}(\{\delta\})}c(\alpha) f(\alpha)\]
	holds for all $f\in C_c(G_F)$ and $\delta\in H$.
\end{thm}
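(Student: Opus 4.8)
The plan is to establish the main structural statement in stages, following the roadmap laid out in the introduction. First I would set up the inverse-semigroup machinery: by Example \ref{example : action of inverse semigroups of bisections}, $G$ is isomorphic to $\Bis(G)\ltimes_\theta G^{(0)}$, and likewise $H\cong\Bis(H)\ltimes H^{(0)}$. The hypothesis that $\varphi(C_0(G^{(0)}))\subset C_0(H^{(0)})$ is an ideal means that $\varphi$ restricts to a $*$-homomorphism between commutative algebras, so by Gelfand duality it is induced by a partial map on the spectra. Concretely, I expect $\varphi|_{C_0(G^{(0)})}$ to correspond to a continuous proper map $\sigma$ from an open subset of $H^{(0)}$ back into $G^{(0)}$; the closure of its image should be the closed invariant subset $F$ of item (1), and invariance of $F$ will need to be extracted from the fact that $\varphi$ intertwines the bimodule structure over $C_0(G^{(0)})$ and $C_0(H^{(0)})$.

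Next I would treat an individual bisection $U\in\Bis(G)$. Under $\Psi$ of Theorem \ref{thm homomorphism from Bis to slices is isomorphism}, $C_0(U)$ is a slice in $\mathcal{S}(C^*_r(G),C_0(G^{(0)}))$, and $\varphi$ carries slices to slices because it respects the bimodule actions and sends normalizers to normalizers. Since $H$ is effective, the second half of Proposition \ref{prop open support of normaliser is a bisection} guarantees that the image is supported on an open bisection of $H$, and $\Psi$ being an isomorphism in the effective case lets me read off a bisection of $H$ from $\varphi(C_0(U))$. Comparing the action of $\Bis(G)$ on $G^{(0)}$ with that of $\Bis(H)$ on $H^{(0)}$ through $\sigma$ should verify the equivariance hypothesis of Proposition \ref{proposition equivariant map induces groupoid homomorphism}, which then manufactures the groupoid homomorphism $\Phi\colon G_F\to H$ of item (2); its local homeomorphism property comes from $\sigma$ being locally homeomorphic on $F$. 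The cocycle $c$ of item (3) arises because $\varphi_U$ need not preserve the normalization of functions pointwise — the phase discrepancy between $f$ and $\varphi_U(f)\circ\Phi$ defines a $\T$-valued function, and the cocycle identity follows from the multiplicativity $\varphi(C_0(U)C_0(V))=\varphi(C_0(U))\varphi(C_0(V))$. Establishing the commutativity of the first diagram is then essentially unwinding the definition of $\varphi_{\Phi,c,U}$ against these identifications.

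For the final displayed formula, the key observation is that $\widetilde{\varphi}$ and $\varphi$ are linked through $q$, and both are $*$-homomorphisms that are continuous in the reduced norm. It therefore suffices to verify the formula on $C_c(G_F)$, and by decomposing a general $f\in C_c(G_F)$ into a finite sum $\sum_i f_i$ with each $f_i$ supported in a single bisection $U_i\cap G_F$ (using a partition of unity subordinate to a bisection cover), linearity reduces the claim to the one-bisection case. On a single bisection the map $\Phi$ is injective, so the fiber $\Phi^{-1}(\{\delta\})$ meets $\supp^\circ(f_i)$ in at most one point, and the sum collapses to a single term $c(\alpha)f_i(\alpha)$ with $\Phi(\alpha)=\delta$; this is exactly what the already-established formula $\varphi_{\Phi,c,U}(f)(\delta)=c(\Phi^{-1}(\delta))f(\Phi^{-1}(\delta))$ asserts via the commuting triangle, using $\widetilde{\varphi}\circ q=\varphi$ and $q_U=q|_{C_0(U)}$. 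Summing over $i$ and noting that for fixed $\delta$ the fibers over distinct bisections contribute the distinct preimages in $\Phi^{-1}(\{\delta\})$ recovers the full sum over the fiber.

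The main obstacle I anticipate is controlling the passage from the bisection-by-bisection description to the global formula, specifically showing that $\widetilde\varphi$ evaluated via the $j$-embedding of Proposition \ref{prop evaluation} genuinely computes the pointwise sum over the fiber $\Phi^{-1}(\{\delta\})$ rather than merely agreeing with it on each slice separately. This requires care because $\Phi$ is only \emph{locally} injective, so a single $\delta\in H$ may have several preimages coming from different bisections, and I must ensure that the contributions add correctly without overcounting or dropping the overlaps where the bisection decomposition is not canonical. The Hausdorffness assumption on $G$ and the fact that $\Phi^{-1}(\{\delta\})$ is discrete (a consequence of $\Phi$ being a local homeomorphism) will be essential to guarantee the sum is finite and well-defined for $f\in C_c(G_F)$.
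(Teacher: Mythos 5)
Your proposal follows essentially the same route as the paper's own proof: cut down to $F$ via Gelfand duality applied to $\varphi|_{C_0(G^{(0)})}$, use the ideal hypothesis to show each $\varphi(C_0(U))$ is a slice and invoke the effectiveness of $H$ through Theorem \ref{thm homomorphism from Bis to slices is isomorphism} to obtain a semigroup homomorphism into $\Bis(H)$, feed the resulting equivariant pair into Proposition \ref{proposition equivariant map induces groupoid homomorphism} to manufacture $\Phi$, define $c$ as the pointwise phase discrepancy, and finally reduce the global formula to bisection-supported functions. The overcounting obstacle you anticipate in the last step does not actually arise: both $\widetilde{\varphi}$ and the fiber-sum formula are linear in $f$, so the decomposition of $f\in C_c(G_F)$ into finitely many bisection-supported pieces settles the identity at once, exactly as in the paper's Proposition \ref{prop decompotision of varphi}.
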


In short, the first half of Theorem \ref{thm summary of main theorem } states that the local property of a given *-homomorphism $\varphi$ can be described in terms of underlying \'etale groupoids.
The latter half states that $\varphi$ itself can be described in terms of \'etale groupoids if there exists $\widetilde{\varphi}$.
In Example \ref{ex widetilde varphi dose not exists} and the text above it,
we will investigate the condition that $\widetilde{\varphi}$ exists.

First, we summarize standing assumptions in this subsection.
In the entirety of this subsection,
we assume that $G$ and $H$ are \'etale groupoids.
Moreover, we assume that $H$ is effective except for Lemma \ref{lem: C_0(U) is closed} and Lemma \ref{lem C_0(U) is subbimodule that consists of normalizers}.
In addition, we assume that $\varphi\colon C^*_r(G)\to C^*_r(H)$ is a *-homomorphism such that $\varphi(C_0(G^{(0)}))\subset C_0(H^{(0)})$.
Except for Lemma \ref{lem: C_0(U) is closed},
we assume that $\varphi(C_0(G^{(0)}))\subset C_0(H^{(0)})$ is an ideal.

Since $\ker\varphi \cap C_0(G^{(0)})$ is an ideal of $C_0(G^{(0)})$,
there exists a closed subset $F\subset G^{(0)}$ such that $C_0(G^{(0)}\setminus F)=\ker\varphi \cap C_0(G^{(0)})$ holds.
By \cite[Lemma 10.3.1]{asims},
$F$ is an invariant set of $G$ and therefore $G_F\defeq d^{-1}(F)$ is a closed subgroupoid of $G$.

For $U\in \Bis(G)$,
recall that $C_0(U)$ is a $C_0(G^{(0)})$-subbimodule of $C^*_r(G)$ in the natural way.
We put $\varphi_U\defeq \varphi|_{C_0(U)}$.

\begin{lem}\label{lem: C_0(U) is closed}
	Let $G$ and $H$ be \'etale groupoids and $U\in \Bis(G)$.
	Assume that a *-homomorphism $\varphi\colon C^*_r(G)\to C^*_r(H)$ satisfies $\varphi(C_0(G^{(0)}))\subset C_0(H^{(0)})$.
	Then there exists an isometric linear map $\widetilde{\varphi_U}\colon C_0(U\cap G_F)\to C_0(H^{(0)})$ that makes the following diagram commutative : 
	\begin{center}
		\begin{tikzpicture}[auto]
		\node (a) at (0,0) {$C_0(U)$};
		\node (c) at (3,0){$\varphi(C_0(U))$};
		\node (d) at (0,-2) {$C_0(U\cap G_F)$};
		\draw[->] (a) to node {$\varphi_U$} (c) ;
		\draw[->,swap] (a) to node {$q_U$} (d);
		\draw[->,swap] (d) to node {$\widetilde{\varphi_U}$} (c);
		\end{tikzpicture}
		,
	\end{center}
	where $q_U\colon C_0(U)\to C_0(U\cap G_F)$ denotes the surjective bounded linear map defined by the restriction.
	In particular, $\varphi(C_0(U))\subset C^*_r(H)$ is a closed linear subspace.
\end{lem}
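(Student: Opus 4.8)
The plan is to reduce the entire statement to a single norm identity, namely $\lVert\varphi(f)\rVert = \lVert q_U(f)\rVert$ for every $f\in C_0(U)$; once this is in hand, the factorization through $q_U$, the isometry of $\widetilde{\varphi_U}$, and the closedness of $\varphi(C_0(U))$ all follow formally.

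First I would analyze the restriction $\varphi|_{C_0(G^{(0)})}\colon C_0(G^{(0)})\to C_0(H^{(0)})$. This is a $*$-homomorphism between commutative C*-algebras whose kernel is, by the very definition of $F$, exactly $C_0(G^{(0)}\setminus F)$. Hence it factors as the restriction $C_0(G^{(0)})\to C_0(F)$, $g\mapsto g|_F$, followed by an \emph{injective} $*$-homomorphism $C_0(F)\hookrightarrow C_0(H^{(0)})$; since injective $*$-homomorphisms are isometric, this gives $\lVert\varphi(g)\rVert = \lVert g|_F\rVert = \sup_{x\in F}\lvert g(x)\rvert$ for all $g\in C_0(G^{(0)})$. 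This is the one place where the hypothesis on $\ker\varphi\cap C_0(G^{(0)})$ actually enters.

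Next I would make the bimodule structure explicit. For $f\in C_c(U)$ the element $f^*f$ lies in $C_0(G^{(0)})$ (as recalled just before Theorem \ref{thm homomorphism from Bis to slices is isomorphism}), and because $U$ is a bisection a direct computation gives $f^*f(x)=\lvert f(\beta)\rvert^2$ when $\beta$ is the unique element of $U$ with $d(\beta)=x$, and $f^*f(x)=0$ otherwise; by continuity this persists for all $f\in C_0(U)$. Under the homeomorphism $d|_U$, the function $(f^*f)|_F$ thus corresponds to $\lvert f\rvert^2$ on $U\cap G_F=U\cap d^{-1}(F)$, so that $\sup_{x\in F}\lvert f^*f(x)\rvert=\sup_{\alpha\in U\cap G_F}\lvert f(\alpha)\rvert^2=\lVert q_U(f)\rVert^2$. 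Combining this with the first step and the C*-identity yields
\[
\lVert\varphi(f)\rVert^2=\lVert\varphi(f)^*\varphi(f)\rVert=\lVert\varphi(f^*f)\rVert=\lVert(f^*f)|_F\rVert=\lVert q_U(f)\rVert^2,
\]
which is precisely the desired norm identity.

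Finally I would harvest the consequences. The identity shows $\ker\varphi_U=\ker q_U$, and since $q_U$ is the stated surjection $C_0(U)\to C_0(U\cap G_F)$, there is a unique linear map $\widetilde{\varphi_U}$ with $\widetilde{\varphi_U}\circ q_U=\varphi_U$; the norm identity then says exactly that $\widetilde{\varphi_U}$ is isometric. Because $C_0(U\cap G_F)$ is complete and $\widetilde{\varphi_U}$ is isometric, its image $\varphi(C_0(U))=\widetilde{\varphi_U}(C_0(U\cap G_F))$ is complete, hence closed in $C^*_r(H)$, giving the last assertion. I expect no serious obstacle here: the only points requiring genuine care are checking that $q_U$ is really onto $C_0(U\cap G_F)$ (a Tietze-type extension, using that $U\cap G_F$ is closed in the locally compact Hausdorff space $U$) and confirming that the pointwise formula for $f^*f$ survives the passage from $C_c(U)$ to its closure $C_0(U)$.
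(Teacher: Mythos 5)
Your proposal is correct and follows essentially the same route as the paper: both reduce to the norm identity $\lVert\varphi(m)\rVert=\sup_{\alpha\in U\cap G_F}\lvert m(\alpha)\rvert$ by factoring $\varphi|_{C_0(G^{(0)})}$ through the restriction $C_0(G^{(0)})\to C_0(F)$ and using that the resulting injective *-homomorphism into $C_0(H^{(0)})$ is isometric, then apply the C*-identity to $m^*m\in C_0(G^{(0)})$ and conclude the isometric factorization and closedness from completeness of $C_0(U\cap G_F)$. The only cosmetic difference is that you verify the pointwise formula for $f^*f$ by hand, whereas the paper quotes the identification of the reduced norm with the supremum norm on $C_0(U)$ recalled in the preliminaries.
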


\begin{proof}
	
	Recall that $F\subset G^{(0)}$ is a closed invariant subset of $G$ such that $C_0(G^{(0)}\setminus F)=\ker\varphi \cap C_0(G^{(0)})$.
	We claim that $\lVert \varphi(m)\rVert= \sup_{\alpha\in U\cap G_F}\lvert m(\alpha)\rvert$ holds for all $m\in C_0(U)$.
	Since $U$ is a bisection,
	$m$ is a normalizer for $C_0(G^{(0)})$ by Proposition \ref{prop open support of normaliser is a bisection}.
	Hence $\varphi(m^*m)\in C_0(H^{(0)})$ follows from $m^*m\in C_0(G^{(0)})$ and $\varphi(C_0(G^{(0)}))\subset C_0(H^{(0)})$.
	By the definition of $F\subset G^{(0)}$,
	there exists an injective *-homomorphism $\widetilde{\varphi_{G^{(0)}}}\colon C_0(F)\to C_0(H^{(0)})$ that makes the following diagram commutative : 
	\begin{center}
		\begin{tikzpicture}[auto]
		\node (a) at (0,0) {$C_0(G^{(0)})$};
		\node (c) at (3,0){$C_0(H^{(0)})$};
		\node (d) at (0,-2) {$C_0(F)$};
		\draw[->] (a) to node {$\varphi_{G^{(0)}}$} (c) ;
		\draw[->,swap] (a) to node {$q_{G^{(0)}}$} (d);
		\draw[->,swap] (d) to node {$\widetilde{\varphi_{G^{(0)}}}$} (c);
		\end{tikzpicture}
		,
	\end{center}
	where $\varphi_{G^{(0)}}$ is the restriction of $\varphi$ to $C_0(G^{(0)})$ and $q_{G^{(0)}}\colon C_0(G^{(0)})\to C_0(F)$ denotes the *-homomorphism defined by the restriction.
	Hence, we obtain
	\[\lVert \varphi(m)\rVert^2=\lVert \varphi(m^*m)\rVert=\lVert q_{G^{(0)}}(m^*m)\rVert=\sup_{x\in F}\lvert m^*m(x) \rvert=\sup_{\alpha\in U\cap G_F}\lvert m(\alpha)\rvert^2\]
	and therefore $\lVert \varphi(m)\rVert= \sup_{\alpha\in U\cap G_F}\lvert m(\alpha)\rvert$.
	
	Now, we obtain an isometry $\widetilde{\varphi_U}\colon C_0(U\cap G_F)\to \varphi(C_0(U))$ that makes the following diagram commutative : 
	\begin{center}
		\begin{tikzpicture}[auto]
		\node (a) at (0,0) {$C_0(U)$};
		\node (c) at (3,0){$\varphi(C_0(U))$};
		\node (d) at (0,-2) {$C_0(U\cap G_F)$};
		\draw[->] (a) to node {$\varphi_U$} (c) ;
		\draw[->,swap] (a) to node {$q_U$} (d);
		\draw[->,swap] (d) to node {$\widetilde{\varphi_U}$} (c);
		\end{tikzpicture}
		,
	\end{center}
	where $q_U\colon C_0(U)\to C_0(U\cap G_F)$ denotes the surjective bounded linear map defined by the restriction.
	Since $C_0(U\cap G_F)$ is complete and $\widetilde{\varphi_U}$ is an isometry,
	$\varphi(C_0(U))=\widetilde{\varphi_U}(C_0(U\cap G_F))$ is a closed linear subspace of $C^*_r(H)$.
	\qed
		
\end{proof}

In the rest of this subsection,
we assume that $\varphi(C_0(G^{(0)}))\subset C_0(H^{(0)})$ is an ideal of $C_0(H^{(0)})$.

\begin{lem}\label{lem C_0(U) is subbimodule that consists of normalizers}
	Assume that $\varphi(C_0(G^{(0)}))\subset C_0(H^{(0)})$ is an ideal of $C_0(H^{(0)})$.
	Then $\varphi(C_0(U))\subset C^*_r(H)$ is a $C_0(H^{(0)})$-subbimodule and \[\varphi(C_0(U))\subset N(C^*_r(H), C_0(H^{(0)}))\] holds for all $U\in \Bis(G)$.
\end{lem}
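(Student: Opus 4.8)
The plan is to exploit two structural facts: that $C_0(U)$ is a \emph{nondegenerate} $C_0(G^{(0)})$-bimodule, and that $\varphi(C_0(U))$ is already known to be norm closed by Lemma \ref{lem: C_0(U) is closed}. I would fix an approximate unit $(e_\lambda)$ of $C_0(G^{(0)})$. Since the sup norm on the bisection $U$ makes $C_0(U)$ a nondegenerate bimodule, for $m\in C_0(U)$ the products $e_\lambda m$ and $m e_\lambda$ converge to $m$ in $C^*_r(G)$; applying the *-homomorphism $\varphi$ then gives $\varphi(e_\lambda)\varphi(m)\to\varphi(m)$ and $\varphi(m)\varphi(e_\lambda)\to\varphi(m)$. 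This approximate-unit device is what converts the hypothesis that $\varphi(C_0(G^{(0)}))$ is an ideal of $C_0(H^{(0)})$ into statements about the larger space $\varphi(C_0(U))$.

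For the subbimodule claim I would take $b\in C_0(H^{(0)})$ and $m\in C_0(U)$ and write $b\varphi(m)=\lim_\lambda b\varphi(e_\lambda)\varphi(m)$. Because $\varphi(e_\lambda)\in \varphi(C_0(G^{(0)}))$ and the latter is an ideal of the commutative algebra $C_0(H^{(0)})$, the product $b\varphi(e_\lambda)$ again lies in $\varphi(C_0(G^{(0)}))$, say $b\varphi(e_\lambda)=\varphi(f_\lambda)$ with $f_\lambda\in C_0(G^{(0)})$. Then $b\varphi(e_\lambda)\varphi(m)=\varphi(f_\lambda m)\in\varphi(C_0(U))$, since $f_\lambda m\in C_0(U)$ by the bimodule structure. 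As $\varphi(C_0(U))$ is closed, the limit $b\varphi(m)$ remains in $\varphi(C_0(U))$; the right-module inclusion is obtained identically using $\varphi(m)\varphi(e_\lambda)\to\varphi(m)$.

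For the normalizer claim, recall that each $m\in C_0(U)$ is a normalizer for $C_0(G^{(0)})$ by Proposition \ref{prop open support of normaliser is a bisection}, so $m\,C_0(G^{(0)})\,m^*\subset C_0(G^{(0)})$ and similarly for $m^*$; pushing these through $\varphi$ only yields $\varphi(m)\,\varphi(C_0(G^{(0)}))\,\varphi(m)^*\subset C_0(H^{(0)})$. The hard part is upgrading this from the ideal $\varphi(C_0(G^{(0)}))$ to all of $C_0(H^{(0)})$. To do so I would sandwich again: for $b\in C_0(H^{(0)})$, the element $\varphi(e_\lambda)\,b\,\varphi(e_\mu)$ equals $b\,\varphi(e_\lambda)\varphi(e_\mu)$ by commutativity of $C_0(H^{(0)})$ and hence lies in the ideal $\varphi(C_0(G^{(0)}))$, say it equals $\varphi(h_{\lambda\mu})$. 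Then $\varphi(m)\varphi(e_\lambda)\,b\,\varphi(e_\mu)\varphi(m)^*=\varphi(m\,h_{\lambda\mu}\,m^*)\in C_0(H^{(0)})$ by the normalizer property of $m$. Taking the iterated limit, first in $\lambda$ and then in $\mu$, using $\varphi(m)\varphi(e_\lambda)\to\varphi(m)$ and $\varphi(e_\mu)\varphi(m)^*\to\varphi(m)^*$ together with the closedness of $C_0(H^{(0)})$ in $C^*_r(H)$, gives $\varphi(m)\,b\,\varphi(m)^*\in C_0(H^{(0)})$. Finally, applying this conclusion to $m^*\in C_0(U^{-1})$, which is again a bisection, yields $\varphi(m)^*\,C_0(H^{(0)})\,\varphi(m)\subset C_0(H^{(0)})$, so that $\varphi(m)$ is a normalizer. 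The one delicate point to verify is the legitimacy of the iterated limit, which follows from continuity of multiplication applied to the bounded nets involved, iterating over $\lambda$ for each fixed $\mu$ and then over $\mu$.
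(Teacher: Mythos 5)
Your proof is correct and follows essentially the same route as the paper's: an approximate unit $(e_\lambda)\subset C_0(G^{(0)})$, the ideal hypothesis to rewrite products like $b\varphi(e_\lambda)$ as elements of $\varphi(C_0(G^{(0)}))$, the normalizer/bimodule properties of $C_0(U)$ pushed through $\varphi$, and the closedness of $\varphi(C_0(U))$ from Lemma \ref{lem: C_0(U) is closed}. The only (harmless) difference is that your normalizer step uses an iterated two-index limit where a single one suffices, since $\varphi(e_\lambda)b\in\varphi(C_0(G^{(0)}))$ already lets you write $\varphi(m)\varphi(e_\lambda)b\varphi(m)^*=\varphi(m g_\lambda m^*)\in C_0(H^{(0)})$ and pass to the limit in $\lambda$ alone.
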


\begin{proof}
	
	Take $m\in C_0(U)$ and $a, b\in C_0(H^{(0)})$.
	Let $\{e_i\}_{i\in I}\subset C_0(G^{(0)})$ be an approximate identity for $C^*_r(G)$.
	Since we assume that $\varphi(C_0(G^{(0)}))$ is an ideal of $C_0(H^{(0)})$,
	$a\varphi(e_i)$ and $\varphi(e_i)b$ are contained in $\varphi(C_0(G^{(0)}))$ for all $i\in I$.
	There exists $f,g\in C_0(G^{(0)})$ such that $\varphi(f)=a\varphi(e_i)$ and $\varphi(g)=\varphi(e_i)b$ hold.
	Now we have \[a\varphi(e_i)\varphi(m)\varphi(e_i)b=\varphi(fmg)\in \varphi(C_0(U))\]
	since $C_0(U)\subset C^*_r(G)$ is a $C_0(G^{(0)})$-subbimodule.
	Since $\{e_ime_i\}_{i\in I}$ converges to $m$,
	we obtain $a\varphi(m)b\in \varphi(C_0(U))$ by Lemma \ref{lem: C_0(U) is closed}.
	Hence, $\varphi(C_0(U))$ is a $C_0(H^{(0)})$-subbimodule.

	Next, we show $\varphi(C_0(U))\subset N(C^*_r(H), C_0(H^{(0)}))$.
	Take $m\in C_0(U)$ and $h\in C_0(H^{(0)})$.
	Let $\{e_i\}_{i\in I}\subset C_0(G^{(0)})$ be an approximate identity for $C^*_r(G)$.
	Since $\varphi(e_i)h\in \varphi(C_0(G^{(0)}))$ and $m\in N(C^*_r(G),C_0(G^{(0)}))$,
	we have $\varphi(m)\varphi(e_i)h\varphi(m^*)\in \varphi(C_0(G^{(0)}))(\subset C_0(H^{(0)}))$.
	Hence we obtain $\varphi(m)h\varphi(m)^*\in C_0(H^{(0)})$.
	One can show $\varphi(m)^*h\varphi(m)\in C_0(H^{(0)})$ in the same way.
	Therefore, $\varphi(m)$ is a normalizer for $C_0(H^{(0)})$ and we obtain $\varphi(C_0(U))\subset N(C^*_r(H), C_0(H^{(0)}))$.
	\qed
\end{proof}

In the rest of this subsection,
we assume that an \'etale groupoid $H$ is effective.

\begin{lem}\label{lem definition of the psi}
	Assume that $\varphi(C_0(G^{(0)}))\subset C_0(H^{(0)})$ is an ideal and $H$ is effective.
	Then there exists a semigroup homomorphism $\psi\colon \Bis(G)\to \Bis(H)$ such that $\varphi(C_0(U))=C_0(\psi(U))$ holds for all $U\in \Bis(G)$.
\end{lem}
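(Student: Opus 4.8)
The plan is to recognize each $\varphi(C_0(U))$ as a slice in $\mathcal{S}(C^*_r(H), C_0(H^{(0)}))$ and then invoke the slice–bisection correspondence for the effective groupoid $H$. Concretely, fix $U \in \Bis(G)$. By Lemma \ref{lem: C_0(U) is closed}, $\varphi(C_0(U))$ is a norm closed subspace of $C^*_r(H)$, and by Lemma \ref{lem C_0(U) is subbimodule that consists of normalizers} it is a $C_0(H^{(0)})$-subbimodule contained in $N(C^*_r(H), C_0(H^{(0)}))$. These are exactly the defining conditions for $\varphi(C_0(U))$ to be an element of $\mathcal{S}(C^*_r(H), C_0(H^{(0)}))$. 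Since $H$ is effective, Theorem \ref{thm homomorphism from Bis to slices is isomorphism} tells us that $\Psi\colon \Bis(H) \to \mathcal{S}(C^*_r(H), C_0(H^{(0)}))$, $V \mapsto C_0(V)$, is an isomorphism of inverse semigroups. In particular $\Psi$ is a bijection, so there is a unique $\psi(U) \in \Bis(H)$ with $\varphi(C_0(U)) = C_0(\psi(U))$. This uniqueness makes $\psi$ a well-defined map $\Bis(G) \to \Bis(H)$, and it already records the desired identity $\varphi(C_0(U)) = C_0(\psi(U))$.

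It then remains to check that $\psi$ is a semigroup homomorphism, i.e.\ $\psi(UV) = \psi(U)\psi(V)$ for all $U, V \in \Bis(G)$. Since $\Psi$ for $H$ is injective, it suffices to prove the corresponding identity of slices, $C_0(\psi(UV)) = C_0(\psi(U))\, C_0(\psi(V))$, where the product on the right is the slice product in $\mathcal{S}(C^*_r(H), C_0(H^{(0)}))$. The left-hand side is $\varphi(C_0(UV))$ by definition of $\psi$. Here I would use that $\Psi$ for $G$ is a semigroup homomorphism (Theorem \ref{thm homomorphism from Bis to slices is isomorphism} again) to write $C_0(UV) = C_0(U)\, C_0(V)$, the closure of the linear span of products $mn$ with $m \in C_0(U)$ and $n \in C_0(V)$.

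The heart of the argument is the interchange of $\varphi$ with the two slice products. Since $\varphi$ is a contractive $*$-homomorphism, it is continuous, linear and multiplicative, so it maps $\Span\{mn \mid m \in C_0(U),\ n \in C_0(V)\}$ onto $\Span\{\varphi(m)\varphi(n)\}$, which is precisely the linear span whose closure defines the slice product $\varphi(C_0(U))\,\varphi(C_0(V)) = C_0(\psi(U))\,C_0(\psi(V))$. Taking closures and using the crucial fact that $\varphi(C_0(UV))$ is itself closed (Lemma \ref{lem: C_0(U) is closed} applied to the bisection $UV$), one obtains $\varphi(C_0(UV)) = C_0(\psi(U))\,C_0(\psi(V))$. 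Combined with $C_0(\psi(U))\,C_0(\psi(V)) = C_0(\psi(U)\psi(V))$ from the homomorphism property of $\Psi$ for $H$, this yields $C_0(\psi(UV)) = C_0(\psi(U)\psi(V))$, and injectivity of $\Psi$ finishes the proof.

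I expect the main obstacle to be the closure bookkeeping in the last step: one must be sure that applying $\varphi$ to a norm closure and then forming the slice-product closure give the same closed subspace, which is exactly where the closedness of $\varphi(C_0(UV))$ provided by Lemma \ref{lem: C_0(U) is closed} is indispensable. The other essential, more structural, input is the effectiveness of $H$, without which $\varphi(C_0(U))$ need not be of the form $C_0(V)$ and $\psi$ could not even be defined.
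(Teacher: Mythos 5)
Your proposal is correct and follows essentially the same route as the paper: both identify $\varphi(C_0(U))$ as a slice via Lemma \ref{lem: C_0(U) is closed} and Lemma \ref{lem C_0(U) is subbimodule that consists of normalizers}, invoke Theorem \ref{thm homomorphism from Bis to slices is isomorphism} for the effective groupoid $H$ to define $\psi(U)$, and then verify multiplicativity through the chain $C_0(\psi(U_1)\psi(U_2))=C_0(\psi(U_1))C_0(\psi(U_2))=\varphi(C_0(U_1))\varphi(C_0(U_2))=\varphi(C_0(U_1U_2))=C_0(\psi(U_1U_2))$. The only difference is that you spell out the closure bookkeeping behind $\varphi(C_0(U_1))\varphi(C_0(U_2))=\varphi(C_0(U_1U_2))$, which the paper asserts without comment.
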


\begin{proof}
	Take $U\in \Bis(G)$.
	Then $\varphi(C_0(U))\subset C^*_r(H)$ is a closed $C_0(H^{(0)})$-subbimodule that consists of normalizers for $C_0(H^{(0)})$ by Lemma \ref{lem: C_0(U) is closed} and Lemma \ref{lem C_0(U) is subbimodule that consists of normalizers}.
	Hence, there exists a unique $\psi(U)\in\Bis(H)$ such that $\varphi(C_0(U))=C_0(\psi(U))$ holds by Theorem \ref{thm homomorphism from Bis to slices is isomorphism}.
	
	We show the map $\psi\colon \Bis(G)\to \Bis(H)$ is a semigroup homomorphism.
	Take $U_1,U_2\in\Bis(G)$.
	Then we have 
	\begin{align*}
	&C_0(\psi(U_1)\psi(U_2))=C_0(\psi(U_1))C_0(\psi(U_2))\\
	&=\varphi(C_0(U_1))\varphi(C_0(U_2))=\varphi(C_0(U_1U_2))\\
	&= C_0(\psi(U_1U_2)).
	\end{align*}
	Thus, we obtain $\psi(U_1)\psi(U_2)=\psi(U_1U_2)$ and $\psi$ is a semigroup homomorphism.
	\qed
\end{proof}

	We let $\psi\colon \Bis(G)\to \Bis(H)$ denote the semigroup homomorphism defined in Lemma \ref{lem definition of the psi}.
	
\begin{lem}

	Put \[T\defeq \{U\cap G_F\in \Bis(G_F)\mid U\in \Bis(G)\}. \]
	Then $T$ is an inverse subsemigroup of $\Bis(G_F)$ and there exists a subsemigroup homomorphism $\widetilde{\psi}\colon T\to \Bis(H)$ that makes the following diagram commutative : 
	
	\begin{center}
		\begin{tikzpicture}[auto]
		\node (a) at (0,0) {$\Bis(G)$};
		\node (c) at (3,0){$\Bis(H)$};
		\node (d) at (0,-2) {$T$};
		\draw[->] (a) to node {$\psi$} (c) ;
		\draw[->,swap] (a) to node {$Q$} (d);
		\draw[->,swap] (d) to node {$\widetilde{\psi}$} (c);
		\end{tikzpicture}
		,
	\end{center}
where $Q\colon \Bis(G)\to T$ is the semigroup homomorphism defined by $Q(U)=U\cap G_F$ for each $U\in \Bis(G)$.
\end{lem}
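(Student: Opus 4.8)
The plan is to prove the two assertions in turn: that $T$ is an inverse subsemigroup of $\Bis(G_F)$ together with $Q$ being a homomorphism, and then that $\psi$ descends along $Q$ to the desired $\widetilde{\psi}$.

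First I would record that for $U\in\Bis(G)$ the set $U\cap G_F$ is genuinely an open bisection of $G_F$: it is relatively open, and $d|_{U\cap G_F},r|_{U\cap G_F}$ are injective as restrictions of $d|_U,r|_U$. So $T\subset\Bis(G_F)$. The heart of the first assertion is the identity $(U_1\cap G_F)(U_2\cap G_F)=(U_1U_2)\cap G_F$, together with $(U\cap G_F)^{-1}=U^{-1}\cap G_F$. The forward inclusion of the product identity is immediate because $G_F$ is a subgroupoid. For the reverse inclusion, given $\gamma=\alpha\beta\in(U_1U_2)\cap G_F$ with $\alpha\in U_1$, $\beta\in U_2$, $d(\alpha)=r(\beta)$, I would use $d(\gamma)=d(\beta)\in F$ to get $\beta\in G_F$, then invoke invariance of $F$ to obtain $d(\alpha)=r(\beta)\in F$, whence $\alpha\in G_F$; thus $\gamma\in(U_1\cap G_F)(U_2\cap G_F)$. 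This closes $T$ under products and inverses, so $T$ is an inverse subsemigroup, and it simultaneously shows $Q(U_1)Q(U_2)=Q(U_1U_2)$ and $Q(U)^{-1}=Q(U^{-1})$, i.e.\ $Q$ is a surjective semigroup homomorphism onto $T$.

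Since $Q$ is a surjective homomorphism, a map $\widetilde{\psi}$ with $\widetilde{\psi}\circ Q=\psi$ exists and is automatically a homomorphism, provided $\psi$ is constant on the fibres of $Q$; that is, I must show $U_1\cap G_F=U_2\cap G_F$ implies $\psi(U_1)=\psi(U_2)$. The device here is to pass to the common refinement $V\defeq U_1\cap U_2\in\Bis(G)$, which satisfies $V\cap G_F=U_1\cap G_F=U_2\cap G_F\eqqcolon W$ (the inclusion $\subseteq$ is clear, and $\supseteq$ holds since $W\subset U_1\cap U_2$). It then suffices to prove $\psi(V)=\psi(U_1)$, and symmetrically $\psi(V)=\psi(U_2)$.

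For this, one inclusion is free: $V\subset U_1$ gives $C_0(V)\subset C_0(U_1)$, hence $C_0(\psi(V))=\varphi(C_0(V))\subset\varphi(C_0(U_1))=C_0(\psi(U_1))$ by Lemma \ref{lem definition of the psi}, and comparing open supports in $H$ yields $\psi(V)\subset\psi(U_1)$. The substantive inclusion $\psi(U_1)\subset\psi(V)$ is where the norm formula of Lemma \ref{lem: C_0(U) is closed} enters, and this is the main obstacle. Given $m\in C_0(U_1)$, I would use surjectivity of $q_V\colon C_0(V)\to C_0(W)$ to pick $\widetilde{m}\in C_0(V)$ with $\widetilde{m}|_W=m|_W$. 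Then $m-\widetilde{m}\in C_0(U_1)$ vanishes on $U_1\cap G_F=W$, so the identity $\lVert\varphi(m-\widetilde{m})\rVert=\sup_{\alpha\in U_1\cap G_F}\lvert(m-\widetilde{m})(\alpha)\rvert=0$ forces $\varphi(m)=\varphi(\widetilde{m})\in\varphi(C_0(V))=C_0(\psi(V))$. Therefore $C_0(\psi(U_1))=\varphi(C_0(U_1))\subset C_0(\psi(V))$, giving $\psi(U_1)\subset\psi(V)$, and combining the two inclusions yields $\psi(U_1)=\psi(V)=\psi(U_2)$. Thus $\psi$ factors through $Q$, the formula $\widetilde{\psi}(U\cap G_F)\defeq\psi(U)$ is well defined and makes the diagram commute, and it is a semigroup homomorphism because $\widetilde{\psi}(Q(U_1)Q(U_2))=\widetilde{\psi}(Q(U_1U_2))=\psi(U_1U_2)=\psi(U_1)\psi(U_2)=\widetilde{\psi}(Q(U_1))\widetilde{\psi}(Q(U_2))$ and $Q$ is surjective. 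The only genuinely nontrivial point is the inclusion $\psi(U_1)\subset\psi(V)$, which is exactly the statement that $\varphi$ annihilates elements of $C_0(U_1)$ vanishing on $U_1\cap G_F$; everything else is bookkeeping with bisections and the invariance of $F$.
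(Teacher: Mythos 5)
Your proof is correct and takes essentially the same route as the paper: both pass to the common refinement $U_1\cap U_2$ and use Lemma \ref{lem: C_0(U) is closed} to see that $\varphi|_{C_0(U_1)}$ depends only on the restriction of functions to $U_1\cap G_F$. The paper compresses your element-wise lifting argument into the single equality chain $\varphi(C_0(U_1))=\widetilde{\varphi_{U_1}}(C_0(U_1\cap U_2\cap G_F))=\varphi(C_0(U_1\cap U_2))$ (so no separate two-inclusion step is needed), and it leaves the verification that $T$ is an inverse subsemigroup, which you spell out via invariance of $F$, as a straightforward calculation.
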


\begin{proof}
	One can show that $T$ is an inverse subsemigroup of $\Bis(G_F)$ by straightforward calculations.
	It is sufficient to show that $\psi(U_1)=\psi(U_2)$ holds for $U_1,U_2\in \Bis(G)$ which satisfy $U_1\cap G_F=U_2\cap G_F$.
	It follows that $U_1\cap G_F=U_1\cap U_2\cap G_F$ from $U_1\cap G_F=U_2\cap G_F$.
	By Lemma \ref{lem: C_0(U) is closed},
	there exists an isometry $\widetilde{\varphi_{U_1}}\colon C_0(U_1\cap G_F)\to \varphi(C_0(U_1))$ that makes the following diagram commutative : 
	\begin{center}
		\begin{tikzpicture}[auto]
		\node (a) at (0,0) {$C_0(U_1)$};
		\node (c) at (3,0){$\varphi(C_0(U_1))$};
		\node (d) at (0,-2) {$C_0(U_1\cap G_F)$};
		\draw[->] (a) to node {$\varphi_{U_1}$} (c) ;
		\draw[->,swap] (a) to node {$q_{U_1}$} (d);
		\draw[->,swap] (d) to node {$\widetilde{\varphi_{U_1}}$} (c);
		\end{tikzpicture}
		,
	\end{center}
	where $q_{U_1}\colon C_0(U_1)\to C_0(U_1\cap G_F)$ denotes the surjective bounded linear map defined by the restriction.
	Now we have 
	\begin{align*}
	\varphi(C_0(U_1))&=\widetilde{\varphi_{U_1}}(C_0(U_1\cap G_F))=\widetilde{\varphi_{U_1}}(C_0(U_1\cap U_2\cap G_F))\\
	&=\widetilde{\varphi_{U_1}}(q_{U_1}(C_0(U_1\cap U_2)))=\varphi(C_0(U_1\cap U_2)).
	\end{align*}
	Thus we obtain $\psi(U_1)=\psi(U_1\cap U_2)$.
	Replacing $U_1$ with $U_2$,
	we obtain $\psi(U_2)=\psi(U_1\cap U_2)$ and therefore $\psi(U_1)=\psi(U_2)$.
	\qed
\end{proof}

Since we assume that $\varphi(C_0(G^{(0)}))\subset C_0(H^{(0)})$ is an ideal,
there exists an open set $V\subset H^{(0)}$ such that $\varphi(C_0(G^{(0)}))=C_0(V)$.
In addition, since $F$ satisfies $C_0(G^{(0)}\setminus F)=\ker \varphi \cap C_0(G^{(0)})$,
there exists  a *-homomorphism $\widetilde{\varphi_{G^{(0)}}}\colon C_0(F)\to C_0(V)$ that makes the following diagram commutative : 
\begin{center}
	\begin{tikzpicture}[auto]
	\node (a) at (0,0) {$C_0(G^{(0)})$};
	\node (c) at (3,0){$C_0(V)$};
	\node (d) at (0,-2) {$C_0(F)$};
	\draw[->] (a) to node {$\varphi_{G^{(0)}}$} (c) ;
	\draw[->,swap] (a) to node {$q_{G^{(0)}}$} (d);
	\draw[->,swap] (d) to node {$\widetilde{\varphi_{G^{(0)}}}$} (c);
	\end{tikzpicture}
	,
\end{center}
where $q_{G^{(0)}}$ denotes the *-homomorphism defined by the restriction.
One can see that $\widetilde{\varphi_{G^{(0)}}}$ is indeed an *-isomorphism.
By Gelfand-Naimark duality,
there exists a homeomorphism $\widehat{\varphi_{G^{(0)}}}\colon V\to F$ such that $\widetilde{\varphi_{G^{(0)}}}(f)(y)=f(\widehat{\varphi_{G^{(0)}}}(y))$ holds for all $f\in C_0(F)$ and $y\in V$.
Put $\sigma\defeq\widehat{\varphi_{G^{(0)}}}^{-1}\colon F\to V\subset H^{(0)}$.
We regard the range of $\sigma$ as $H^{(0)}$ rather than $V$.

Now, we obtain a semigroup homomorphism $\widetilde{\psi}\colon T\to \Bis(H)$ and a homeomorphism $\sigma\colon F\to H^{(0)}$.

\begin{lem}\label{lem sigma is an equivalent map}
	The above maps $\sigma\colon F\to H^{(0)}$ and $\widetilde{\psi}\colon T\to \Bis(H)$ satisfy the condition in Proposition \ref{proposition equivariant map induces groupoid homomorphism} for the canonical actions of bisections $T\curvearrowright F$ and $\Bis(H)\curvearrowright H^{(0)}$.
\end{lem}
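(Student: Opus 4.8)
The plan is to check the two clauses of the condition in Proposition \ref{proposition equivariant map induces groupoid homomorphism} by hand, where the source action is the canonical action $\theta$ of $T$ on $F$ and the target action is the canonical action $\theta$ of $\Bis(H)$ on $H^{(0)}$; recall that for a bisection $W$ one has $\theta_W=r|_W\circ(d|_W)^{-1}$ and $D^{\theta}_{W^*W}=d(W)$. The starting point is to identify $\widetilde{\psi}$ on idempotents. For an open set $U'\subset G^{(0)}$ the restriction $\varphi|_{C_0(G^{(0)})}$ factors as $\widetilde{\varphi_{G^{(0)}}}\circ q_{G^{(0)}}$, so $\varphi(C_0(U'))=\widetilde{\varphi_{G^{(0)}}}(C_0(U'\cap F))$; since $\widetilde{\varphi_{G^{(0)}}}$ is the $*$-isomorphism dual to $\sigma^{-1}=\widehat{\varphi_{G^{(0)}}}$, this equals $C_0(\sigma(U'\cap F))$. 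Comparing with $\varphi(C_0(U'))=C_0(\psi(U'))$ yields $\widetilde{\psi}(U'\cap G_F)=\psi(U')=\sigma(U'\cap F)$. Applying this with $U'=d(U)$ and using that for $W=U\cap G_F$ we have $W^*W=d(W)=d(U)\cap F$ gives $D^{\theta}_{\widetilde{\psi}(W^*W)}=\widetilde{\psi}(W^*W)=\sigma(d(W))$; hence $x\in d(W)$ forces $\sigma(x)\in D^{\theta}_{\widetilde{\psi}(W^*W)}$, which is the first clause. The same computation, applied with $g\in C_0(G^{(0)})$, records the pointwise duality $\varphi(g)(\sigma(z))=g(z)$ for $z\in F$, which I will use repeatedly below.

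For the equivariance clause I would argue pointwise through the evaluation map $j$. Fix $W=U\cap G_F$ and $x\in d(W)$, let $\alpha\in W$ be the unique element with $d(\alpha)=x$ (so $\theta_W(x)=r(\alpha)$), and let $\eta\in\widetilde{\psi}(W)$ be the unique element with $d(\eta)=\sigma(x)$, which exists by the first clause; then $\theta_{\widetilde{\psi}(W)}(\sigma(x))=r(\eta)$. Since $r(\eta)\in r(\widetilde{\psi}(W))=\psi(U)\psi(U)^*=\sigma(r(U)\cap F)\subset V$, we may set $w\defeq\sigma^{-1}(r(\eta))\in F$, and the goal becomes $w=r(\alpha)$. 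The key identity is that
\[
(mm^*)(w)=\lvert m(\alpha)\rvert^2 \qquad\text{for every } m\in C_0(U).
\]
To obtain it I put $n\defeq\varphi(m)\in C_0(\widetilde{\psi}(W))$ and chain the bisection evaluation relations $(f^*f)(d(\gamma))=\lvert f(\gamma)\rvert^2$ and $(ff^*)(r(\gamma))=\lvert f(\gamma)\rvert^2$ with the facts that $\varphi$ is a $*$-homomorphism and $\varphi(g)(\sigma(z))=g(z)$ on $C_0(G^{(0)})$:
\[
(mm^*)(w)=\varphi(mm^*)(\sigma(w))=(nn^*)(r(\eta))=\lvert n(\eta)\rvert^2=(n^*n)(d(\eta))=\varphi(m^*m)(\sigma(x))=(m^*m)(x)=\lvert m(\alpha)\rvert^2.
\]

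Finally I would deduce $w=r(\alpha)$ from this identity by a separation argument. If $w\neq r(\alpha)$, then, since $r|_U$ is a homeomorphism and $G^{(0)}$ is locally compact Hausdorff, I can choose an open bisection $U'\subset U$ containing $\alpha$ whose range has closure $\overline{r(U')}$ avoiding $w$; taking $m\in C_c(U')$ with $m(\alpha)\neq0$ forces $\supp^{\circ}(mm^*)\subset r(U')$, so $(mm^*)(w)=0$ while $\lvert m(\alpha)\rvert^2>0$, contradicting the identity. Hence $w=r(\alpha)$, i.e.\ $r(\eta)=\sigma(r(\alpha))$, which is exactly $\theta_{\widetilde{\psi}(W)}(\sigma(x))=\sigma(\theta_W(x))$. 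I expect the main obstacle to be establishing the displayed identity cleanly: it requires matching the domain/range evaluation formulas for normalizers supported on bisections in $G$ and in $H$ across $\varphi$, together with the explicit dual description of $\sigma$ on $C_0(G^{(0)})$; by contrast the concluding localization step is routine point-set topology.
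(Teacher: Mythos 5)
Your proposal is correct, and its overall skeleton matches the paper's: the first clause is proved exactly as in the paper, by identifying $\widetilde{\psi}$ on idempotent bisections through $\varphi(C_0(U'))=\widetilde{\varphi_{G^{(0)}}}(C_0(U'\cap F))=C_0(\sigma(U'\cap F))$ and invoking the semigroup homomorphism property. For the equivariance clause the two arguments use the same ingredients (the convolution identities for normalizers supported on bisections, $\varphi$ being a *-homomorphism, and the pointwise duality $\varphi(g)(\sigma(z))=g(z)$) but separate points differently: the paper fixes one normalizer $n$ with $n(\alpha)\neq 0$ and conjugates an \emph{arbitrary} test function $f\in C_0(G^{(0)})$, computing $n^*fn$ at $d(\alpha)$ through $\varphi$ to get $f(r(\alpha))=f(\widehat{\varphi_{G^{(0)}}}(r(\delta)))$ for all $f$, whence equality by Urysohn; you instead fix no test function but vary the normalizer $m$, proving $(mm^*)(w)=\lvert m(\alpha)\rvert^2$ and then shrinking the support of $m$ to force $w=r(\alpha)$. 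The two are dual tricks of comparable difficulty; the paper's version reaches the conclusion in one step from the chain of equalities, while yours needs the extra (routine) localization step. One small point in your favor: you explicitly verify that $r(\eta)\in r(\widetilde{\psi}(W))=\psi(r(U))=\sigma(r(U)\cap F)\subset V$, so that $w=\sigma^{-1}(r(\eta))$ is well defined; the paper applies $\widehat{\varphi_{G^{(0)}}}$ to $r(\delta)$ without recording this membership, leaving it implicit.
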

\begin{proof}
	Take $x\in F$ and $U\in \Bis(G)$ such that $x\in d(U\cap G_F)$.
	First, we show that  $\sigma(x)\in d(\widetilde{\psi}(U\cap G_F))$.
	We have
	\begin{align*}
	C_0(\sigma(d(U\cap G_F)))&=C_0(\widehat{\varphi_{G^{(0)}}}^{-1}(d(U\cap G_F))) \\
	&=\widetilde{\varphi_{G^{(0)}}}(C_0(d(U\cap G_F))) = C_0(\widetilde{\psi}(d(U\cap G_F))).
	\end{align*}
	
	Note that we use the condition that $\widehat{\varphi_{G^{(0)}}}$ is injective to deduce the second equality.
	Thus, we obtain $\sigma(d(U\cap G_F))=\widetilde{\psi}(d(U\cap G_F))$.
	Since $\widetilde{\psi}$ is a semigroup homomorphism,
	we have $\widetilde{\psi}(d(U\cap G_F))=d(\widetilde{\psi}(U\cap G_F))$.
	Therefore we obtain \[\sigma(x)\in \sigma(d(U\cap G_F))= d(\widetilde{\psi}(U\cap G_F)).\]
	
	Since we assume that $x\in d(U\cap G_F)$,
	there exists $\alpha\in U\cap G_F$ such that $d(\alpha)=x$.
	In addition, since $\sigma(x)\in d(\widetilde{\psi}(U\cap G_F))$,
	there exists $\delta\in \widetilde{\psi}(U\cap G_F)$ such that $d(\delta)=\sigma(x)$.
	In order to complete the proof,
	it is sufficient to show $\sigma(r(\alpha))=r(\delta)$.
	Instead, we show that $r(\alpha)=\widehat{\varphi_{G^{(0)}}}(r(\delta))$.
	Take $n\in C_0(U)$ such that $n(\alpha)\not=0$.
	For all $f\in C_0(G^{(0)})$,
	we have
	\begin{align*}
	\lvert n(\alpha)\rvert ^2 f(r(\alpha))&=n^*fn(d(\alpha))=q_{G^{(0)}}(n^*fn)(x)=q_{G^{(0)}}(n^*fn)(\widehat{\varphi_{G^{(0)}}}(d(\delta))) \\
	&=\widetilde{\varphi_{G^{(0)}}}(q_{G^{(0)}}(n^*fn))(d(\delta))=\varphi_{G^{(0)}}(n^*fn)(d(\delta))\\
	&=\varphi(n)^*\varphi(f)\varphi(n)(d(\delta))=\varphi(n^*n)(d(\delta))\varphi(f)(r(\delta))\\
	&= \varphi(n^*n)(\sigma(x)) \widetilde{\varphi_{G^{(0)}}}(q_{G^{(0)}}(f))(r(\delta))\\
	&=n^*n(x) q_{G^{(0)}}(f)(\widehat{\varphi_{G^{(0)}}}(r(\delta)))=\lvert n(\alpha)\rvert ^2  f(\widehat{\varphi_{G^{(0)}}}(r(\delta))).
	\end{align*}
	Hence, $f(r(\alpha))=f(\widehat{\varphi_{G^{(0)}}}(r(\delta)))$ holds for all $f\in C_0(G^{(0)})$.
	Therefore we obtain $r(\alpha)=\widehat{\varphi_{G^{(0)}}}(r(\delta))$ by Urysohn's lemma.
	\qed
\end{proof}

By Lemma \ref{lem sigma is an equivalent map} and Proposition \ref{proposition equivariant map induces groupoid homomorphism},
we obtain the groupoid homomorphism $\Phi\colon T\ltimes F\to \Bis(H)\ltimes H^{(0)}$.
Since $\Phi|_{F}\colon F\to H^{(0)}$ is a homeomorphism onto its image,
$\Phi$ is locally homeomorphic.
Since $T\ltimes F$ and $\Bis(H)\ltimes H^{(0)}$ are isomorphic to $G_F$ and $H$ respectively,
we obtain the groupoid homomorphism from $G_F$ to $H$ and denote it by $\Phi$ again.
This $\Phi$ is given explicitly as follows.
Take $\alpha\in G_F$ and $U\in \Bis(G)$ with $\alpha\in U$.
Then there exists $\alpha'\in \psi(U)$ such that $\sigma(d(\alpha))=d(\alpha')$.
This $\alpha'$ is nothing but $\Phi(\alpha)$.
In the proof of Lemma \ref{lem sigma is an equivalent map},
we obtained $\sigma(d(U\cap G_F))=\widetilde{\psi}(d(U\cap G_F))$.
In addition, we have $\widetilde{\psi}(d(U\cap G_F))=d(\widetilde{\psi}(U\cap G_F))=d(\psi(U))$.
Therefore, we have the following commutative diagram : 
\begin{center}
	\begin{tikzpicture}[auto]
	\node (a) at (0,0) {$U\cap G_F$};
	\node (c) at (3,0){$\psi(U)$};
	\node (d) at (0,-2) {$d(U\cap G_F)$};
	\node (e) at (3, -2) {$d(\psi(U))$};
	\draw[->] (a) to node {$\Phi$} (c) ;
	\draw[->,swap] (a) to node {$d$} (d);
	\draw[->,swap] (c) to node {$d$} (e);
	\draw[->] (d) to node {$\sigma$} (e);
	\end{tikzpicture}
	.
\end{center}
In particular,
$\Phi$ gives a homeomorphism from $U\cap G_F$ to $\psi(U)$ since the vertical domain maps and $\sigma$ are homeomorphisms.
Note that we have $\Phi|_{F}=\sigma=\widehat{\varphi_{G^{(0)}}}^{-1}$.

\begin{lem}\label{lem: well-definedness of the cocycle}
	Fix $\alpha\in G_F$.
	Take $U\in \Bis(G)$ and $n\in C_0(U)$ such that $\alpha \in U$ and $n(\alpha)\not=0$ hold.
	Then $\lvert\varphi(n)(\Phi(\alpha))\rvert=\lvert n(\alpha)\rvert$ holds.
	Moreover,
	the value \[ \frac{\varphi(n)(\Phi(\alpha))}{n(\alpha)}\in \T\] is independent of the choice of $U$ and $n$.
\end{lem}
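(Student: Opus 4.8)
The plan is to reduce both assertions to the single elementary observation that, for a normalizer supported on a bisection, multiplying by an adjoint recovers the pointwise modulus, and then to transport such products through $\varphi$ using the homeomorphism $\sigma$. Before starting I would record the pointwise identities on bisections: for $W\in\Bis(G)$ and $m_1,m_2\in C_0(W)$ one has $m_1^*m_2\in C_0(G^{(0)})$ with $(m_1^*m_2)(d(\beta))=\overline{m_1(\beta)}m_2(\beta)$ for every $\beta\in W$, since $d|_W$ is injective; the same holds on $\psi(U)\in\Bis(H)$. I would also recall the two transport identities already at hand: $d(\Phi(\alpha))=\sigma(d(\alpha))$ from the commutative square defining $\Phi$, and $\varphi(g)(\sigma(x))=g(x)$ for $g\in C_0(G^{(0)})$ and $x\in F$, which is immediate from $\sigma=\widehat{\varphi_{G^{(0)}}}^{-1}$ and the defining property of $\widetilde{\varphi_{G^{(0)}}}$.

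For the modulus identity I would apply this with $m_1=m_2=n$. Since $\varphi(n)\in C_0(\psi(U))$ and $\Phi(\alpha)\in\psi(U)$, evaluating $\varphi(n^*n)=\varphi(n)^*\varphi(n)$ at $d(\Phi(\alpha))$ yields $|\varphi(n)(\Phi(\alpha))|^2$. On the other hand $n^*n\in C_0(G^{(0)})$, so $\varphi(n^*n)(\sigma(d(\alpha)))=(n^*n)(d(\alpha))=|n(\alpha)|^2$, and the two base points agree because $d(\Phi(\alpha))=\sigma(d(\alpha))$. This gives $|\varphi(n)(\Phi(\alpha))|=|n(\alpha)|$.

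For independence of $n$ with $U$ fixed, take $n_1,n_2\in C_0(U)$ nonvanishing at $\alpha$ and run the same computation with $m_1=n_1$, $m_2=n_2$, obtaining
\[
\overline{\varphi(n_1)(\Phi(\alpha))}\,\varphi(n_2)(\Phi(\alpha))=\overline{n_1(\alpha)}\,n_2(\alpha).
\]
Writing $p_i=\varphi(n_i)(\Phi(\alpha))$ and $q_i=n_i(\alpha)$, the modulus identity shows $p_i/q_i\in\T$, and from $p_1\overline{p_2}=\overline{\overline{p_1}p_2}=\overline{\overline{q_1}q_2}=q_1\overline{q_2}$ one gets $(p_1/q_1)\overline{(p_2/q_2)}=1$, i.e. $p_1/q_1=p_2/q_2$.

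For independence of $U$, given $\alpha\in U_1\cap U_2$ and $n_i\in C_0(U_i)$, I would pick $n\in C_0(U_1\cap U_2)$ with $n(\alpha)\neq 0$ and, via extension by zero, regard $n$ as an element of both $C_0(U_1)$ and $C_0(U_2)$. The previous paragraph applied inside $U_1$ gives $\varphi(n_1)(\Phi(\alpha))/n_1(\alpha)=\varphi(n)(\Phi(\alpha))/n(\alpha)$, and applied inside $U_2$ gives $\varphi(n_2)(\Phi(\alpha))/n_2(\alpha)=\varphi(n)(\Phi(\alpha))/n(\alpha)$; comparing yields equality of the two ratios. The computations are routine, so the only points needing care are that the ratio $\varphi(n)(\Phi(\alpha))/n(\alpha)$ genuinely depends only on $n$ and $\alpha$ (which is why $\Phi(\alpha)$ must be intrinsic to $\alpha$, already known since $\Phi$ is a well-defined map on $G_F$) and that the embeddings $C_0(U_1\cap U_2)\hookrightarrow C_0(U_i)$ are legitimate; both are in hand, so no serious obstacle remains.
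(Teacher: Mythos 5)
Your proposal is correct, and its engine is the same as the paper's: both assertions are obtained by transporting products lying in $C_0(G^{(0)})$ through $\varphi$ via the identities $\varphi(g)(\sigma(x))=g(x)$ and $d(\Phi(\alpha))=\sigma(d(\alpha))$, together with the fact that elements supported on a common bisection multiply pointwise; the modulus computation is identical to the paper's. Where you diverge is the independence argument. The paper compares $n\in C_0(U)$ and $m\in C_0(W)$ in one stroke: since $n^*m$ need not lie in $C_0(G^{(0)})$, it picks a cutoff $f\in C_c(d(U\cap W))$ with $f(d(\alpha))=1$, so that $nf,mf\in C_c(U\cap W)$ and $(nf)^*(mf)\in C_0(G^{(0)})$, and transports $(nf)^*(mf)$; this costs the (glossed) verification that $\varphi(nf)(\Phi(\alpha))=\varphi(n)(\Phi(\alpha))$, i.e.\ that $\varphi(f)(d(\Phi(\alpha)))=1$. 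You instead settle the same-bisection case first --- where $n_1^*n_2\in C_0(G^{(0)})$ automatically and the transport identity applies verbatim --- and then reach the general case transitively, comparing $n_1$ and $n_2$ to a common element of $C_0(U_1\cap U_2)$ embedded by extension by zero into both $C_0(U_1)$ and $C_0(U_2)$. The two devices are interchangeable here: yours avoids any cutoff computation, at the price of the embeddings $C_0(U_1\cap U_2)\hookrightarrow C_0(U_i)$ (legitimate, since $C_c(U_1\cap U_2)\subset C_c(U_i)$ inside $C^*_r(G)$) and the observation --- which you correctly flag --- that the ratio $\varphi(n)(\Phi(\alpha))/n(\alpha)$ is intrinsic to $n$ and $\alpha$, because $\Phi(\alpha)$ and the evaluations via $j$ are defined independently of any chosen bisection.
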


\begin{proof}
	First, we show $\lvert\varphi(n)(\Phi(\alpha))\rvert=\lvert n(\alpha)\rvert$.
	This follows from the next calculation : 
	\begin{align*}
	\lvert\varphi(n)(\Phi(\alpha))\rvert^2&=\varphi(n^*n)(d(\Phi(\alpha)))=\varphi(n^*n)(\Phi(d(\alpha))) \\
	&=\widetilde{\varphi_{G^{(0)}}}(q_{G^{(0)}}(n^*n))(\Phi(d(\alpha)))=q_{G^{(0)}}(n^*n)(d(\alpha))=\lvert n(\alpha)\rvert^2.
	\end{align*}
		
	Next, we show that the value \[ \frac{\varphi(n)(\Phi(\alpha))}{n(\alpha)}\in \T\] is independent of the choice of $U$ and $n$.
	Consider $W\in \Bis(G)$ and $m\in C_0(W)$ such that $\alpha\in W$ and $m(\alpha)\not=0$ hold.
	Then we have
	\[
	n^*m(d(\alpha))=\overline{n(\alpha)}m(\alpha)
	\]
	and 
	\[
	\varphi(n^*m)(\Phi(d(\alpha)))=\varphi(n^*m)(d(\Phi(\alpha)))=\overline{\varphi(n)(\Phi(\alpha))}\varphi(m)(\Phi(\alpha)).
	\]
	Take $f\in C_c(d(U\cap W))$ such that $f(d(\alpha))=1$.
	Then we have $nf, mf\in C_c(U\cap W)$ and $(nf)^*(mf)\in C_0(G^{(0)})$.
	Combining with
	\begin{align*} n^*m(d(\alpha))&=(nf)^*(mf)(d(\alpha))=\varphi((nf)^*(mf))(\Phi(d(\alpha)))\\
	&=\overline{\varphi(n)(\Phi(\alpha))}\varphi(m)(\Phi(\alpha))
	\end{align*}
	and $\varphi(n)(\Phi(\alpha))/n(\alpha)\in\T$,
	we obtain 
	\[
	\frac{\varphi(m)(\Phi(\alpha))}{m(\alpha)}=\overline{\frac{n(\alpha)}{\varphi(n)(\Phi(\alpha))}}=\frac{\varphi(n)(\Phi(\alpha))}{n(\alpha)}.
	\]
	Thus, the value \[ \frac{\varphi(n)(\Phi(\alpha))}{n(\alpha)}\in \T\] is independent of the choice of $U$ and $n$.
	\qed
\end{proof}

\begin{lem}\label{lem definition of cocyle}
	Define $c\colon G_F\to \T$ by
	\[
	c(\alpha)\defeq \frac{\varphi(n)(\Phi(\alpha))}{n(\alpha)},
	\]
	where $n\in C_0(U)$ and $U\in \Bis(G)$ satisfies $\alpha\in U$ and $n(\alpha)\not=0$\footnote{Note that $c$ is well-defined by Lemma \ref{lem: well-definedness of the cocycle}.}.
	Then $c\colon G_F\to \T$ is a continuous groupoid homomorphism. 
\end{lem}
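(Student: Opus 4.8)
The plan is as follows. By Lemma \ref{lem: well-definedness of the cocycle} we already know that $c$ takes values in $\T$ and, crucially, that the defining ratio does not depend on the auxiliary data $U$ and $n$. This freedom is the engine of the argument: to establish each property of $c$ we are free to choose the bisections and functions most convenient for the task at hand. It then remains to verify that $c$ is multiplicative and continuous.

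\emph{Multiplicativity.} Fix composable $\alpha,\beta\in G_F$, so that $d(\alpha)=r(\beta)$ and hence $\alpha\beta\in G_F$. Choose $U,W\in\Bis(G)$ with $\alpha\in U$ and $\beta\in W$, together with $n\in C_0(U)$ and $m\in C_0(W)$ satisfying $n(\alpha)\neq 0$ and $m(\beta)\neq 0$. Then $UW\in\Bis(G)$, $\alpha\beta\in UW$, and $nm\in C_0(UW)$. The first key step is that convolution on a bisection picks out a single term: since $U$ and $W$ are bisections, the only factorization $\gamma\delta=\alpha\beta$ with $\gamma\in\insupp(n)\subset U$ and $\delta\in\insupp(m)\subset W$ is $(\gamma,\delta)=(\alpha,\beta)$, so that $(nm)(\alpha\beta)=n(\alpha)m(\beta)$. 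Now apply $\varphi$. As $\varphi$ is a *-homomorphism, $\varphi(nm)=\varphi(n)\varphi(m)$, and by Lemma \ref{lem definition of the psi} we have $\varphi(n)\in C_0(\psi(U))$ and $\varphi(m)\in C_0(\psi(W))$, again supported on bisections of $H$. Because $\Phi$ is a groupoid homomorphism, $\Phi(\alpha\beta)=\Phi(\alpha)\Phi(\beta)$, and the commutative diagram preceding this lemma gives $\Phi(\alpha)\in\psi(U)$ and $\Phi(\beta)\in\psi(W)$. Applying the single-term evaluation once more, this time in $H$, yields $\varphi(nm)(\Phi(\alpha\beta))=\varphi(n)(\Phi(\alpha))\,\varphi(m)(\Phi(\beta))$. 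Dividing the two identities gives
\[
c(\alpha\beta)=\frac{\varphi(nm)(\Phi(\alpha\beta))}{(nm)(\alpha\beta)}=\frac{\varphi(n)(\Phi(\alpha))}{n(\alpha)}\cdot\frac{\varphi(m)(\Phi(\beta))}{m(\beta)}=c(\alpha)c(\beta).
\]

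\emph{Continuity.} Fix $\alpha_0\in G_F$ and choose $U\in\Bis(G)$ and $n\in C_0(U)$ with $\alpha_0\in U$ and $n(\alpha_0)\neq 0$. The set $\{\alpha\in U\cap G_F\mid n(\alpha)\neq 0\}$ is an open neighborhood of $\alpha_0$ in $G_F$, and on it $c(\alpha)=\varphi(n)(\Phi(\alpha))/n(\alpha)$. Here the denominator $n$ is continuous and nonvanishing, while $\alpha\mapsto\varphi(n)(\Phi(\alpha))$ is continuous because $\varphi(n)$ defines a continuous function on $H$ through the evaluation map $j$ of Proposition \ref{prop evaluation} and $\Phi$ is continuous. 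Hence $c$ is continuous at $\alpha_0$, and since $\alpha_0$ was arbitrary, $c$ is continuous on $G_F$.

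I expect the only genuinely delicate points to be the two single-term evaluations of convolution products on bisections and the accompanying identification $\Phi(\alpha)\in\psi(U)$; both rest on the structure established earlier, namely that $\Phi$ restricts to a homeomorphism $U\cap G_F\to\psi(U)$ and that the open supports of $n$, $m$, $\varphi(n)$, $\varphi(m)$ lie in the respective bisections. Everything else follows immediately from the well-definedness afforded by Lemma \ref{lem: well-definedness of the cocycle}.
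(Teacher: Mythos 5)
Your proposal is correct and follows essentially the same route as the paper's proof: both use the well-definedness from Lemma \ref{lem: well-definedness of the cocycle} to evaluate $c(\alpha\beta)$ via the product $nm\in C_0(UW)$, with the identity $\varphi(nm)(\Phi(\alpha\beta))=\varphi(n)(\Phi(\alpha))\varphi(m)(\Phi(\beta))$ justified by the supports of $\varphi(n),\varphi(m)$ lying in bisections containing $\Phi(\alpha),\Phi(\beta)$, and both prove continuity by writing $c=(\varphi(n)\circ\Phi)/n$ on the open set where $n$ does not vanish. The only difference is that you spell out the single-term convolution evaluations and the membership $\Phi(\alpha)\in\psi(U)$ explicitly, details the paper leaves to the reader.
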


\begin{proof}
	First, we show that $c$ is continuous.
	Fix $\alpha\in G_F$.
	Take $U\in \Bis(G)$ and $n\in C_0(U)$ so that $\alpha\in U$ and $n(\alpha)\not=0$ holds.
	Then we have 
	\[
	c(\gamma)=\frac{\varphi(n)(\Phi(\gamma))}{n(\gamma)}
	\]
	for all $\gamma\in \supp^{\circ}(n)$.
	Since $\varphi(n)\circ\Phi$ and $n$ are continuous,
	$c$ is also continuous at $\alpha$.
	Since $\alpha\in G_F$ is arbitrary,
	$c$ is continuous on $G_F$.
	
	Next, we show that $c$ is a groupoid homomorphism.
	Fix $\alpha,\beta\in G_F$ with $d(\alpha)=r(\beta)$.
	Take $U_1,U_2\in \Bis(G)$, $n\in C_0(U_1)$ and $m\in C_0(U_2)$ so that $\alpha\in U_1$, $\beta\in U_2$,
	$n(\alpha)\not= 0$ and $m(\beta)\not=0$ hold.
	Then we have $nm\in C_0(UV)$ and $nm(\alpha\beta)=n(\alpha)m(\beta)\not=0$.
	Hence, we obtain
	\[
	c(\alpha\beta)=\frac{\varphi(nm)(\Phi(\alpha\beta))}{nm(\alpha\beta)}=\frac{\varphi(n)(\Phi(\alpha))\varphi(m)(\Phi(\beta))}{n(\alpha)m(\beta)}=c(\alpha)c(\beta).
	\]
	Here, the second equality follows from the fact that $\varphi(n)$ and $\varphi(m)$ are supported on bisections that contain $\Phi(\alpha)$ and $\Phi(\beta)$ respectively.
	Therefore, $c\colon G_F\to \T$ is a continuous groupoid homomorphism.
	\qed
\end{proof}
	
	Thus, we have obtained the locally homeomorphic groupoid homomorphism $\Phi\colon G_F\to H$ and the continuous cocycle $c\colon G_F\to \T$.
	From the remark beneath Lemma \ref{lem sigma is an equivalent map},
	$\Phi|_{U\cap G_F}$ is a homeomorphism onto its image for each $U\in\Bis(G)$.
	Now, we are ready to show the first half of Theorem \ref{thm summary of main theorem }.
	
	\begin{prop} \label{prop varphiU =varphic, Phi,U}
	Fix $U\in \Bis(G)$.
	Define a homeomorphism $\Phi_U\defeq \Phi|_{U\cap G_F} \colon U\cap G_F\to \Phi(U\cap G_F)$.
	In addition, define $\varphi_{\Phi,c,U}\colon C_0(U\cap G_F)\to C_0(\Phi(U\cap G_F))$ to be
	\[\varphi_{\Phi,c,U}(f)(\delta)\defeq c(\Phi_U^{-1}(\delta))f(\Phi_U^{-1}(\delta))\]
	for $f\in C_0(U\cap G_F)$ and $\delta\in \Phi(U\cap G_F)$.
	Then $\varphi_{\Phi,c,U}$ is a linear isometric isomorphism.
	Moreover, $\varphi_U=\varphi_{\Phi,c, U}\circ q_U$ and $\widetilde{\varphi_U}=\varphi_{\Phi,c, U}$ hold.
	\end{prop}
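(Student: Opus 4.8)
The plan is to verify the three assertions in order, dispatching the isometry claim as routine and concentrating the real work on the identity $\varphi_U=\varphi_{\Phi,c,U}\circ q_U$. First I would check that $\varphi_{\Phi,c,U}$ is a linear isometric isomorphism. Linearity is immediate from the defining formula. Since $\Phi_U\colon U\cap G_F\to\Phi(U\cap G_F)$ is a homeomorphism and $\lvert c(\gamma)\rvert=1$ for every $\gamma$ (as $c$ is $\T$-valued by Lemma \ref{lem definition of cocyle}), one computes
\[\sup_{\delta\in\Phi(U\cap G_F)}\lvert \varphi_{\Phi,c,U}(f)(\delta)\rvert=\sup_{\delta\in\Phi(U\cap G_F)}\lvert f(\Phi_U^{-1}(\delta))\rvert=\sup_{\gamma\in U\cap G_F}\lvert f(\gamma)\rvert,\]
and since the reduced norm agrees with the supremum norm on these bisection modules, $\varphi_{\Phi,c,U}$ is isometric. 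Surjectivity follows because $g\mapsto \overline{c}\cdot(g\circ\Phi_U)$ furnishes a two-sided inverse.

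The core of the argument is to show $\varphi(n)(\delta)=c(\Phi_U^{-1}(\delta))\,n(\Phi_U^{-1}(\delta))$ for every $n\in C_0(U)$ and $\delta\in\Phi(U\cap G_F)$. Writing $\alpha=\Phi_U^{-1}(\delta)\in U\cap G_F$, I would split according to whether $n(\alpha)$ vanishes. When $n(\alpha)\neq 0$, the definition of the cocycle in Lemma \ref{lem definition of cocyle} gives exactly $\varphi(n)(\Phi(\alpha))=c(\alpha)\,n(\alpha)$, which is the desired equality at $\delta=\Phi(\alpha)$. The case $n(\alpha)=0$ is where the main obstacle lies: then $c(\alpha)n(\alpha)=0$, so I must separately establish $\varphi(n)(\Phi(\alpha))=0$, and the quotient defining $c$ is unavailable. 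I would resolve this by noting that the computation in Lemma \ref{lem: well-definedness of the cocycle} never actually used $n(\alpha)\neq 0$: the chain
\[\lvert\varphi(n)(\Phi(\alpha))\rvert^2=\varphi(n^*n)(\Phi(d(\alpha)))=q_{G^{(0)}}(n^*n)(d(\alpha))=\lvert n(\alpha)\rvert^2\]
(using that $\varphi(n)$ is supported on the bisection $\psi(U)=\Phi(U\cap G_F)$, that $\Phi$ is a groupoid homomorphism with $\Phi|_F=\sigma$, and the intertwining of $\widetilde{\varphi_{G^{(0)}}}$ with $\sigma$) holds verbatim for every $n\in C_0(U)$. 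Hence $\lvert\varphi(n)(\Phi(\alpha))\rvert=\lvert n(\alpha)\rvert=0$, as needed. Since both $\varphi(n)$ and $\varphi_{\Phi,c,U}(q_U(n))$ vanish off $\Phi(U\cap G_F)=\psi(U)$ (the former because $\varphi(C_0(U))=C_0(\psi(U))$ by Lemma \ref{lem definition of the psi}), the two functions agree on all of $H$, which yields $\varphi_U=\varphi_{\Phi,c,U}\circ q_U$.

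Finally, $\widetilde{\varphi_U}=\varphi_{\Phi,c,U}$ is a formal consequence. Lemma \ref{lem: C_0(U) is closed} characterises $\widetilde{\varphi_U}$ by $\varphi_U=\widetilde{\varphi_U}\circ q_U$, and we have just exhibited $\varphi_{\Phi,c,U}$ satisfying the same relation; since $q_U$ is surjective, the two maps coincide. I expect the only genuine subtlety to be the vanishing argument at the zeros of $n$, which is why I would isolate and reuse the norm identity underlying Lemma \ref{lem: well-definedness of the cocycle} rather than relying solely on the quotient that defines $c$.
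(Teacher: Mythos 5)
Your proposal is correct and follows essentially the same route as the paper: the case split on whether $n$ vanishes at $\alpha=\Phi_U^{-1}(\delta)$, with the nonvanishing case handled by the defining formula for $c$ and the vanishing case by the norm identity $\lvert\varphi(n)(\Phi(\alpha))\rvert=\lvert n(\alpha)\rvert$ (which the paper recomputes inline rather than citing from Lemma \ref{lem: well-definedness of the cocycle}, but it is the same calculation), and the final identification $\widetilde{\varphi_U}=\varphi_{\Phi,c,U}$ deduced from surjectivity of $q_U$. Your observation that the norm identity never used $n(\alpha)\neq 0$ is precisely the point the paper's second case rests on.
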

	
	\begin{proof}
		It follows that $\varphi_{\Phi,c,U}$ is a linear isometric isomorphism from the direct calculation.
		Note that the inverse map of $\varphi_{\Phi,c,U}$ is given by
		\[
		\varphi_{\Phi,c,U}^{-1}(g)(\alpha)=\overline{c(\alpha)}g(\Phi_U(\alpha))
		\]
		for $g\in C_0(\Phi(U\cap G_F))$ and $\alpha\in U\cap G_F$.
		
		We show $\varphi_U=\varphi_{\Phi,c, U}\circ q_U$.
		Take $g\in C_0(U)$ and $\delta\in \Phi(U\cap G_F)$.
		Assume that $g(\Phi_U^{-1}(\delta))\not=0$.
		Then we have
		\[
		c(\Phi_U^{-1}(\delta))=\frac{\varphi(g)(\Phi(\Phi_U^{-1}(\delta)))}{g(\Phi_U^{-1}(\delta))}=\frac{\varphi(g)(\delta)}{g(\Phi_U^{-1}(\delta))}=\frac{\varphi_U(g)(\delta)}{q_U(g)(\Phi^{-1}_U(\delta))}.
		\]
		Thus, we obtain
		\[
		\varphi_U(g)(\delta)=c(\Phi_U^{-1}(\delta))q_U(g)(\Phi^{-1}_U(\delta))=\varphi_{\Phi,c, U}\circ q_U(g)(\delta).
		\]
		If $g(\Phi_U^{-1}(\delta))=0$,
		we have 
		\begin{align*}
		\lvert \varphi_U(g)(\delta)\rvert^2&= \varphi(g^*g)(d(\delta))=g^*g(\Phi^{-1}(d(\delta)))\\
		&=g^*g(d(\Phi_U^{-1}(\delta)))=\lvert g(\Phi_U^{-1}(\delta))\rvert^2=0
		\end{align*}
		and therefore
		\[
		\varphi_{\Phi,c, U}\circ q_U(g)(\delta)=c(\Phi^{-1}_U(\delta))g(\Phi^{-1}_U(\delta))=0.
		\]
		Hence, we obtain \[\varphi_U(g)(\delta)=\varphi_{\Phi,c, U}\circ q_U(g)(\delta).\]
		for all $g\in C_0(U)$ and $\delta\in\Phi(U\cap G_F)$.
		The last assertion follows from the fact that we have $\varphi_U=\widetilde{\varphi_U}\circ q_U$ and $q_U$ is surjective.
		\qed
	\end{proof}
	
	Define a *-homomorphism $\varphi_{\Phi,c}\colon C_c(G_F)\to C_c(H)$ to be
	\[
	\varphi_{\Phi,c}(f)(\delta)\defeq \sum_{\alpha\in \Phi^{-1}(\delta)}c(\alpha)f(\alpha)
	\]
	for $f\in C_c(G_F)$ and $\delta\in H$.
	Note that the right hand side of the above formula is a finite sum since $\Phi$ is locally homeomorphic and the support of $f$ is compact.
	In addition,
	we use the condition that $\Phi$ is injective on $F$ to check that $\varphi_{\Phi,c}$ preserves the multiplications.
	Thus one can check that $\varphi_{\Phi,c}\colon C_c(G_F)\to C_c(H)$ is actually a *-homomorphism.
	We shall remark that $\varphi_{\Phi,c}$ does not always extend to $C^*_r(G_F)$ if $G_F$ is not amenable. 
	We will give a relevant example in Example \ref{ex widetilde varphi dose not exists}.
	
	Finally, we complete the proof of Theorem \ref{thm summary of main theorem }.
	
	\begin{prop}\label{prop decompotision of varphi}
		Assume that there exists a *-homomorphism 
		\[\widetilde{\varphi}\colon C^*_r(G_F)\to C^*_r(H)\]
		with the following commutative diagram :
		\begin{center}
			\begin{tikzpicture}[auto]
			\node (a) at (0,0) {$C^*_r(G)$};
			\node (c) at (3,0){$C^*_r(H)$};
			\node (d) at (0,-2) {$C^*_r(G_F)$};
			\draw[->] (a) to node {$\varphi$} (c) ;
			\draw[->,swap] (a) to node {$q$} (d);
			\draw[->,swap] (d) to node {$\widetilde{\varphi}$} (c);
			\end{tikzpicture}
			,
		\end{center}
	where $q\colon C^*_r(G)\to C^*_r(G_F)$ denote the *-homomorphism induced by the restriction.
	Then $\widetilde{\varphi}(f)=\varphi_{\Phi,c}(f)$ holds for all $f\in C_c(G_F)$.
	In particular, $\varphi_{\Phi,c}$ extends to $C^*_r(G_F)$ and $\widetilde{\varphi}=\varphi_{\Phi,c}$ holds.
	\end{prop}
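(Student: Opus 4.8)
The goal is to show that if the dotted lift $\widetilde{\varphi}$ exists, then it must coincide with the explicit formula $\varphi_{\Phi,c}$ on $C_c(G_F)$. Since $q\colon C^*_r(G)\to C^*_r(G_F)$ is surjective and $q(C_c(G))=C_c(G_F)$ (the restriction map sends compactly supported functions to compactly supported functions, and every element of $C_c(G_F)$ extends to some element of $C_c(G)$), it suffices to verify the identity $\widetilde{\varphi}(q(g))=\varphi_{\Phi,c}(q(g))$ for $g$ ranging over a spanning set of $C_c(G)$. The key reduction is that $C_c(G)$ is spanned by functions supported on single open bisections: any $g\in C_c(G)$ can be written, via a partition of unity subordinate to a finite cover of its compact support by open bisections, as a finite sum $\sum_i g_i$ with $g_i\in C_c(U_i)$, $U_i\in\Bis(G)$. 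By linearity I may therefore assume $g\in C_0(U)$ for a single $U\in\Bis(G)$.

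First I would fix $U\in\Bis(G)$ and $g\in C_0(U)$ and use the commutativity of the big square together with $\varphi_U=\varphi|_{C_0(U)}$ to write $\widetilde{\varphi}(q(g))=\widetilde{\varphi}(q_U(g))$; here I am using that $q$ restricted to $C_0(U)$ agrees with $q_U$ followed by the inclusion $C_0(U\cap G_F)\hookrightarrow C^*_r(G_F)$. Commutativity of the outer diagram gives $\widetilde{\varphi}\circ q=\varphi$, so $\widetilde{\varphi}(q_U(g))=\varphi(g)=\varphi_U(g)$. By Proposition \ref{prop varphiU =varphic, Phi,U} this equals $\varphi_{\Phi,c,U}(q_U(g))$, which by the definition of $\varphi_{\Phi,c,U}$ is the function $\delta\mapsto c(\Phi_U^{-1}(\delta))q_U(g)(\Phi_U^{-1}(\delta))$ supported on $\Phi(U\cap G_F)$.

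Next I would check that this local expression agrees pointwise with the global formula $\varphi_{\Phi,c}(q(g))(\delta)=\sum_{\alpha\in\Phi^{-1}(\delta)}c(\alpha)(q(g))(\alpha)$. Since $\Phi_U$ is a homeomorphism from $U\cap G_F$ onto $\Phi(U\cap G_F)$ and $q(g)=q_U(g)$ is supported inside $U\cap G_F$, for $\delta\in\Phi(U\cap G_F)$ the fiber $\Phi^{-1}(\delta)$ meets $\supp^\circ(q(g))$ in exactly the single point $\Phi_U^{-1}(\delta)$, so the sum collapses to $c(\Phi_U^{-1}(\delta))q(g)(\Phi_U^{-1}(\delta))$; for $\delta\notin\Phi(U\cap G_F)$ both sides vanish. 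Hence $\widetilde{\varphi}(q(g))=\varphi_{\Phi,c}(q(g))$ for $g\in C_0(U)$, and by the partition-of-unity linearity argument the identity $\widetilde{\varphi}(f)=\varphi_{\Phi,c}(f)$ holds for all $f\in C_c(G_F)$.

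**The main obstacle.** The delicate point is not the pointwise computation but the bookkeeping when passing from a single bisection to a general $f\in C_c(G_F)$: one must ensure that the fiberwise sum defining $\varphi_{\Phi,c}$ genuinely decomposes as the sum of the single-bisection contributions, i.e.\ that local homeomorphy of $\Phi$ makes each fiber sum finite and additive under the partition. This is exactly where the hypothesis that $\Phi|_{U\cap G_F}$ is injective for each $U\in\Bis(G)$ (and injective on $F$) is used, and it is why $\varphi_{\Phi,c}$ was already shown to be a well-defined $*$-homomorphism on $C_c(G_F)$ before this proposition. Once the single-bisection identity and the additivity of the fiber sums are in hand, the final clause---that $\varphi_{\Phi,c}$ extends to $C^*_r(G_F)$ and equals $\widetilde{\varphi}$---is immediate: $\widetilde{\varphi}$ is bounded and agrees with $\varphi_{\Phi,c}$ on the dense subalgebra $C_c(G_F)$, so $\widetilde{\varphi}$ \emph{is} the desired bounded extension.
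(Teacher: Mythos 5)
Your proof is correct and follows essentially the same route as the paper's: the key step in both is to invoke Proposition \ref{prop varphiU =varphic, Phi,U} on a single bisection (your identity $\widetilde{\varphi}(q(g))=\varphi_{\Phi,c,U}(q_U(g))$, with the fiber sum collapsing to one term) and then extend to all of $C_c(G_F)$ by linearity, since $C_c(G_F)$ is spanned by $\bigcup_{U\in\Bis(G)}C_c(U\cap G_F)$. Your version merely makes explicit some bookkeeping the paper leaves implicit (the surjectivity of $q$ on the $C_c$ level and the pointwise collapse of the fiber sum), which is harmless.
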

	
	\begin{proof}
		Fix $U\in \Bis(G)$.
		By Proposition \ref{prop varphiU =varphic, Phi,U},
		we have $\widetilde{\varphi}(f)=\varphi_{\Phi,c}(f)$ for all $f\in C_c(U\cap G_F)$.
		Since $C_c(G_F)$ is the linear span of $\bigcup_{U\in \Bis(G)}C_c(U\cap G_F)$,
		we obtain $\widetilde{\varphi}(f)=\varphi_{\Phi,c}(f)$ for all $f\in C_c(G_F)$.
		\qed 
	\end{proof}

	Now, we have completed the proof of Theorem \ref{thm summary of main theorem }.
	In the last of this subsection,
	we give some remarks about our results in this subsection.
	First, we present a special case of Theorem \ref{thm summary of main theorem }.
	Applying Theorem \ref{thm summary of main theorem } for surjective *-homomorphisms,
	we obtain quotients of underlying \'etale groupoids (Corollary \ref{cor : surjective *-hom induces a quotient of etale groupoid}).
	We refer the readers to \cite[Section 3]{komura_2020} for the quotient \'etale groupoids.
	We remark that the quotient \'etale groupoid $G/{\Iso(G)^{\circ}}$ appearing in Corollary \ref{cor : surjective *-hom induces a quotient of etale groupoid} is nothing but the groupoid of germs associated with $G$ in the sense of \cite[Proposition 3.2]{renault}.
	
	\begin{cor}\label{cor : surjective *-hom induces a quotient of etale groupoid}
		Let $G$ be an \'etale groupoid and $H$ be an \'etale effective groupoid.
		Assume that there exists a surjective *-homomorphism $\varphi\colon C^*_r(G)\to C^*_r(H)$ such that $\varphi(C_0(G^{(0)}))=C_0(H^{(0)})$.
		Then there exists a closed invariant subset $F\subset G^{(0)}$ such that $H$ is isomorphic to $G_F/{\Iso(G_F)^{\circ}}$ as \'etale groupoids.
		Moreover, if $\varphi$ is injective on $C_0(G^{(0)})$ (and therefore $C_0(G^{(0)})$ and $C_0(H^{(0)})$ are isomorphic),
		then $H$ is isomorphic to $G/{\Iso(G)^{\circ}}$.
	\end{cor}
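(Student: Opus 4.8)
The plan is to apply Theorem~\ref{thm summary of main theorem } and then to show that the groupoid homomorphism it produces descends to an isomorphism on the quotient by the interior of the isotropy. Since $\varphi$ is surjective with $\varphi(C_0(G^{(0)}))=C_0(H^{(0)})$, the image $\varphi(C_0(G^{(0)}))$ is in particular an ideal of $C_0(H^{(0)})$, so the hypotheses of Theorem~\ref{thm summary of main theorem } hold. This yields a closed invariant set $F\subset G^{(0)}$, a locally homeomorphic groupoid homomorphism $\Phi\colon G_F\to H$ which is a homeomorphism onto its image on each $U\cap G_F$ with $U\in\Bis(G)$, and a cocycle $c$ that plays no role here. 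Because $\varphi(C_0(G^{(0)}))=C_0(H^{(0)})$, the open set $V$ occurring in the construction is all of $H^{(0)}$, so $\Phi|_F=\sigma\colon F\to H^{(0)}$ is a homeomorphism onto the \emph{whole} unit space.

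First I would check that $\Phi$ is surjective. Its image equals $\bigcup_{U\in\Bis(G)}\Phi(U\cap G_F)=\bigcup_U\psi(U)$, a union of open bisections, hence open, and every $\varphi(C_0(U))=C_0(\psi(U))$ consists of functions supported in $\Phi(G_F)$. If some $\delta\in H$ were omitted, then choosing $g\in C_c(H)$ with $g(\delta)\neq0$ and writing $g=\varphi(a)=\lim_n\varphi(a_n)$ with $a_n\in C_c(G)=\Span\bigcup_U C_c(U)$, the evaluation map $j$ of Proposition~\ref{prop evaluation} (norm decreasing and the identity on $C_c$) would give $g(\delta)=\lim_n j(\varphi(a_n))(\delta)=0$, a contradiction. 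Hence $\Phi(G_F)=H$.

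The heart of the argument is the identification of the fibers of $\Phi$ with germ classes: for $\alpha,\beta\in G_F$ one has $\Phi(\alpha)=\Phi(\beta)$ if and only if $\alpha^{-1}\beta\in\Iso(G_F)^{\circ}$. For the backward direction I would take an open bisection $O$ with $\alpha^{-1}\beta\in O\subset\Iso(G_F)$; since $d=r$ on $O$ and $\Phi$ intertwines $d,r$ with $\sigma$, the set $\Phi(O)$ is an open subset of $H$ contained in $\Iso(H)$, hence in $\Iso(H)^{\circ}=H^{(0)}$ by effectiveness of $H$, so $\Phi(\alpha^{-1}\beta)=\Phi(\alpha)^{-1}\Phi(\beta)$ is a unit and $\Phi(\alpha)=\Phi(\beta)$. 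For the forward direction, pick $U,W\in\Bis(G)$ with $\alpha\in U$, $\beta\in W$; injectivity of $\sigma$ forces $d(\alpha)=d(\beta)$ and $r(\alpha)=r(\beta)$, so $\alpha^{-1}\beta\in\Iso(G_F)$. Setting $N:=\psi(U)\cap\psi(W)$, an open bisection containing $\delta:=\Phi(\alpha)$, and $U':=(\Phi|_{U\cap G_F})^{-1}(N)$, $W':=(\Phi|_{W\cap G_F})^{-1}(N)$, I would show $U'^{-1}W'\subset\Iso(G_F)$: any $\gamma\in U'$, $\gamma'\in W'$ with $r(\gamma)=r(\gamma')$ satisfy $\Phi(\gamma),\Phi(\gamma')\in N$ with equal range, so $\Phi(\gamma)=\Phi(\gamma')$ since $N$ is a bisection, whence $d(\gamma)=d(\gamma')$ by injectivity of $\sigma$ and $\gamma^{-1}\gamma'\in\Iso(G_F)$. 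As $U'^{-1}W'$ is open and contains $\alpha^{-1}\beta$, we conclude $\alpha^{-1}\beta\in\Iso(G_F)^{\circ}$. I expect this forward direction to be the main obstacle, since it is where the bisection structure of the images and the injectivity of $\Phi|_F$ must be combined.

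These two directions show that $\Phi$ factors through the quotient map $\pi\colon G_F\to G_F/\Iso(G_F)^{\circ}$ and induces a \emph{bijective} groupoid homomorphism $\overline{\Phi}$ onto $H$. Since $\Phi$ is open (being locally homeomorphic) and surjective and $\pi$ is an open continuous surjection, $\overline{\Phi}$ is both continuous and open, hence a homeomorphism; combined with the groupoid structure this makes $\overline{\Phi}$ an isomorphism of \'etale groupoids $G_F/\Iso(G_F)^{\circ}\cong H$, with Hausdorffness of the quotient inherited from $H$ through $\overline{\Phi}$. Finally, for the \emph{moreover} statement, injectivity of $\varphi$ on $C_0(G^{(0)})$ gives $\ker\varphi\cap C_0(G^{(0)})=0$, so $C_0(G^{(0)}\setminus F)=0$, forcing $F=G^{(0)}$ and $G_F=G$, which yields $H\cong G/\Iso(G)^{\circ}$.
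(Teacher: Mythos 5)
Your proposal is correct and takes essentially the same route as the paper: apply Theorem~\ref{thm summary of main theorem }, show that $\Phi\colon G_F\to H$ is surjective, identify the fibres of $\Phi$ with cosets of $\Iso(G_F)^{\circ}$ (equivalently $\Phi^{-1}(H^{(0)})=\Iso(G_F)^{\circ}$, which is how the paper phrases it), and pass to the quotient. The only difference is one of detail: the paper compresses these steps into brief assertions plus a citation of the fundamental theorem on homomorphisms for \'etale groupoids, whereas you prove them explicitly --- your evaluation-map argument for surjectivity and the bisection argument giving $\alpha^{-1}\beta\in\Iso(G_F)^{\circ}$ are correct fillings of exactly the gaps the paper leaves to references.
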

	
	\begin{proof}
		Applying Theorem \ref{thm summary of main theorem },
		we obtain a closed invariant $F\subset G^{(0)}$ and a locally homeomorphic groupoid homomorphism $\Phi\colon G_F\to H$.
		Since we assume that $\varphi$ is surjective,
		it turns out that $\Phi\colon G_F\to H$ is also surjective.
		In addition,
		one can see $\Phi^{-1}(H^{(0)})=\Iso(G_F)^{\circ}$ since $H$ is effective.
		Therefore,
		$\Phi$ induces an isomorphism $G_F/\Iso(G_F)^{\circ}\simeq H$ by the fundamental theorem on homomorphisms \cite[Proposition 2.2]{komura_2022}.
		Now, the last assertion is obvious since $F$ coincides with $G^{(0)}$ if $\varphi$ is injective on $C_0(G^{(0)})$.
		\qed
	\end{proof}
		
		Combining Corollary \ref{cor : surjective *-hom induces a quotient of etale groupoid} and groupoid quotients,
		we obtain the following rigidity result (Corollary \ref{cor generalized rigidity result}) for not necessarily effective groupoids.
		
	\begin{cor}\label{cor generalized rigidity result}
		Let $G$ and $H$ be \'etale (not necessarily effective) groupoids and $Q\colon H\to H/{\Iso(H)^{\circ}}$ be the quotient map.
		Assume that there exists a surjective *-homomorphism $\varphi\colon C^*_r(G)\to C^*_r(H)$ such that $\varphi$ give an isomorphism between $C_0(G^{(0)})$ and $C_0(H^{(0)})$.
		In addition,
		assume that $\Iso(H)^{\circ}\subset H$ is closed and the *-homomorphism
		\[
		\varphi_{Q}\colon C_c(H)\ni f \mapsto \left(\delta \mapsto \sum_{\alpha\in Q^{-1}(\delta)} f(\alpha) \right)\in C_c(H/{\Iso(H)^{\circ}})
		\]
		extends to the *-homomorphism from $C^*_r(H)$ to $C^*_r(H/{\Iso(H)^{\circ}})$\footnote{This assumption holds if $H$ is amenable. See \cite[Definition 10.1.2]{asims} for the definition of amenable groupoids.}.
		Then $G/{\Iso(G)^{\circ}}$ is isomorphic to $H/{\Iso(H)^{\circ}}$.		
	\end{cor}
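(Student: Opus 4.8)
The plan is to reduce the statement to Corollary \ref{cor : surjective *-hom induces a quotient of etale groupoid} by composing $\varphi$ with $\varphi_Q$. The key structural observation is that $H/{\Iso(H)^{\circ}}$ is an \emph{effective} \'etale groupoid: it is the groupoid of germs associated with $H$ (see \cite[Proposition 3.2]{renault}), and the hypothesis that $\Iso(H)^{\circ}$ is closed guarantees it is a locally compact Hausdorff \'etale groupoid, so that Corollary \ref{cor : surjective *-hom induces a quotient of etale groupoid} is applicable to it.

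First I would record two elementary properties of $\varphi_Q$. Since $Q\colon H\to H/{\Iso(H)^{\circ}}$ is a local homeomorphism, for any open bisection $W\subset H/{\Iso(H)^{\circ}}$ one may choose an open bisection $\widetilde{W}\subset H$ for which $Q|_{\widetilde{W}}$ is a homeomorphism onto $W$; pulling back functions in $C_c(W)$ along $Q|_{\widetilde{W}}$ and extending them by zero yields preimages under $\varphi_Q$, whence $\varphi_Q(C_c(H))\supset C_c(H/{\Iso(H)^{\circ}})$ and, since the image of a *-homomorphism between C*-algebras is closed, $\varphi_Q$ is surjective. Moreover, because $Q$ restricts to a homeomorphism $H^{(0)}\to (H/{\Iso(H)^{\circ}})^{(0)}$ and the defining sum collapses to a single term on functions supported in $H^{(0)}$, the map $\varphi_Q$ restricts to a *-isomorphism $C_0(H^{(0)})\to C_0((H/{\Iso(H)^{\circ}})^{(0)})$.

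Then I would form the composite $\varphi_Q\circ\varphi\colon C^*_r(G)\to C^*_r(H/{\Iso(H)^{\circ}})$. This is a surjective *-homomorphism, being a composition of surjections, and combining the assumption that $\varphi$ gives an isomorphism $C_0(G^{(0)})\to C_0(H^{(0)})$ with the previous paragraph shows that $\varphi_Q\circ\varphi$ restricts to a *-isomorphism $C_0(G^{(0)})\to C_0((H/{\Iso(H)^{\circ}})^{(0)})$; in particular it is injective on $C_0(G^{(0)})$ and carries it onto the unit-space subalgebra of the effective target. I would then apply the ``moreover'' part of Corollary \ref{cor : surjective *-hom induces a quotient of etale groupoid}, with $H/{\Iso(H)^{\circ}}$ in the role of the effective groupoid, to conclude $G/{\Iso(G)^{\circ}}\cong H/{\Iso(H)^{\circ}}$.

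Beyond this bookkeeping the argument is essentially formal, so the main point to be careful about is that every hypothesis of Corollary \ref{cor : surjective *-hom induces a quotient of etale groupoid} is genuinely verified for the composite --- specifically the surjectivity of $\varphi_Q$ and the effectiveness of the quotient --- which is precisely why I would establish the two properties of $\varphi_Q$ before composing. I do not expect a serious obstacle here: the assumed extendability of $\varphi_Q$ to $C^*_r(H/{\Iso(H)^{\circ}})$ is exactly what makes the composition well defined, and all remaining verifications are routine.
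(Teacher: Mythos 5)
Your proposal follows the paper's proof in all essentials: the paper likewise forms the composite $\varphi_Q\circ\varphi$, records that $H/{\Iso(H)^{\circ}}$ is Hausdorff (because $\Iso(H)^{\circ}$ is closed) and effective, that the extended $\varphi_Q$ is surjective and restricts to an isomorphism $C_0(H^{(0)})\to C_0((H/{\Iso(H)^{\circ}})^{(0)})$, and then applies Corollary \ref{cor : surjective *-hom induces a quotient of etale groupoid} to the composite. The only difference is that the paper obtains the properties of the quotient and of $\varphi_Q$ by citation (\cite[Proposition 3.11, Proposition 3.13 and Lemma 3.14]{komura_2020}), whereas you verify them by hand.

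One caveat about your hand verification of surjectivity: the claim that \emph{every} open bisection $W\subset H/{\Iso(H)^{\circ}}$ admits a global lift, i.e.\ an open bisection $\widetilde{W}\subset H$ with $Q|_{\widetilde{W}}$ a homeomorphism onto $W$, is false in general. The preimage $Q^{-1}(W)\to W$ is fiberwise a torsor over the isotropy bundle $\Iso(H)^{\circ}$, and it can be a nontrivial covering with no continuous section; for example, there is a Hausdorff, amenable \'etale groupoid $H$ with $\Iso(H)^{\circ}=S^1\times\mathbb{Z}$ clopen and $H/{\Iso(H)^{\circ}}\cong \mathbb{Z}\ltimes_{\theta}S^1$ (irrational rotation groupoid) in which the preimage of the bisection $\{1\}\times S^1$ is the universal cover $\mathbb{R}\to S^1$, so no global lift exists. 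This does not sink your argument, because it suffices to lift locally: since $Q$ is a surjective local homeomorphism, every point of $H/{\Iso(H)^{\circ}}$ has an open neighborhood over which $Q$ admits a continuous section, and by a partition of unity every element of $C_c(H/{\Iso(H)^{\circ}})$ is a finite sum of functions supported in such small, liftable bisections; your pull-back argument applies to each summand, and surjectivity then follows, as you say, from the closedness of the range of a *-homomorphism. With this local repair the proof is complete and coincides with the paper's.
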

	
	\begin{proof}
		First, $H/\Iso(H)^{\circ}$ is Hausdorff since we assume that $\Iso(H)^{\circ}\subset H$ is closed (\cite[Proposition 3.11]{komura_2020}).
		We denote the extension of $\varphi_{Q}$ by $\varphi_{Q}\colon C^*_r(H)\to C^*_r(H/\Iso(H)^{\circ})$ again.
		Then $\varphi_{Q}$ is surjective and the restriction $\varphi_{Q}|_{C_0(H^{(0)})}$ is an isomorphism onto $C_0((H/\Iso(H)^{\circ})^{(0)})$ by \cite[Proposition 3.13 and Lemma 3.14]{komura_2020}.
		In addition,
		note that $H/\Iso(H)^{\circ}$ is effective.
		Therefore we may apply Corollary \ref{cor : surjective *-hom induces a quotient of etale groupoid} for $\varphi_{Q}\circ \varphi \colon C^*_r(G)\to C^*_r(H/\Iso(H)^{\circ})$ and this yields an isomorphism $G/\Iso(G)^{\circ}\simeq H/\Iso(H)^{\circ}$.
		\qed
	\end{proof}

	\begin{rem}
		We remark that the converse of Corollary \ref{cor generalized rigidity result} does not hold.
		For example,
		let $G\defeq \{e\}$ be the trivial group and $H\defeq \Z$.
		Then both of $G/\Iso(G)^{\circ}$ and $H/\Iso(H)^{\circ}$ are trivial groups although there does not exist a surjective *-homomorphism from $C^*_r(G)=\C$ to $C^*_r(H)=C(\T)$.
		
		From Corollary \ref{cor generalized rigidity result},
		we deduce that the quotient groupoids $G/\Iso(G)^{\circ}$ define an invariant for inclusions of C*-algebras $(C^*_r(G), C_0(G^{(0)}))$ associated with \'etale amenable groupoids $G$ such that $\Iso(G)^{\circ}\subset G$ are closed.
		Namely, for \'etale amenable groupoids $G$ and $H$ such that $\Iso(G)^{\circ}\subset G$ and $\Iso(H)^{\circ}\subset H$ are closed,
		$G/\Iso(G)^{\circ}$ and $H/\Iso(H)^{\circ}$ are isomorphic if there exists a *-isomorphism $\varphi\colon C^*_r(G)\to C^*_r(H)$ such that $\varphi(C_0(G^{(0)}))=C_0(H^{(0)})$.
		In particular,
		since the orbit spaces of $G$ and $G/\Iso(G)^{\circ}$ are homeomorphic,
		the orbit spaces $G^{(0)}/G$ also define an invariant for inclusions of C*-algebras $(C^*_r(G), C_0(G^{(0)}))$ associated with \'etale amenable groupoids $G$ with closed $\Iso(G)^{\circ}$.
		Note that $G^{(0)}/G$ is the quotient space of $G^{(0)}$ with respect to the equivalence relation defined by declaring $x\sim y$ if there exists $\alpha\in G$ such that $d(\alpha)=x$ and $r(\alpha)=y$.
	\end{rem}

	Next, we investigate the condition that $\widetilde{\varphi}$ in Proposition \ref{prop decompotision of varphi} exists.
	Recall that an \'etale groupoid $G$ is said to be inner exact if the following sequence
	\[
	\begin{tikzcd}
	0\arrow{r} & C^*_r(G_{G^{(0)}\setminus F})\arrow{r}{\iota} & C^*_r(G)\arrow{r}{q} & C^*_r(G_F)\arrow{r}& 0 
	\end{tikzcd}
	\]
	is exact for each closed invariant subset $F\subset G^{(0)}$,
	where $\iota$ is the inclusion map and $q$ is the restriction.
	If $G$ is inner exact,
	then $\widetilde{\varphi}$ in Proposition \ref{prop decompotision of varphi} exists.
	In particular,
	$\widetilde{\varphi}$ always exists for all amenable groupoids, since amenable groupoids are inner exact (see \cite[Definition 10.1.2, Theorem 10.1.4 and Proposition 10.3.2]{asims} for this fact and the amenability of \'etale groupoids).
	In addition,
	if $\varphi$ is injective on $C_0(G^{(0)})$,
	then $\widetilde{\varphi}$ exists and $\widetilde{\varphi}$ is nothing but $\varphi$ since $F=G^{(0)}$ holds.
	In particular, no matter whether $G$ is amenable or not, every *-automorphisms on $C^*_r(G)$ that preserves $C_0(G^{(0)})$ comes from groupoid automorphisms and continuous cocycles on $G$ (Corollary \ref{cor: AutG is a semidirect product}).
	
	On the other hand,
	Example \ref{ex widetilde varphi dose not exists} is an example such that $\widetilde{\varphi}$ does not exist,
	although the \'etale groupoid $G$ in Example \ref{ex widetilde varphi dose not exists} is not effective.
	To our knowledge, it is an open problem whether $\widetilde{\varphi}$ always exists or not under the assumption that $G$ is effective.
	
	\begin{ex}[HLS groupoid] \label{ex widetilde varphi dose not exists}
		We give an example where $\widetilde{\varphi}$ dose not exist.

		Let $G$ be the HLS groupoid associated with the free group $\Gamma\defeq F_2$ (see \cite{Willett} for details) and $H\defeq\{e\}$ be the trivial group.
		Then we have $C^*_r(H)=\C$.
		Recall that
		\[
		G=\coprod_{n\in\N\cup \{\infty\}}\Gamma_n\times\{n\},
		\]
		where $\Gamma_n$ is a finite quotient group of $\Gamma$ for $n\in\N$ and $\Gamma_{\infty}=\Gamma$ (see \cite{Willett} for the precise definition).
		Define a *-homomorphism $\varphi\colon C_c(G)\to \C$ by
		\[
		\varphi(f)\defeq \sum_{s\in\Gamma}f((s,\infty))
		\]
		for $f\in C_c(G)$.
		Then $\varphi$ extends to the *-homomorphism $\varphi\colon C^*_r(G)\to \C$ since $C^*_r(G)$ coincides with the universal groupoid C*-algebra $C^*(G)$ by \cite[Theorem 1.2]{Willett}.
		Note that the corresponding closed invariant set is $F=\{\infty\}$.
		Since $F_2$ is not amenable,
		$\widetilde{\varphi}\colon C^*_r(G_F)\to \C$ dose not exist by \cite[Theorem 2.6.8]{brown}.
	\end{ex}

	\begin{ex}
		From a *-homomorphism $\varphi\colon C^*_r(G)\to C^*_r(H)$ such that $\varphi(C_0(G^{(0)}))\subset C_0(H^{(0)})$ is an ideal,
		we have constructed a closed invariant subset $F\subset G^{(0)}$,
		a locally homeomorphic groupoid homomorphisms $\Phi\colon G_F\to H$ such that $\Phi|_{F}$ is injective and a continuous cocycle $c\colon G_F\to \T$.
		If $G$ is amenable,
		this construction is bijective.
		Indeed, for such $F\subset G^{(0)}$, $\Phi\colon G_F\to H$ and $c\colon G_F\to \T$,
		the *-homomorphism $\varphi_{\Phi,c}\colon C_c(G_F)\to C^*_r(H)$ in Proposition \ref{prop decompotision of varphi} extends to $C^*_r(G_F)$ by \cite[Theorem 9.2.3 and Theorem 10.1.4]{asims}.
		Thus, we obtain a *-homomorphism $\varphi\colon C^*_r(G)\to C^*_r(H)$ which induces $F, \Phi$ and $c$.
		If $G$ is not amenable, these constructions need not to be bijective.
		We give such an example.
		
		Let $\Gamma$ be a countably infinite discrete group and $X\defeq \Gamma\cup \{\infty\}$ be the one point compactification of $\Gamma$.
		The left multiplication $\Gamma\curvearrowright \Gamma$ extends to the action on $X$ and we denote it by $\sigma\colon \Gamma\curvearrowright X$.
		Putting $G\defeq \Gamma\ltimes_{\sigma} X$, then $G$ is an effective \'etale groupoid.
		In addition, $F\defeq \{\infty\}$ is a closed invariant subset of $G$.
		Let $H\defeq \{e\}$ be the trivial group,
		$\Phi\colon G_{F}\to H$ be the unique group homomorphism and $c=1$.
		If $\Gamma$ is not amenable,
		then there dose not exist a *-homomorphism $\varphi\colon C^*_r(G)\to C^*_r(H)$ that induces $F$, $\Phi$ and $c$.
		Actually, there dose not exist a nonzero *-homomorphism $\varphi\colon C^*_r(G)\to C^*_r(H)$ if $\Gamma$ is not amenable.
		If such $\varphi\colon C^*_r(G)\to C^*_r(H)$ exists,
		then one obtain a nonzero *-homomorphism $C^*_r(\Gamma)\to \C$ by composing $\varphi$ with the canonical inclusion $C^*_r(\Gamma)\to C(X)\rtimes \Gamma=C^*_r(G)$.
		This contradicts to the non-amenability of $\Gamma$ (see \cite[Theorem 2.6.8]{brown}, for example).
 	\end{ex}

\subsection{Automorphism groups of $C^*_r(G)$ that globally preserve $C_0(G^{(0)})$}\label{subsection automorphism groups of C*r(G)}

In this subsection,
we investigate automorphism groups of $C^*_r(G)$ that globally preserve $C_0(G^{(0)})$.

\begin{defi}
	Let $D\subset A$ be an inclusion of C*-algebras.
	We define $\Aut_D(A)$ and $\FAut_D(A)$ to be
	\begin{align*}
	\Aut_D(A)&\defeq \{\varphi\in \Aut(A)\mid \varphi(D)=D\}, \\
	\FAut_D(A)&\defeq \{\varphi\in \Aut(A)\mid \text{$\varphi(a)=a$ for all $a\in D$}\}.
	\end{align*}
\end{defi}
Note that $\Aut_D(A)$ and $\FAut_D(A)$ are subgroups of $\Aut(A)$ and we have $\FAut_D(A)\subset \Aut_D(A)$.

For an \'etale groupoid $G$,
recall that we define
\[
Z(G,\T)\defeq \{c\colon G\to \T\mid \text{$c$ is a continuous groupoid homomorphism.}\}.
\]
Then $Z(G,\T)$ is an abelian group with respect to the pointwise product.
In addition, $\Aut(G)$ naturally acts on $Z(G,\T)$.
Indeed, for $\Phi\in \Aut(G)$ and $c\in Z(G,\T)$,
$\Phi. c\in Z(G,\T)$ is defined by
\[
\Phi. c(\alpha)\defeq c(\Phi^{-1}(\alpha))
\]
for $\alpha\in G$.
In the next corollary,
we consider the semidirect product $\Aut(G)\ltimes Z(G,\T)$ with respect to this action.
Recall that the product of $\Aut(G)\ltimes Z(G,\T)$ is 
\[(\Phi_1, c_1)\cdot (\Phi_2, c_2)=(\Phi_1\circ\Phi_2, (\Phi_2^{-1}.c_1) \cdot c_2)\]
for $(\Phi_1, c_1), (\Phi_2, c_2)\in \Aut(G)\ltimes Z(G,\T)$.

\begin{cor}[{\cite[Proposition 5.7]{Matui2011}}]\label{cor: AutG is a semidirect product}
	Let $G$ be an effective \'etale groupoid.
	For $c\in Z(G,\T)$ and $\Phi\in \Aut(G)$,
	define $\varphi_{\Phi,c}\in\Aut(C^*_r(G))$ by
	\[
	\varphi_{\Phi,c}(f)(\alpha)\defeq c(\Phi^{-1}(\alpha))f(\Phi^{-1}(\alpha))
	\]
	for $f\in C_c(G)$ and $\alpha\in G$.
	Then $\varphi_{\Phi,c}\in\Aut_{C_0(G^{(0)})}(C^*_r(G))$ holds.
	Moreover,
	the map
	\[\Psi\colon \Aut(G)\ltimes Z(G,\T)\ni (\Phi,c)\mapsto \varphi_{\Phi,c}\in \Aut_{C_0(G^{(0)})}(C^*_r(G))\]
	is an isomorphism as groups.
\end{cor}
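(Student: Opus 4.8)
The plan is to verify the four assertions in turn: that each $\varphi_{\Phi,c}$ is a well-defined element of $\Aut_{C_0(G^{(0)})}(C^*_r(G))$, that $\Psi$ is a group homomorphism, that $\Psi$ is injective, and that $\Psi$ is surjective. The first three are essentially direct computations, while the surjectivity is where Theorem \ref{thm summary of main theorem} and its Corollary do the real work.

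First I would check well-definedness. For a fixed $(\Phi,c)\in\Aut(G)\ltimes Z(G,\T)$ the map $\varphi_{\Phi,c}$ factors on $C_c(G)$ as the pullback $f\mapsto f\circ\Phi^{-1}$ induced by the groupoid automorphism $\Phi$, followed by pointwise multiplication by the cocycle $\Phi.c$. A short direct computation using $c(\alpha)c(\beta)=c(\alpha\beta)$ and $c(\gamma^{-1})=\overline{c(\gamma)}$ shows that both maps are $*$-homomorphisms of $C_c(G)$, hence so is $\varphi_{\Phi,c}$. To see that it extends isometrically to $C^*_r(G)$, I would note that the pullback intertwines the left regular representations via the unitary relabelling $\delta_\alpha\mapsto\delta_{\Phi(\alpha)}$ (here $\Phi$ maps each $G_x$ bijectively onto $G_{\Phi(x)}$), while the cocycle twist is implemented fibrewise by the diagonal unitaries $U_x\colon\delta_\alpha\mapsto c(\alpha)\delta_\alpha$ on $\ell^2(G_x)$; hence $\lVert\varphi_{\Phi,c}(f)\rVert_r=\lVert f\rVert_r$. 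Finally, since $c\equiv 1$ on $G^{(0)}$ (because $c(x)=c(x)^2$ for units $x$) and $\Phi(G^{(0)})=G^{(0)}$, the formula restricts on $C_0(G^{(0)})$ to $f\mapsto f\circ(\Phi|_{G^{(0)}})^{-1}$, so indeed $\varphi_{\Phi,c}\in\Aut_{C_0(G^{(0)})}(C^*_r(G))$.

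Next, the homomorphism property is a direct computation on $C_c(G)$: evaluating $\varphi_{\Phi_1,c_1}\circ\varphi_{\Phi_2,c_2}$ at a point $\alpha$ produces the scalar $c_1(\Phi_1^{-1}\alpha)\,c_2(\Phi_2^{-1}\Phi_1^{-1}\alpha)$ times $f\big((\Phi_1\circ\Phi_2)^{-1}\alpha\big)$, and one checks that this scalar equals $\big((\Phi_2^{-1}.c_1)\cdot c_2\big)\big((\Phi_1\circ\Phi_2)^{-1}\alpha\big)$, which matches the stated semidirect-product law. For injectivity I would suppose $\varphi_{\Phi,c}=\id$; then $c(\Phi^{-1}\alpha)f(\Phi^{-1}\alpha)=f(\alpha)$ for all $f\in C_c(G)$ and $\alpha\in G$. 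Choosing $f$ with $f(\alpha)\neq 0$ and support in an arbitrarily small neighbourhood of $\alpha$ forces $\Phi^{-1}\alpha=\alpha$, so $\Phi=\id$; then $c\cdot f=f$ for all $f$ forces $c\equiv 1$.

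The surjectivity is the heart of the argument, and here I would invoke Theorem \ref{thm summary of main theorem} with $H\defeq G$, which is effective by hypothesis. Given $\varphi\in\Aut_{C_0(G^{(0)})}(C^*_r(G))$, the hypotheses hold because $\varphi(C_0(G^{(0)}))=C_0(G^{(0)})$ is trivially an ideal of itself. Since $\varphi$ is injective on $C_0(G^{(0)})$ we have $\ker\varphi\cap C_0(G^{(0)})=0$, so the associated closed invariant set is $F=G^{(0)}$ and $G_F=G$; moreover $\widetilde{\varphi}=\varphi$ exists, as noted after Proposition \ref{prop decompotision of varphi} for $\varphi$ injective on $C_0(G^{(0)})$. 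Thus the theorem yields a locally homeomorphic groupoid homomorphism $\Phi\colon G\to G$ and a cocycle $c\in Z(G,\T)$ with $\varphi(f)(\delta)=\sum_{\alpha\in\Phi^{-1}(\{\delta\})}c(\alpha)f(\alpha)$. The main obstacle is to upgrade $\Phi$ from a locally homeomorphic homomorphism to an automorphism of $G$. For this I would apply Corollary \ref{cor : surjective *-hom induces a quotient of etale groupoid} to the surjection $\varphi$, which gives that $\Phi$ induces an isomorphism $G/\Iso(G)^{\circ}\cong G$; since $G$ is effective, $\Iso(G)^{\circ}=G^{(0)}$, so the quotient map is the identity and $\Phi$ itself is a groupoid automorphism. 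With $\Phi$ bijective the fibre sum collapses to the single term $c(\Phi^{-1}\delta)f(\Phi^{-1}\delta)$, i.e.\ $\varphi=\varphi_{\Phi,c}=\Psi(\Phi,c)$, proving surjectivity. The only genuinely delicate point is reconciling the convention of the main theorem's formula with the definition of $\varphi_{\Phi,c}$ in the statement, which is precisely this collapse of the sum once bijectivity of $\Phi$ has been established.
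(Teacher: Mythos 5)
Your proposal is correct and follows essentially the same route as the paper's proof: the same direct computation for the homomorphism property, a support argument for injectivity in place of the paper's Urysohn argument, and surjectivity obtained from Theorem \ref{thm summary of main theorem } and Proposition \ref{prop decompotision of varphi} with $F=G^{(0)}$ and $\widetilde{\varphi}=\varphi$. You are in fact somewhat more thorough than the paper, which does not spell out the verification that $\varphi_{\Phi,c}$ is a well-defined element of $\Aut_{C_0(G^{(0)})}(C^*_r(G))$ and leaves implicit the step upgrading the locally homeomorphic $\Phi$ to a groupoid automorphism, which you make explicit via Corollary \ref{cor : surjective *-hom induces a quotient of etale groupoid} and effectiveness of $G$.
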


\begin{rem}
	The above result is almost same as \cite[Proposition 5.7 (1)]{Matui2011}.
	In the proof of \cite[Proposition 5.7]{Matui2011},
	Renault's theory about Cartan C*-subalgebras \cite{renault} is used.
	In this paper, we obtained the proof of \ref{cor: AutG is a semidirect product} in a slightly more direct way without Renault's theory.	
\end{rem}

\begin{proof}[\sc{Proof of Corollary \ref{cor: AutG is a semidirect product}}]
	We show that $\Psi$ is a group homomorphism.
	Take $(\Phi_1, c_1), (\Phi_2, c_2)\in \Aut(G)\ltimes Z(G,\T)$.
	For $f\in C_c(G)$ and $\gamma\in G$,
	we have
	\begin{align*}
	\varphi_{\Phi_1, c_1}&\circ \varphi_{\Phi_2,c_2}(f)(\gamma)=\varphi_{\Phi_1, c_1}(\varphi_{\Phi_2,c_2}(f))(\gamma) \\
	&=c_1(\Phi_1^{-1}(\gamma))\varphi_{\Phi_2,c_2}(f)(\Phi_1^{-1}(\gamma)) \\
	&=c_1(\Phi_1^{-1}(\gamma))c_2(\Phi_2^{-1}(\Phi_1^{-1}(\gamma)))f(\Phi_2^{-1}(\Phi_1^{-1}(\gamma))) \\
	&=(\Phi_2^{-1}.c_1)((\Phi_1\circ\Phi_2)^{-1}(\gamma))c_2((\Phi_1\circ\Phi_2)^{-1}(\gamma))f((\Phi_1\circ\Phi_2)^{-1}(\gamma)) \\
	&=\varphi_{\Phi_1\circ\Phi_2, (\Phi_2^{-1}.c)\cdot c_2}(f)(\gamma)=\varphi_{(\Phi_1, c_1)(\Phi_2, c_2)}(f)(\gamma).
	\end{align*}
	Thus $\Psi$ is a group homomorphism.
	By Proposition \ref{prop decompotision of varphi},
	$\Psi$ is surjective.
	To show that $\Psi$ is injective,
	take $(\Phi_1, c_1), (\Phi_2, c_2)\in \Aut(G)\ltimes Z(G,\T)$ and assume $\varphi_{\Phi_1, c_1}=\varphi_{\Phi_2,c_2}$.
	If $\Phi_1\not=\Phi_2$,
	then $\Phi_1^{-1}(\gamma)\not=\Phi_2^{-1}(\gamma)$ holds for some $\gamma\in G$.
	By Urysohn's lemma, there exists $f\in C_c(G)$ such that $f(\Phi_1^{-1}(\gamma))\not=0$ and $f(\Phi_2^{-1}(\gamma))=0$ hold.
	It follows that $\varphi_{\Phi_1,c_1}(f)(\gamma)\not=0$ and $\varphi_{\Phi_2, c_2}(f)(\gamma)=0$,
	which contradicts to $\varphi_{\Phi_1, c_1}=\varphi_{\Phi_2,c_2}$.
	Hence we obtain $\Phi_1=\Phi_2$.
	To show $c_1=c_2$,
	take $\gamma\in G$ and $f\in C_c(G)$ so that $f(\Phi_1^{-1}(\gamma))=1$.
	Then we have
	\[
	c_1(\gamma)=\varphi_{\Phi_1,c_1}(f)(\gamma)=\varphi_{\Phi_1,c_2}(f)(\gamma)=c_2(\gamma).
	\]
	Thus we obtain $c_1=c_2$.
	Therefore we have shown that $\Psi$ is injective.
	In conclusion, $\Psi$ is a group isomorphism.
	\qed
\end{proof}

Finally, we investigate $\FAut_{C_0(G^{(0)})}(C^*_r(G))$.

\begin{prop} \label{prop: if groupoid automorphism is identity on the unit space, then it is identity}
	Let $G$ be a topologically principal \'etale groupoid and $\Phi\colon G\to G$ be a continuous groupoid automorphism.
	Assume that $\Phi(x)=x$ holds for all $x\in G^{(0)}$.
	Then $\Phi=\id$.
\end{prop}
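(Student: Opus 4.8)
The plan is to exploit that $\Phi$ is a groupoid homomorphism fixing $G^{(0)}$ pointwise in order to pin $\Phi$ down on the dense set of points with trivial isotropy, and then to extend to all of $G$ by a density-plus-Hausdorff argument. First I would record the effect of $\Phi$ on the domain and range maps. Since $\Phi$ is a groupoid homomorphism, it sends idempotents to idempotents, hence units to units, and satisfies $d(\Phi(\alpha))=\Phi(d(\alpha))$ and $r(\Phi(\alpha))=\Phi(r(\alpha))$ (from $\alpha=\alpha\,d(\alpha)=r(\alpha)\,\alpha$ and applying $\Phi$). Combined with the hypothesis $\Phi|_{G^{(0)}}=\id$, this gives $d(\Phi(\alpha))=d(\alpha)$ and $r(\Phi(\alpha))=r(\alpha)$ for every $\alpha\in G$; in particular $\Phi$ moves each arrow to an arrow with the same source and target.

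Next I would set $U_0\defeq\{x\in G^{(0)}\mid G_x\cap G^x=\{x\}\}$, which is dense in $G^{(0)}$ by topological principality. Fix $\alpha\in G$ with $d(\alpha)\in U_0$ and consider the element $\alpha^{-1}\Phi(\alpha)$. By the previous step $r(\Phi(\alpha))=r(\alpha)=d(\alpha^{-1})$, so the product is composable, and a direct check gives $d(\alpha^{-1}\Phi(\alpha))=d(\Phi(\alpha))=d(\alpha)$ and $r(\alpha^{-1}\Phi(\alpha))=r(\alpha^{-1})=d(\alpha)$. Hence $\alpha^{-1}\Phi(\alpha)\in G_{d(\alpha)}\cap G^{d(\alpha)}=\{d(\alpha)\}$, because $d(\alpha)\in U_0$. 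Therefore $\alpha^{-1}\Phi(\alpha)=d(\alpha)$, which yields $\Phi(\alpha)=\alpha\,d(\alpha)=\alpha$. This shows $\Phi$ agrees with $\id$ on the set $d^{-1}(U_0)$.

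Finally I would upgrade agreement on $d^{-1}(U_0)$ to agreement everywhere. Since $d$ is a local homeomorphism it is an open map, and the preimage of a dense set under an open map is dense (any nonempty open $V\subset G$ has $d(V)$ open and nonempty, so $d(V)\cap U_0\neq\emptyset$, whence $V\cap d^{-1}(U_0)\neq\emptyset$); thus $d^{-1}(U_0)$ is dense in $G$. As $\Phi$ and $\id$ are continuous and $G$ is Hausdorff, the coincidence set $\{\alpha\in G\mid \Phi(\alpha)=\alpha\}$ is closed, so containing the dense set $d^{-1}(U_0)$ forces it to be all of $G$, giving $\Phi=\id$. The only genuinely delicate points are the isotropy computation in the second paragraph and the observation that passing from dense agreement to full agreement relies on the standing Hausdorff assumption on $G$ together with the openness of $d$; everything else is routine.
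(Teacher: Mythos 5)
Your proposal is correct and follows essentially the same route as the paper's proof: identify the dense set of units with trivial isotropy, show that $\alpha^{-1}\Phi(\alpha)$ (the paper uses $\Phi(\alpha)^{-1}\alpha$) lies in the isotropy group of $d(\alpha)$ and hence equals $d(\alpha)$, then conclude by density of $d^{-1}(U_0)$ via openness of $d$. The only difference is that you spell out the final density-plus-Hausdorff step (closedness of the coincidence set) which the paper leaves implicit.
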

	\begin{proof}
		Put $A=\{x\in G^{(0)}\mid d^{-1}(x)\cap r^{-1}(x)=\{x\}\}$.
		Then $A\subset G^{(0)}$ is dense.
		Since $d\colon G\to G^{(0)}$ is an open map,
		$d^{-1}(A)\subset G$ is dense.
		We show that $\Phi(\alpha)=\alpha$ holds for all $\alpha\in d^{-1}(A)$.
		Note that we have
		\[d(\Phi(\alpha))=\Phi(d(\alpha))=d(\alpha)\]
		and $r(\Phi(\alpha))=r(\alpha)$.
		Thus $(\Phi(\alpha)^{-1},\alpha)$ is a composable pair and we have $\Phi(\alpha)^{-1}\alpha\in G_{d(\alpha)}\cap G^{d(\alpha)}$.
		Since $d(\alpha)\in A$,
		we obtain $\Phi(\alpha)=\alpha$.
		Since $d^{-1}(A)\subset A$ is dense,
		it follows that $\Phi(\alpha)=\alpha$ holds for all $\alpha\in G$.
		\qed
	\end{proof}
		
	\begin{cor}
			Let $G$ be a topologically principal \'etale groupoid and $\Phi_1, \Phi_2\colon G\to G$ be continuous groupoid automorphisms.
			If $\Phi_1|_{G^{(0)}}=\Phi_2|_{G^{(0)}}$,
			then $\Phi_1=\Phi_2$ holds.
	\end{cor}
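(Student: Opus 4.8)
The plan is to reduce this statement directly to the preceding Proposition \ref{prop: if groupoid automorphism is identity on the unit space, then it is identity}, which asserts that a continuous groupoid automorphism of a topologically principal \'etale groupoid is the identity as soon as it fixes every unit. The natural device is to form the composite $\Phi\defeq \Phi_2^{-1}\circ \Phi_1\colon G\to G$ and show that $\Phi$ satisfies the hypotheses of that proposition, whence $\Phi=\id$ and therefore $\Phi_1=\Phi_2$.

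Concretely, I would first check that $\Phi$ is again a continuous groupoid automorphism. Since $\Phi_1$ and $\Phi_2$ are continuous groupoid automorphisms, each is in particular a homeomorphism of $G$ onto itself, so $\Phi_2^{-1}$ is continuous and $\Phi=\Phi_2^{-1}\circ\Phi_1$ is a composite of continuous groupoid homomorphisms, hence a continuous groupoid automorphism. Next I would verify that $\Phi$ fixes the unit space pointwise. Recall that any groupoid automorphism carries $G^{(0)}$ to $G^{(0)}$, so $\Phi_1|_{G^{(0)}}$ and $\Phi_2|_{G^{(0)}}$ are self-homeomorphisms of $G^{(0)}$. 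For $x\in G^{(0)}$ the hypothesis $\Phi_1|_{G^{(0)}}=\Phi_2|_{G^{(0)}}$ gives $\Phi_1(x)=\Phi_2(x)$, and applying $\Phi_2^{-1}$ yields
\[
\Phi(x)=\Phi_2^{-1}(\Phi_1(x))=\Phi_2^{-1}(\Phi_2(x))=x .
\]
Thus $\Phi(x)=x$ for all $x\in G^{(0)}$.

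With these two observations in hand, Proposition \ref{prop: if groupoid automorphism is identity on the unit space, then it is identity} applies verbatim (here $G$ is assumed topologically principal), and we conclude $\Phi=\id$. Composing with $\Phi_2$ gives $\Phi_1=\Phi_2\circ\Phi=\Phi_2$, as desired. I do not expect any genuine obstacle here: the statement is a formal corollary, and the only point requiring a word of justification is that $\Phi_2^{-1}$ is continuous, which is automatic because a groupoid automorphism is in particular a homeomorphism. The essential content was already carried out in the proof of the previous proposition via the density of $d^{-1}(A)$ and the topological principality of $G$.
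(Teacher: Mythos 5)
Your proof is correct and follows essentially the same route as the paper: the paper likewise applies Proposition \ref{prop: if groupoid automorphism is identity on the unit space, then it is identity} to the composite of one automorphism with the inverse of the other (the paper uses $\Phi_1\circ\Phi_2^{-1}$ rather than your $\Phi_2^{-1}\circ\Phi_1$, an immaterial difference). Your additional verifications that the composite is a continuous automorphism fixing $G^{(0)}$ pointwise are exactly the routine checks the paper leaves implicit.
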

	\begin{proof}
		Since $\Phi_1\circ \Phi_2^{-1}(x)=x$ holds for all $x\in G^{(0)}$,
		we obtain $\Phi_1\circ \Phi_2^{-1}=\id$ by Proposition \ref{prop: if groupoid automorphism is identity on the unit space, then it is identity}.
		Hence we obtain $\Phi_1=\Phi_2$.
		\qed
	\end{proof}

	\begin{cor}\label{cor Z(G,T) is isomorphic to FAUT}
		Let $G$ be a topologically principal \'etale groupoid.
		Fix $(\Phi, c)\in \Aut(G)\ltimes Z(G,\T)$.
		Then $\Phi=\id_G$ holds if and only if $\varphi_{\Phi,c}\in \FAut_{C_0(G^{(0)})}(C^*_r(G))$ holds.
		In particular,
		the restriction of $\Psi$ in Corollary \ref{cor: AutG is a semidirect product} gives a group isomorphism
		\[
		\Psi|_{Z(G,\T)}\colon Z(G,\T)\to \FAut_{C_0(G^{(0)})}(C^*_r(G))
		\]
		and $\FAut_{C_0(G^{(0)})}(C^*_r(G))$ is an abelian group.
	\end{cor}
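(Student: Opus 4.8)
The plan is to reduce everything to the structure theorem $\Psi$ from Corollary \ref{cor: AutG is a semidirect product} together with Proposition \ref{prop: if groupoid automorphism is identity on the unit space, then it is identity}. The crux is the displayed equivalence between $\Phi=\id_G$ and $\varphi_{\Phi,c}$ fixing $C_0(G^{(0)})$ pointwise; once that is in hand, the ``in particular'' clauses follow formally from the fact that $\Psi$ is a group isomorphism.

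First I would analyze how $\varphi_{\Phi,c}$ acts on $C_0(G^{(0)})$. For $a\in C_c(G^{(0)})$ and $x\in G^{(0)}$, the defining formula gives $\varphi_{\Phi,c}(a)(x)=c(\Phi^{-1}(x))\,a(\Phi^{-1}(x))$, and since a groupoid automorphism preserves the unit space we have $\Phi^{-1}(x)\in G^{(0)}$. The key observation is that $c$ is trivial on units: for any $y\in G^{(0)}$ we have $yy=y$, whence $c(y)=c(y)^2$ in $\T$ and therefore $c(y)=1$. Thus $\varphi_{\Phi,c}(a)(x)=a(\Phi^{-1}(x))$ for all $x\in G^{(0)}$, and $\varphi_{\Phi,c}$ restricts to the identity on $C_0(G^{(0)})$ exactly when $a(\Phi^{-1}(x))=a(x)$ for all such $a$ and $x$, that is, exactly when $\Phi|_{G^{(0)}}=\id$.

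Next, since $G$ is topologically principal, Proposition \ref{prop: if groupoid automorphism is identity on the unit space, then it is identity} upgrades $\Phi|_{G^{(0)}}=\id$ to $\Phi=\id_G$, while the reverse implication is trivial. This establishes the equivalence $\Phi=\id_G \iff \varphi_{\Phi,c}\in\FAut_{C_0(G^{(0)})}(C^*_r(G))$.

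Finally, for the isomorphism statement I would view $Z(G,\T)$ as the subgroup $\{(\id_G,c)\mid c\in Z(G,\T)\}$ of $\Aut(G)\ltimes Z(G,\T)$, on which the semidirect product law restricts to the pointwise product since $\id_G$ acts trivially. As $G$ is topologically principal it is effective, so Corollary \ref{cor: AutG is a semidirect product} applies and $\Psi$ is a group isomorphism onto $\Aut_{C_0(G^{(0)})}(C^*_r(G))$. The equivalence above shows $\Psi$ carries this subgroup into $\FAut_{C_0(G^{(0)})}(C^*_r(G))$; conversely, any $\varphi\in\FAut_{C_0(G^{(0)})}(C^*_r(G))\subset\Aut_{C_0(G^{(0)})}(C^*_r(G))$ equals $\varphi_{\Phi,c}$ for some $(\Phi,c)$ by surjectivity of $\Psi$, and the equivalence forces $\Phi=\id_G$, so $\varphi\in\Psi(Z(G,\T))$. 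Hence $\Psi|_{Z(G,\T)}$ is a bijective group homomorphism onto $\FAut_{C_0(G^{(0)})}(C^*_r(G))$, and abelianness of the target is immediate because $Z(G,\T)$ is abelian under the pointwise product. I do not expect a genuine obstacle; the only point needing care is the triviality of $c$ on units, which is precisely what makes the restriction to $C_0(G^{(0)})$ collapse to precomposition with $\Phi^{-1}$.
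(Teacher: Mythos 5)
Your proof is correct and follows essentially the same route as the paper: verify the equivalence $\Phi=\id_G \iff \varphi_{\Phi,c}\in\FAut_{C_0(G^{(0)})}(C^*_r(G))$ by testing on $C_0(G^{(0)})$ and invoking Proposition \ref{prop: if groupoid automorphism is identity on the unit space, then it is identity}, then transport everything through the isomorphism $\Psi$ of Corollary \ref{cor: AutG is a semidirect product}. Your explicit observation that a cocycle is trivial on units (so that $\varphi_{\Phi,c}$ restricted to $C_0(G^{(0)})$ is just precomposition with $\Phi^{-1}$) is used only implicitly in the paper's proof, so making it explicit is a welcome clarification rather than a deviation.
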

	
	\begin{proof}
		It is clear that $\varphi_{\Phi,c}\in \FAut_{C_0(G^{(0)})}(C^*_r(G))$ holds if $\Phi=\id_{G}$.
		Assume $\varphi_{\Phi,c}\in \FAut_{C_0(G^{(0)})}(C^*_r(G))$.
		Then we have
		\[
		f(\Phi^{-1}(x))=\varphi_{\Phi,c}(f)(x)=f(x)
		\]
		for all $f\in C_0(G^{(0)})$ and $x\in G^{(0)}$.
		Hence, we obtain $\Phi^{-1}|_{G^{(0)}}=\id_{G^{(0)}}$ and therefore $\Phi|_{G^{(0)}}=\id_{G^{(0)}}$.
		By Proposition \ref{prop: if groupoid automorphism is identity on the unit space, then it is identity},
		we obtain $\Phi=\id_{G}$.
		Now, the last assertion is clear.
		\qed
		\end{proof}
	
	For a topological group $H$,
	we denote the abelianization of $H$ by $H^{\ab}$.
	Recall that $H^{\ab}$ is the quotient group of $H$ by the closure of commutator subgroup $\overline{[H,H]}$.
	In addition, we denote the quotient map by $\pi\colon H\to H^{\ab}$ in the next corollary.
	
	\begin{cor}\label{cor action with large fixed point algebra factors abelianization}
		Let $G$ be a topologically principal \'etale groupoid,
		$H$ be a topological group and $\sigma\colon H\curvearrowright C^*_r(G)$ be an action such that
		\[
		\ker\sigma\defeq \{s\in H\mid \alpha_s=\id_{C^*_r(G)}\}
		\]
		is closed in $H$.
		Assume that the fixed point algebra
		\[
		C^*_r(G)^{\sigma}\defeq \bigcap_{s\in H}\{x\in C^*_r(G)\mid \sigma_s(x)=x \}
		\]
		contains $C_0(G^{(0)})$.
		Then there exists an action $\widetilde{\sigma}\colon H^{\ab}\curvearrowright C^*_r(G)$ such that
		$\sigma_s=\widetilde{\sigma}_{\pi(s)}$ holds for all $s\in H$.
		
	\end{cor}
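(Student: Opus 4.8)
The plan is to recognize that, under the hypothesis $C_0(G^{(0)})\subseteq C^*_r(G)^{\sigma}$, the action $\sigma$ actually takes values in the subgroup $\FAut_{C_0(G^{(0)})}(C^*_r(G))$, which is abelian by Corollary \ref{cor Z(G,T) is isomorphic to FAUT}. A group homomorphism into an abelian group kills commutators, so the whole content reduces to a bookkeeping argument about the quotient defining $H^{\ab}$, with the closedness of $\ker\sigma$ serving to pass from the commutator subgroup to its closure.

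First I would observe that for each $s\in H$ and each $a\in C_0(G^{(0)})$ we have $\sigma_s(a)=a$, simply because $a\in C^*_r(G)^{\sigma}$ by assumption. Hence $\sigma_s\in\FAut_{C_0(G^{(0)})}(C^*_r(G))$ for every $s$, and so $\sigma$ may be regarded as a continuous group homomorphism from $H$ into $\FAut_{C_0(G^{(0)})}(C^*_r(G))$.

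Next, since $G$ is topologically principal, Corollary \ref{cor Z(G,T) is isomorphic to FAUT} tells us that $\FAut_{C_0(G^{(0)})}(C^*_r(G))$ is an abelian group (isomorphic to $Z(G,\T)$). Because $\sigma$ is a homomorphism into an abelian group, its kernel contains the commutator subgroup $[H,H]$. The assumption that $\ker\sigma$ is closed in $H$ then yields $\overline{[H,H]}\subseteq\ker\sigma$, since $\ker\sigma$ is a closed subgroup of $H$ containing $[H,H]$.

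Therefore $\sigma$ vanishes on $\overline{[H,H]}$, and by the universal property of the quotient map $\pi\colon H\to H^{\ab}=H/\overline{[H,H]}$ there is a unique group homomorphism $\widetilde{\sigma}\colon H^{\ab}\to\Aut(C^*_r(G))$ with $\widetilde{\sigma}\circ\pi=\sigma$, that is, $\sigma_s=\widetilde{\sigma}_{\pi(s)}$ for all $s\in H$; continuity of $\widetilde{\sigma}$ is automatic since $\sigma$ is continuous and $\pi$ is a quotient map. The only subtle point — and the main obstacle to a purely algebraic argument — is the passage from $[H,H]$ to its closure $\overline{[H,H]}$, which is precisely where the closedness of $\ker\sigma$ is needed; without that hypothesis one would only be able to factor $\sigma$ through the algebraic abelianization $H/[H,H]$.
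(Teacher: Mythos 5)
Your proposal is correct and follows essentially the same route as the paper's proof: observe $\sigma_s\in\FAut_{C_0(G^{(0)})}(C^*_r(G))$, invoke Corollary \ref{cor Z(G,T) is isomorphic to FAUT} for abelianness, deduce $[H,H]\subseteq\ker\sigma$ and then $\overline{[H,H]}\subseteq\ker\sigma$ from closedness, and factor through $H^{\ab}$. One minor caution: the statement does not assume $\sigma$ is continuous, so your parenthetical claim that $\sigma$ (hence $\widetilde{\sigma}$) is continuous is unwarranted, but this is harmless since the factorization argument is purely algebraic.
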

	\begin{proof}
		Since we assume $C_0(G^{(0)})\subset C^*_r(G)^{\sigma}$,
		we have \[\sigma_s\in \FAut_{C_0(G^{(0)})}(C^*_r(G))\] for all $s\in H$.
		Hence, it follows $[H, H]\subset \ker\sigma$ since $\FAut_{C_0(G^{(0)})}(C^*_r(G))$ is an abelian group by Corollary \ref{cor Z(G,T) is isomorphic to FAUT}.
		Since we assume $\ker\sigma$ is closed,
		we obtain $\overline{[H,H]}\subset \ker\sigma$.
		Now, the existence of $\widetilde{\sigma}$ follows from the fundamental group theory.
		\qed
	\end{proof}
	The assumption that $\ker\sigma\subset H$ is closed in Corollary \ref{cor action with large fixed point algebra factors abelianization} holds if we assume some continuity of $\sigma$.
	For example,
	if $\sigma$ is a strongly continuous action,
	then $\ker\sigma\subset H$ is closed.

\bibliographystyle{plain}
\bibliography{bunken}

\end{document}